\documentclass[10pt]{article}
\usepackage{amsmath, amsthm, amssymb, amsfonts, mathrsfs, mathtools}
\usepackage{graphicx}
\usepackage{makeidx}
\usepackage[left=2cm,right=2cm,top=2cm,bottom=2cm]{geometry}
\usepackage{caption}
\usepackage{subcaption}
\usepackage[section]{placeins}
\usepackage{enumitem}
\usepackage{tikz}
\usepackage{stmaryrd}
\usepackage{makecell}
\usepackage{hhline}

\usetikzlibrary{decorations.pathreplacing}
\tikzstyle{vertex}=[auto=left,circle,draw=black,fill=white, inner sep=1.5]

\usepackage{hyperref}
\hypersetup{colorlinks=true, citecolor={blue}, linkcolor=blue, filecolor=magenta, urlcolor=cyan}

\newtheorem{theorem}{Theorem}[section]

\newtheorem{lema}[theorem]{Lemma}

\newtheorem{ex}{Example}[section]

\title{Laplacian Pair State Transfer on Total Graphs}

\author{ Akash Kalita and Bikash Bhattacharjya\\
Department of Mathematics\\
Indian Institute of Technology Guwahati, India\\
akash.kalita@iitg.ac.in, b.bikash@iitg.ac.in }

\date{}
\begin{document}
\maketitle

\vspace{-0.3in}

\begin{center}{\textbf{Abstract}}\end{center}
The total graph of a graph $G$, denoted $\mathcal{T}(G)$, is defined as the graph whose vertex set is the union of the vertex set of $G$ and the edge set of $G$, such that two vertices of $\mathcal{T}(G)$ are adjacent if the corresponding elements of $G$ are either adjacent or incident. In this paper, we investigate the existence of Laplacian perfect pair state transfer and Laplacian pretty good pair state transfer on $\mathcal{T}(G)$, where $G$ is an $r$-regular graph. We prove that if $G$ is Laplacian integral, $r \geq 3$, and $r+1$ is not a Laplacian eigenvalue of $G$, then $\mathcal{T}(G)$ does not exhibit Laplacian perfect pair state transfer. In addition, we prove that under some mild conditions, $\mathcal{T}(G)$ exhibits Laplacian pretty good pair state transfer, where $r \geq 3$ and $r+1$ is not a Laplacian eigenvalue of $G$. Using these conditions, we obtain several infinite families of total graphs exhibiting Laplacian pretty good pair state transfer that fail to exhibit Laplacian perfect pair state transfer. We also prove that the total graph of the complete graph $K_n$ exhibits Pair-LPGST if and only if $n=3$.

\noindent 
\textbf{Keywords:} Total graph, Laplacian perfect state transfer, Laplacian pretty good state transfer, Laplacian perfect pair state transfer, Laplacian pretty good pair state transfer.  
\\
\textbf{Mathematics Subject Classifications:} 15A16, 05C50, 81P68  
\section{Introduction}
Let $G$ be a finite simple connected undirected graph. Let $A_G$ and $L_G$ be the adjacency matrix and the  Laplacian matrix of $G$, respectively. Also, let $t$ be a real number and $\mathbf{i} := \sqrt{-1}$. The \textit{transition matrix} of $G$ with respect to $A_G$, denoted $H_G(t)$, is defined as 
\begin{align*}
H_G(t) :=  \exp(-\mathbf{i}t A_G) = \sum_{j = 0}^{\infty} \frac{(-\mathbf{i}t A_G)^j}{j!}. 
\end{align*}
The transition matrix of $G$ with respect to $L_G$, denoted $U_G(t)$, is defined as 
\begin{align*}
U_G(t) :=  \exp(-\mathbf{i}t L_G) = \sum_{j = 0}^{\infty} \frac{(-\mathbf{i}t L_G)^j}{j!}. 
\end{align*}
Let $n$ be the number of vertices of the graph $G$ and $a$ be a vertex of $G$. Let $\textbf{e}^n_a$ denote the column matrix indexed by the vertices of $G$ such that the $a$-th entry of $\textbf{e}^n_a$ is 1, and 0 elsewhere. A graph $G$ exhibits \textit{perfect state transfer} (in short PST) between distinct vertices $a$ and $b$ if there exists a non-zero real number $t$ such that 
$$\left|{\textbf{e}^n_{a}}^TH_G(t){\textbf{e}^n_{b}}\right| = 1.$$ 
A graph $G$ exhibits \textit{pretty good state transfer} (in short PGST) between distinct vertices $a$ and $b$ if for every positive real number $\varepsilon$, there exists a non-zero real number $t$ such that 
$$\left|\left|{{\textbf{e}^n_{a}}}^TH_G(t){\textbf{e}^n_{b}} \right| - 1\right|< \varepsilon.$$
In the definition of PST (PGST), if we replace $H_G(t)$ by $U_G(t)$, then PST (PGST) is called LPST (LPGST). If $a$ and $b$ are two distinct vertices of $G$, then the \textit{pair state} associated with $(a, b)$ is defined by the column matrix 
\begin{align*}
\frac{1}{\sqrt{2}}(\textbf{e}^n_{a} - \textbf{e}^n_{b}).
\end{align*}
For convenience, we exclude the scalar $\frac{1}{\sqrt{2}}$ and use $\textbf{e}^n_{a} - \textbf{e}^n_{b}$ to represent a pair state. A graph $G$ exhibits \textit{Laplacian perfect pair state transfer} (in short Pair-LPST) between two linearly independent  pair states ${\textbf{e}^n_{a}} - {\textbf{e}^n_{b}}$ and ${\textbf{e}^n_{c}} - {\textbf{e}^n_{d}}$ if there exists a non-zero real number $t$ such that 
$$\left|\frac{1}{2}{({\textbf{e}^n_{a}} - {\textbf{e}^n_{b}})}^TU_G(t)({\textbf{e}^n_{c}} - {\textbf{e}^n_{d}})\right| = 1.$$ 
In particular, if $a,b$ are adjacent and $c,d$ are adjacent in $G$, then Laplacian perfect pair state transfer is termed as \textit{Laplacian perfect edge state transfer}  (in short Edge-LPST). 
In $2020$, Chen and Godsil~\cite{QChen2020} introduced the notion of Pair-LPST, which is an analogue of LPST. However, Pair-LPST does not always exist on graphs. In $2023$, Wang et al.~\cite{Vertexcoronapairstatetransfer2023} introduced the notion of Laplacian pretty good pair state transfer, a relaxation of Laplacian perfect pair state transfer. A graph $G$ exhibits \textit{Laplacian pretty good pair state transfer} (in short Pair-LPGST) between two linearly independent  pair states ${\textbf{e}^n_{a}} - {\textbf{e}^n_{b}}$ and ${\textbf{e}^n_{c}} - {\textbf{e}^n_{d}}$ if for every positive real number $\varepsilon$, there exists a non-zero real number $t$ such that 
$$\left|\left|\frac{1}{2}{({\textbf{e}^n_{a}} - {\textbf{e}^n_{b}})}^TU_G(t)({\textbf{e}^n_{c}} - {\textbf{e}^n_{d}})\right| - 1\right|< \varepsilon.$$ 
In particular, if $a,b$ are adjacent and $c,d$ are adjacent in $G$, then Laplacian pretty good pair state transfer is termed as \textit{Laplacian pretty good edge state transfer} (in short Edge-LPGST). 

Chen and Godsil~\cite{QChen2020} provided a theoretical introduction to Pair-LPST and provided methods to construct infinite families of graphs exhibiting Pair-LPST. They also classified the paths and cycles exhibiting Pair-LPST. Cao~\cite{Xiwang2021}, Cao and Wan~\cite{XiwangCao2022} and Luo et al.~\cite{GaojunLuo2022} studied Edge-LPST on cubelike graphs, abelian Cayley graphs and normal Cayley graphs over dihedral groups, respectively. 
In ~\cite{Vertexcoronapairstatetransfer2023}, Wang et al. studied Pair-LPST and Pair-LPGST on vertex coronas. In~\cite{PairStateTransferQGraph2025}, Jiang et al. studied Pair-LPST and Pair-LPGST on $\mathcal{Q}$-graphs of regular graphs. Recently, Jiang et al.~\cite{DoubleCover2026} explored Pair-LPST on tensor product and double cover of graphs. Wang et al.~\cite{Laplacianprettygoodedgestatetransferinpaths2022} provided necessary and sufficient conditions for the existence of Edge-LPGST on graphs. Using those conditions, they classified the paths exhibiting Edge-LPGST.  

Cvetkovic et al.~\cite{AnIntroductiontotheTheoryofGraphSpectra2010} introduced the notion of the total graph of a graph. The \textit{total graph} of a graph $G$, denoted $\mathcal{T}(G)$, is defined as the graph whose vertex set is the union of the vertex set of $G$ and the edge set of $G$, such that two vertices of $\mathcal{T}(G)$ are adjacent if the corresponding elements of $G$ are adjacent or incident. Liu and Wang~\cite{TotalGraph2021} studied LPST and LPGST on $\mathcal{T}(G)$, where $G$ is an $r$-regular graph. They proved that $\mathcal{T}(G)$ does not exhibit LPST if $r+1$ is not a Laplacian eigenvalue of $G$. If $r+1$ is not a Laplacian eigenvalue of $G$, then Liu and Wang~\cite{TotalGraph2021} provided some sufficient conditions for the existence of LPGST on $\mathcal{T}(G)$. Using those sufficient conditions, they obtain an infinite family of $\mathcal{T}(G)$ exhibiting LPGST that fail to exhibit LPST. Further, they asked the question: if $G$ is $r$-regular and $r+1$ is a Laplacian eigenvalue of $G$, can $\mathcal{T}(G)$ exhibit LPST? In $2023$, Liu and Liu~\cite{TotalGraph2023} answered the preceding question and proved that $\mathcal{T}(G)$ does not exhibit LPST for any regular graph $G$. Inspired by the preceding work, we consider the existence of Pair-LPST and Pair-LPGST on $\mathcal{T}(G)$, where $G$ is an $r$-regular graph.

The subsequent sections of this paper are structured as follows. In Section~\ref{preliminary}, we introduce some basic definitions and preliminary results that we use in the rest of the sections. In Section~\ref{perfect}, we explore the existence of Pair-LPST on $\mathcal{T}(G)$, where $G$ is an $r$-regular graph such that $r \geq 3$. We prove that $\mathcal{T}(G)$ does not exhibit Pair-LPST if $G$ is Laplacian integral and $r+1$ is not a Laplacian eigenvalue of $G$. In Section~\ref{pretty good}, we explore the existence of Pair-LPGST on $\mathcal{T}(G)$, where $G$ is an $r$-regular graph such that $r \geq 3$. We prove that $\mathcal{T}(G)$ exhibits Pair-LPGST under some mild conditions. In Section~\ref{Concrete examples}, we obtain several infinite families of $\mathcal{T}(G)$ exhibiting Pair-LPGST using the conditions obtain in Section~\ref{pretty good}. Moreover, we use Cartesian product and tensor product of graphs to obtain a few more infinite families of $\mathcal{T}(G)$ exhibiting Pair-LPGST. Finally, we prove that $\mathcal{T}(K_n)$ exhibits Pair-LPGST if and only if $n=3$. 

\section{Basic Definitions and Preliminary Results}\label{preliminary}
In this section, we discuss some notations, basic definitions and results which we use in the rest of the paper. All the graphs considered in this paper are finite, simple, undirected and connected. Let $m$ and $n$ be two positive integers. Throughout this paper, $J_{m \times n}$ denotes the $m \times n$ all one matrix, $\textbf{0}_{m \times n}$ denotes the $m \times n$ zero matrix, $I_{n \times n}$ denotes the $n \times n$ identity matrix, $\textbf{0}_{n}$ denotes the zero column matrix with $n$ entries, and $\textbf{1}_{n}$ denotes the all one column matrix with $n$ entries. Also, $K_n$ denotes the complete graph, $Q_n$ denotes the $n$-cube and $\mathrm{Cay}(\Gamma, S)$ denotes the Cayley graph over the group $\Gamma$ with connection set $S$. The \textit{dihedral group}, denoted $D_{2n}$, is defined as $D_{2n} := \langle u,v \colon u^{n} = v^2 = \textbf{1}, vuv = u^{-1}\rangle$.  The \textit{dicyclic group}, denoted $T_{4n}$, is defined as $T_{4n} := \langle u,v \colon u^{2n} = \textbf{1}, u^n = v^2, v^{-1}uv = u^{-1}\rangle$. The \textit{semi-dihedral group}, denoted $SD_{8n}$, is defined as $SD_{8n} := \langle u,v \colon u^{4n} = v^2 = \textbf{1}, vuv = u^{2n-1}\rangle$.

Let $G$ be a graph on $n$ vertices. Also, let $L_G$ be the Laplacian matrix of $G$ and $\theta_0,\theta_1, \ldots, \theta_p$ be the Laplacian eigenvalues such that $0=\theta_0<\theta_1 < \cdots < \theta_p$. If $E_{\theta_0}{(G)},E_{\theta_1}{(G)}, \ldots, E_{\theta_p}{(G)}$ are the Laplacian eigenprojectors corresponding to the eigenvalues $\theta_0,\theta_1, \ldots, \theta_p$, then the spectral decomposition of $L_G$ is 
$$L_G = \sum_{j=0}^{p}\theta_jE_{\theta_j}{(G)}.$$
Let $\textbf{e}^n_a - \textbf{e}^n_b$ and $\textbf{e}^n_c - \textbf{e}^n_d$ be two linearly independent pair states of $G$. The \textit{Laplacian eigenvalue support} of $\textbf{e}^n_a - \textbf{e}^n_b$, denoted $\Phi_{ab}$, is defined as the set $\{\theta_j \colon E_{\theta_j}{(G)}(\textbf{e}^n_a - \textbf{e}^n_b) \neq \textbf{0}_n \}$. The pair states $\textbf{e}^n_a - \textbf{e}^n_b$ and $\textbf{e}^n_c - \textbf{e}^n_d$ are called \textit{Laplacian strongly cospectral} if $E_{\theta_j}{(G)}(\textbf{e}^n_a - \textbf{e}^n_b) = \pm E_{\theta_j}{(G)}(\textbf{e}^n_c - \textbf{e}^n_d)~~\mathrm{for}~0 \leq j \leq p$. If $\textbf{e}^n_a - \textbf{e}^n_b$ and $\textbf{e}^n_c - \textbf{e}^n_d$ are Laplacian strongly cospectral, then we consider the sets $\Phi^{+}_{ab,cd}$ and $\Phi^{-}_{ab,cd}$ such that 
\begin{align*}
\Phi^{+}_{ab,cd} &= \{\theta_j \colon E_{\theta_j}{(G)}(\textbf{e}^n_a - \textbf{e}^n_b) = E_{\theta_j}{(G)}(\textbf{e}^n_c - \textbf{e}^n_d) \neq \textbf{0}_n\}~\mathrm{and}~\\
\Phi^{-}_{ab,cd} &= \{\theta_j \colon E_{\theta_j}{(G)}(\textbf{e}^n_a - \textbf{e}^n_b) = -E_{\theta_j}{(G)}(\textbf{e}^n_c - \textbf{e}^n_d) \neq \textbf{0}_n\}.
\end{align*}
It follows that for Laplacian strongly cospectral pair states $\textbf{e}^n_a - \textbf{e}^n_b$ and $\textbf{e}^n_c - \textbf{e}^n_d$, we have 
$$\Phi_{ab} = \Phi_{cd} = \Phi^{+}_{ab,cd} \cup \Phi^{-}_{ab,cd}.$$ 
It is clear that the set $\Phi_{ab}$ is non-empty for any pair state $\textbf{e}^n_a - \textbf{e}^n_b$ of $G$. The state $\textbf{e}^n_a - \textbf{e}^n_b$ is called a \textit{fixed state} if $\Phi_{ab}$ is a singleton set. Jiang et al.~\cite{
PairStateTransferQGraph2025} proved the following lemma.
\begin{lema}\emph{\cite{PairStateTransferQGraph2025}}\label{fixed state}
Let $\textbf{e}^n_a - \textbf{e}^n_b$ and $\textbf{e}^n_c - \textbf{e}^n_d$ be two Laplacian strongly cospectral pair states of a graph $G$. Then both $\Phi^{+}_{ab,cd}$ and $\Phi^{-}_{ab,cd}$ are non-empty, that is, $\textbf{e}^n_a - \textbf{e}^n_b$ and $\textbf{e}^n_c - \textbf{e}^n_d$ cannot be fixed.
\end{lema}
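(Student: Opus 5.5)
We argue by contradiction, one set at a time, using two facts: the eigenprojectors of $L_G$ sum to the identity, and $\textbf{e}^n_a - \textbf{e}^n_b$ and $\textbf{e}^n_c - \textbf{e}^n_d$ are linearly independent. Summing the spectral decomposition gives
\begin{align*}
\textbf{e}^n_a - \textbf{e}^n_b = \sum_{j=0}^{p} E_{\theta_j}(G)(\textbf{e}^n_a - \textbf{e}^n_b) = \sum_{\theta_j \in \Phi_{ab}} E_{\theta_j}(G)(\textbf{e}^n_a - \textbf{e}^n_b),
\end{align*}
and the same holds for $\textbf{e}^n_c - \textbf{e}^n_d$ with $\Phi_{cd}$. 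Laplacian strong cospectrality gives $\Phi_{ab} = \Phi_{cd} = \Phi^{+}_{ab,cd} \cup \Phi^{-}_{ab,cd}$, and this support is non-empty.

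First suppose $\Phi^{-}_{ab,cd} = \emptyset$. Then every $\theta_j$ in the common support satisfies $E_{\theta_j}(G)(\textbf{e}^n_a - \textbf{e}^n_b) = E_{\theta_j}(G)(\textbf{e}^n_c - \textbf{e}^n_d)$. For $\theta_j$ outside the support both projections are $\textbf{0}_n$, so they agree there too. Summing over $j$ yields $\textbf{e}^n_a - \textbf{e}^n_b = \textbf{e}^n_c - \textbf{e}^n_d$, which contradicts linear independence.

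Now suppose $\Phi^{+}_{ab,cd} = \emptyset$. The same summation gives $\textbf{e}^n_a - \textbf{e}^n_b = -(\textbf{e}^n_c - \textbf{e}^n_d)$, again contradicting linear independence. So both sets are non-empty.

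Since $\Phi^{+}_{ab,cd}$ and $\Phi^{-}_{ab,cd}$ are disjoint and both non-empty, $|\Phi_{ab}| = |\Phi_{cd}| \geq 2$, so neither state is fixed. The only point needing care is that the sets can indeed be disjoint: a single eigenvalue cannot lie in both, since that would force $v = -v \neq \textbf{0}_n$. The argument is otherwise routine; the only tools are the resolution of the identity and the linear independence hypothesis built into the definition.
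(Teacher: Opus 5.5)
Your proof is correct and complete. If $\Phi^{-}_{ab,cd}$ were empty, the resolution of the identity $\sum_{j}E_{\theta_j}(G)=I$ together with strong cospectrality would give $\textbf{e}^n_a-\textbf{e}^n_b=\textbf{e}^n_c-\textbf{e}^n_d$, and if $\Phi^{+}_{ab,cd}$ were empty it would give $\textbf{e}^n_a-\textbf{e}^n_b=-(\textbf{e}^n_c-\textbf{e}^n_d)$. You correctly identify linear independence as the hypothesis that rules out both. It is part of the paper's standing setup for strong cospectrality, and it is genuinely needed: for a pair state taken with itself, $\Phi^{-}_{ab,cd}=\emptyset$. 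Your disjointness remark then gives $|\Phi_{ab}|\geq 2$.

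The paper gives no proof of its own; it quotes the lemma from Jiang et al.\ and remarks that it also follows from Godsil et al. Your resolution-of-identity argument is the standard way to establish it.
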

Note that Lemma~\ref{fixed state} also follows from Godsil et al.~\cite{RealStateTransfer2025}. Chen and Godsil~\cite{QChen2020} obtained necessary and sufficient conditions for the existence of Pair-LPST.
\begin{theorem}\emph{\cite{QChen2020}}\label{Conditions for the existence of perfect pair state transfer}
Let $\textbf{e}^n_a - \textbf{e}^n_b$ and $\textbf{e}^n_c - \textbf{e}^n_d$ be two linearly independent pair states of a graph $G$. Further, let $\Phi_{ab}$ be the set $\{\theta_0, \ldots, \theta_{\ell}\}$ such that $\theta_0 \in \Phi^{+}_{ab,cd}$. Then $G$ exhibits Pair-LPST between $\textbf{e}^n_a - \textbf{e}^n_b$ and $\textbf{e}^n_c - \textbf{e}^n_d$  if and only if the following three conditions hold.
\begin{enumerate}
\item[(i)] The pair states $\textbf{e}^n_a - \textbf{e}^n_b$ and $\textbf{e}^n_c - \textbf{e}^n_d$ are Laplacian strongly cospectral.
\item[(ii)] Either $\Phi_{ab} \subseteq \mathbb{Z}$ or there is an integer $x$ and a square-free integer $\Delta$ strictly greater than one such that 
$$\Phi_{ab} \subseteq \left\{\frac{1}{2}(x + y \sqrt{\Delta}) \colon ~y~\mathrm{is~an~even~integer}\right \} ~~\mathrm{or}~~ \Phi_{ab} \subseteq \left\{\frac{1}{2}(x + y \sqrt{\Delta}) \colon ~y~\mathrm{is~an~odd~integer}\right \}.$$
\item[(iii)] Let \[g= \left\{ \begin{array}{rl} \gcd \left(\{\theta_0-\theta_j\}_{j=0}^{\ell}\right) & \textrm{ if } ~\Phi_{ab} \subseteq \mathbb{Z}\\
 \gcd \left(\left\{ \frac{\theta_0-\theta_j}{\sqrt{\Delta}}\right\}_{j=0}^{\ell}\right) & \textrm{ if }~\Phi_{ab} \subseteq \left\{\frac{1}{2}(x + y \sqrt{\Delta}) \colon ~y~\mathrm{is~an~integer}\right \},\\
\end{array}\right. \]
where $x$ is an integer and $\Delta$ is a square-free integer strictly greater than one.
Then

$(a)~\theta_j \in \Phi^{+}_{ab,cd}$ if and only if 
\[ \left\{ \begin{array}{rl} \frac{\theta_0-\theta_j}{g}  \text{ is even}  & \textrm{ if } ~\Phi_{ab} \subseteq \mathbb{Z}\\
\frac{\theta_0-\theta_j}{g\sqrt{\Delta}} \text{ is even}  & \textrm{ if }~\Phi_{ab} \subseteq \left\{\frac{1}{2}(x + y \sqrt{\Delta}) \colon ~y~\mathrm{is~an~integer}\right \},\\
\end{array}\right. \]

$(b)~\theta_j \in \Phi^{-}_{ab,cd}$ if and only if 
\[ \left\{ \begin{array}{rl} \frac{\theta_0-\theta_j}{g}  \text{ is odd}  & \textrm{ if } ~\Phi_{ab} \subseteq \mathbb{Z}\\
\frac{\theta_0-\theta_j}{g\sqrt{\Delta}} \text{ is odd}  & \textrm{ if }~\Phi_{ab} \subseteq \left\{\frac{1}{2}(x + y \sqrt{\Delta}) \colon ~y~\mathrm{is~an~integer}\right \},\\
\end{array}\right. \]

\end{enumerate}
If the conditions $(i)$, $(ii)$ and $(iii)$ hold, then $G$ exhibits Pair-LPST at minimum $t_0$, where 
\[t_0 = \left\{ \begin{array}{cl} \frac{\pi}{g }  & \textrm{ if } ~\Phi_{ab} \subseteq \mathbb{Z}\\
 \frac{\pi}{g \sqrt{\Delta}} & \textrm{ if }~\Phi_{ab} \subseteq \left\{\frac{1}{2}(x + y \sqrt{\Delta}) \colon ~y~\mathrm{is~an~integer}\right \},\\
\end{array}\right. \]
\end{theorem}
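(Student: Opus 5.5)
We prove the theorem from the spectral decomposition of $U_G(t)$. Write $\mathbf{u}=\textbf{e}^n_a-\textbf{e}^n_b$ and $\mathbf{v}=\textbf{e}^n_c-\textbf{e}^n_d$, so that $\|\mathbf{u}\|^2=\|\mathbf{v}\|^2=2$. Then
\begin{align*}
\frac{1}{2}\mathbf{u}^TU_G(t)\mathbf{v}=\frac12\sum_{j}e^{-\mathbf{i}t\theta_j}\,\mathbf{u}^TE_{\theta_j}(G)\mathbf{v}.
\end{align*}

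\emph{Step 1: characterize Pair-LPST as a phase condition.} By Cauchy--Schwarz, $|\frac12\mathbf{u}^TU_G(t)\mathbf{v}|=1$ holds if and only if $U_G(t)\mathbf{v}=\gamma\mathbf{u}$ for some unimodular $\gamma$. Applying $E_{\theta_j}(G)$ to both sides, this is equivalent to
\begin{align*}
e^{-\mathbf{i}t\theta_j}E_{\theta_j}(G)\mathbf{v}=\gamma E_{\theta_j}(G)\mathbf{u}\quad\text{for all } j.
\end{align*}
Hence $\Phi_{ab}=\Phi_{cd}$, and $E_{\theta_j}(G)\mathbf{u}$ is a scalar multiple of $E_{\theta_j}(G)\mathbf{v}$. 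Both vectors are real, and since $\|\mathbf{u}\|=\|\mathbf{v}\|$, taking norms shows the ratio of norms is $1$. Therefore the scalar is real of modulus one, i.e. $\pm1$. This gives (i), and $e^{-\mathbf{i}t\theta_j}=\pm\gamma$, with the sign determined by whether $\theta_j\in\Phi^{+}_{ab,cd}$ or $\Phi^{-}_{ab,cd}$.

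\emph{Step 2: take the ratio condition.} Since $\theta_0\in\Phi^{+}_{ab,cd}$, we get $\gamma=e^{-\mathbf{i}t\theta_0}$. The condition then reads $e^{\mathbf{i}t(\theta_0-\theta_j)}=1$ for $\theta_j\in\Phi^+$ and $=-1$ for $\theta_j\in\Phi^-$. Equivalently, $t(\theta_0-\theta_j)/\pi$ is an even integer for $\theta_j\in\Phi^+$ and an odd integer for $\theta_j\in\Phi^-$. By Lemma~\ref{fixed state}, $\Phi^{-}_{ab,cd}$ is non-empty, so some difference $\theta_0-\theta_j$ is nonzero. Therefore all ratios $(\theta_0-\theta_j)/(\theta_0-\theta_k)$ are rational.

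\emph{Step 3: show the ratio condition forces (ii).} The eigenvalues are algebraic integers: they are roots of the integer characteristic polynomial of $L_G$. The support $\Phi_{ab}$ is closed under Galois conjugation, because $E_\theta$ for conjugate $\theta$ is obtained by applying the field automorphism, and $\mathbf{u}$ is rational. This is the standard Godsil-type argument, and I expect it to be the main obstacle. From rational ratios, every $\theta_j=\theta_0+q_j\delta$ with $q_j\in\mathbb{Q}$. Conjugation preserves the support set and the rational-difference structure. The standard argument then shows that either all the $\theta_j$ are integers, or all lie in a single quadratic field $\mathbb{Q}(\sqrt{\Delta})$ with equal rational parts. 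Taking traces, the common rational part $x/2$ has $x\in\mathbb{Z}$, and the coefficients of $\sqrt{\Delta}$ are half-integers of the same parity type, since each $\theta_j$ is an algebraic integer. Here $\Delta>1$ is forced because the eigenvalues of the symmetric matrix $L_G$ are real.

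\emph{Step 4: derive (iii) and the time, and prove the converse.} Put $\mu_j=\theta_0-\theta_j$ in the integer case, or $\mu_j=(\theta_0-\theta_j)/\sqrt{\Delta}$ in the quadratic case. These are integers, and $g$ is their gcd. The condition from Step 2 asks for a real $t'=t/\pi$ (rescaled by $\sqrt{\Delta}$ in the quadratic case) such that each $t'\mu_j$ is an integer whose parity matches the membership in $\Phi^{\pm}$. A Bezout argument shows that $t'\mu_j\in\mathbb{Z}$ for all $j$ forces $t'\in\frac1g\mathbb{Z}$. Writing $t'=k/g$, the parity of $k\mu_j/g$ must be even on $\Phi^+$ and odd on $\Phi^-$. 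Because $\Phi^-$ is non-empty, $k$ must be odd, and then the parity of $k\mu_j/g$ equals the parity of $\mu_j/g$, which is exactly (iii). Conversely, if (i)--(iii) hold, take $k=1$. This gives Pair-LPST at $t_0=\pi/g$, respectively $\pi/(g\sqrt{\Delta})$. It is the minimal positive time since $k$ must be a positive odd integer.
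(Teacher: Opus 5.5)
Your outline is the standard route for this kind of characterization; the paper gives no proof of its own and simply quotes the result from Chen and Godsil. Steps 1, 2 and 4 are correct as written. These cover the Cauchy--Schwarz reduction to $U_G(t)\mathbf{v}=\gamma\mathbf{u}$, the extraction of strong cospectrality and of the phase conditions $e^{\mathbf{i}t(\theta_0-\theta_j)}=\pm1$, the Bezout/parity bookkeeping that yields (iii), the converse with $k=1$, and the minimality of $t_0$. One small correction in Step 1: the scalar relating $E_{\theta_j}(G)\mathbf{u}$ and $E_{\theta_j}(G)\mathbf{v}$ is $e^{-\mathbf{i}t\theta_j}\bar{\gamma}$, which has modulus one for that reason, not because $\|\mathbf{u}\|=\|\mathbf{v}\|$. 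Being a ratio of real vectors, it is then $\pm1$.

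The gap is Step 3, which is where all of condition (ii) lives. You state the conclusion of ``the standard argument'', but the remark that conjugation preserves the rational-difference structure does not by itself place the $\theta_j$ in a quadratic field. Either cite Godsil's ratio-condition lemma explicitly, or supply the missing deduction, which is short:

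\begin{itemize}
\item Fix $\theta_k\in\Phi^{-}_{ab,cd}$ and put $\delta=\theta_0-\theta_k\neq0$. By Step 2, every difference of two elements of $\Phi_{ab}$ is a rational multiple of $\delta$.
\item Let $\sigma$ be in the Galois group of the splitting field of the characteristic polynomial of $L_G$. By your closure argument, $\sigma$ permutes $\Phi_{ab}$, so $\sigma(\delta)=\sigma(\theta_0)-\sigma(\theta_k)=r_\sigma\delta$ with $r_\sigma\in\mathbb{Q}$.
\item If $N$ is the order of $\sigma$, then $\delta=\sigma^N(\delta)=r_\sigma^N\delta$. So $r_\sigma=\pm1$, and $\sigma(\delta^2)=\delta^2$ for every $\sigma$, hence $\delta^2\in\mathbb{Q}$.
\item Since $\delta$ is real and nonzero, $\delta^2>0$ (this, rather than $\Delta>1$, is what reality of the eigenvalues gives you). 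So $\delta\in\mathbb{Q}\sqrt{\Delta}$ for a square-free integer $\Delta\geq1$.
\item The sum $\sum_{j=0}^{\ell}\theta_j$ is Galois-invariant, hence rational. Writing $\theta_j=\theta_0-q_j\delta$ with $q_j\in\mathbb{Q}$ gives $(\ell+1)\theta_0\in\mathbb{Q}+\mathbb{Q}\sqrt{\Delta}$. So every $\theta_j$ lies in $\mathbb{Q}(\sqrt{\Delta})$, with the same rational part as $\theta_0$.
\item If $\Delta=1$, the $\theta_j$ are rational algebraic integers, hence integers. If $\Delta>1$, write $\theta_j=\frac12(x+y_j\sqrt{\Delta})$. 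The trace gives $x\in\mathbb{Z}$. The norm gives $y_j^2\Delta\in\mathbb{Z}$, and since $\Delta$ is square-free this forces $y_j\in\mathbb{Z}$. Then $x^2\equiv y_j^2\Delta \pmod{4}$ forces $y_j\equiv x\pmod{2}$, which is exactly the form in (ii).
\end{itemize}

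With this inserted, your proof is complete.
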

The next theorem provides another necessary and sufficient conditions for the existence of Pair-LPST.
\begin{theorem}\emph{\cite{QChen2020}}\label{More conditions for the existence of perfect pair state transfer}
Let $\textbf{e}^n_a - \textbf{e}^n_b$ and $\textbf{e}^n_c - \textbf{e}^n_d$ be two linearly independent pair states of a graph $G$, and $\theta_0 \in \Phi^{+}_{ab,cd}$. Then $G$ exhibits Pair-LPST between $\textbf{e}^n_a - \textbf{e}^n_b$ and $\textbf{e}^n_c - \textbf{e}^n_d$ at $t_1$ if and only if the following three conditions hold.
\begin{enumerate}
\item[(i)] The pair states $\textbf{e}^n_a - \textbf{e}^n_b$ and $\textbf{e}^n_c - \textbf{e}^n_d$ are Laplacian strongly cospectral.
\item[(ii)] There exists an integer $k_1$ such that $t_1(\theta_0 - \theta_j) = 2k_1 \pi$ for all $\theta_j \in \Phi^{+}_{ab,cd}$. 
\item[(iii)] There exists an integer $k_2$ such that $t_1(\theta_0 - \theta_j) = (2k_2+1) \pi$ for all $\theta_j \in \Phi^{-}_{ab,cd}$. 
\end{enumerate}
\end{theorem}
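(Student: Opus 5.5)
We prove Theorem~\ref{More conditions for the existence of perfect pair state transfer} directly from the spectral decomposition $U_G(t)=\sum_{j=0}^{p} e^{-\mathbf{i}t\theta_j}E_{\theta_j}(G)$. Write $u=\textbf{e}^n_a-\textbf{e}^n_b$ and $w=\textbf{e}^n_c-\textbf{e}^n_d$; both have norm $\sqrt 2$. Then
\[
\tfrac12 u^TU_G(t_1)w=\tfrac12\sum_{j} e^{-\mathbf{i}t_1\theta_j}\,u^TE_{\theta_j}(G)w .
\]
By Cauchy--Schwarz, $\left|\tfrac12 u^TU_G(t_1)w\right|\le 1$. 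Equality holds if and only if $U_G(t_1)w=\gamma u$ for some unimodular scalar $\gamma$. So Pair-LPST at $t_1$ is equivalent to $U_G(t_1)w=\gamma u$ with $|\gamma|=1$, and the whole proof works from this reformulation.

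\emph{Necessity.} Apply $E_{\theta_j}(G)$ to the identity $U_G(t_1)w=\gamma u$. Since $E_{\theta_j}(G)$ commutes with $U_G(t_1)$ and $E_{\theta_j}(G)U_G(t_1)=e^{-\mathbf{i}t_1\theta_j}E_{\theta_j}(G)$, we get
\[
e^{-\mathbf{i}t_1\theta_j}E_{\theta_j}(G)w=\gamma E_{\theta_j}(G)u .
\]
All projectors and vectors are real. Hence, whenever $E_{\theta_j}(G)u\neq \textbf{0}_n$, the number $\gamma^{-1}e^{-\mathbf{i}t_1\theta_j}$ must be real and unimodular, so it equals $\pm1$. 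This gives $E_{\theta_j}(G)u=\pm E_{\theta_j}(G)w$. When $E_{\theta_j}(G)u=\textbf{0}_n$, we also get $E_{\theta_j}(G)w=\textbf{0}_n$, so condition (i) holds.

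Next take $j=0$. We have $\theta_0=0$, and since $\theta_0\in\Phi^{+}_{ab,cd}$, the sign there is $+$, which forces $\gamma=e^{-\mathbf{i}t_1\theta_0}$. For any other $\theta_j$, the sign is then $e^{\mathbf{i}t_1(\theta_0-\theta_j)}$. If $\theta_j\in\Phi^{+}_{ab,cd}$, this sign is $+1$, so $t_1(\theta_0-\theta_j)$ is an even multiple of $\pi$. If $\theta_j\in\Phi^{-}_{ab,cd}$, it is $-1$, so $t_1(\theta_0-\theta_j)$ is an odd multiple of $\pi$. These are conditions (ii) and (iii), read as ``an even (respectively odd) multiple of $\pi$'' for each $\theta_j$.

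\emph{Sufficiency.} Assume (i)--(iii) and put $\gamma=e^{-\mathbf{i}t_1\theta_0}$. Because $w=\sum_{\theta_j\in\Phi_{cd}}E_{\theta_j}(G)w$, we can expand
\[
U_G(t_1)w=\sum_{\theta_j\in\Phi^{+}_{ab,cd}}e^{-\mathbf{i}t_1\theta_j}E_{\theta_j}(G)u-\sum_{\theta_j\in\Phi^{-}_{ab,cd}}e^{-\mathbf{i}t_1\theta_j}E_{\theta_j}(G)u .
\]
Conditions (ii) and (iii) say $e^{-\mathbf{i}t_1\theta_j}=\gamma$ on $\Phi^{+}_{ab,cd}$ and $e^{-\mathbf{i}t_1\theta_j}=-\gamma$ on $\Phi^{-}_{ab,cd}$. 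Substituting, every term becomes $\gamma E_{\theta_j}(G)u$, so the sum is $\gamma\sum_j E_{\theta_j}(G)u=\gamma u$. Therefore $\left|\tfrac12 u^TU_G(t_1)w\right|=\tfrac12|\gamma|\,u^Tu=1$.

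The only delicate step is the equality case of Cauchy--Schwarz together with the reality argument that pins each phase to $\pm1$. A further subtlety is that the integers $k_1,k_2$ in (ii) and (iii) should be allowed to depend on $j$; the natural reading is ``an even (respectively odd) multiple of $\pi$'', and I would state the proof that way.
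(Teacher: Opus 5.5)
The paper does not prove this statement; it quotes it from Chen and Godsil, so there is no in-paper argument to compare against. Your proof is correct and follows the standard eigenspace-projection route. By the equality case of Cauchy--Schwarz ($u$ real, $U_G(t_1)$ unitary), Pair-LPST at $t_1$ is equivalent to $U_G(t_1)w=\gamma u$ with $|\gamma|=1$. Projecting onto each eigenspace, and using that $E_{\theta_j}(G)$, $u$ and $w$ are real, pins each phase $\gamma^{-1}e^{-\mathbf{i}t_1\theta_j}$ on the support to $\pm 1$; this gives (i)--(iii). Running the same bookkeeping backwards gives sufficiency.

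Two small repairs are needed.
\begin{itemize}
\item Delete the clause ``$\theta_0=0$''. Here $\theta_0$ is just a chosen element of $\Phi^{+}_{ab,cd}$, as in Theorem~\ref{Conditions for the existence of perfect pair state transfer}, where $\Phi_{ab}$ is relabelled $\{\theta_0,\ldots,\theta_\ell\}$. The eigenvalue $0$ never lies in the support of a pair state of a connected graph, because $\frac1n J_{n\times n}$ annihilates $\textbf{e}^n_a-\textbf{e}^n_b$. Since you only use $\gamma=e^{-\mathbf{i}t_1\theta_0}$, nothing else changes.
\item Pair-LPST requires $t_1\neq 0$, and this is the one place where linear independence enters the sufficiency direction. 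If $\Phi^{-}_{ab,cd}$ were empty, strong cospectrality would give $w=\sum_j E_{\theta_j}(G)w=\sum_j E_{\theta_j}(G)u=u$. So $\Phi^{-}_{ab,cd}\neq\emptyset$, and (iii) then forces $t_1\neq 0$.
\end{itemize}

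Your remark about $k_1,k_2$ is right and substantive, not cosmetic. With a single $k_1$, putting $\theta_j=\theta_0$ in (ii) gives $k_1=0$, hence $\Phi^{+}_{ab,cd}=\{\theta_0\}$ for $t_1\neq 0$. Likewise a single $k_2$ forces $|\Phi^{-}_{ab,cd}|\le 1$. The ``only if'' direction then fails. For example, in $Q_3$ the pair states $\textbf{e}^8_{000}-\textbf{e}^8_{100}$ and $\textbf{e}^8_{111}-\textbf{e}^8_{011}$ have $\Phi^{+}_{ab,cd}=\{4\}$ and $\Phi^{-}_{ab,cd}=\{2,6\}$. Pair-LPST occurs at $\pi/2$, with $t_1(\theta_0-\theta_j)$ equal to $\pi$ and $-\pi$ respectively. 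The $j$-dependent reading you adopt is the correct one, and it is exactly what your argument proves.
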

Let $G$ exhibit Pair-LPST between $\textbf{e}^n_a - \textbf{e}^n_b$ and $\textbf{e}^n_c - \textbf{e}^n_d$ at minimum $t_0$. If $G$ exhibits Pair-LPST between $\textbf{e}^n_a - \textbf{e}^n_b$ and $\textbf{e}^n_c - \textbf{e}^n_d$ at $t_1$, then Lemma~\ref{fixed state}, Theorem~\ref{Conditions for the existence of perfect pair state transfer} and Theorem~\ref{More conditions for the existence of perfect pair state transfer} altogether imply that $t_1$ is a rational multiple of $t_0$. 

In the following, we state a theorem on simultaneous approximation of numbers, called \textit{Kronecker's approximation theorem}.
\begin{theorem}\emph{\cite{Apostol1990}}\label{Kronecker approximation theorem}
Let $\alpha_1, \ldots, \alpha_r$ be arbitrary real numbers. Also, let $1, y_1, \ldots, y_r$ be real and linearly independent over $\mathbb{Q}$. Then for every positive real number $\varepsilon$, there exist integers $\ell,s_1, \ldots, s_r$ such that 
$$|\ell y_j - s_j -\alpha_j |<\varepsilon~~\mathrm{for}~1 \leq j \leq r.$$
\end{theorem}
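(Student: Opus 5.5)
We would prove Kronecker's approximation theorem (Theorem~\ref{Kronecker approximation theorem}) by an equidistribution argument on the torus $\mathbb{T}^r=\mathbb{R}^r/\mathbb{Z}^r$. Write $y=(y_1,\ldots,y_r)$ and $\alpha=(\alpha_1,\ldots,\alpha_r)$. The conclusion says exactly that some integer multiple $\ell y$ lies, modulo $\mathbb{Z}^r$, within $\varepsilon$ of $\alpha$ in every coordinate. The integers $s_j$ are then just the integer parts needed to realize this distance. So it suffices to show that $\{\ell y \bmod \mathbb{Z}^r : \ell\ge 1\}$ meets every open box in $\mathbb{T}^r$.

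The first step is the exponential-sum estimate. Fix a nonzero $m\in\mathbb{Z}^r$ and put $\beta=m\cdot y=\sum_j m_jy_j$. Since $1,y_1,\ldots,y_r$ are linearly independent over $\mathbb{Q}$, $\beta$ is not an integer, and so $e^{2\pi\mathbf{i}\beta}\neq 1$. Summing the geometric series gives
$$\Bigl|\frac1N\sum_{\ell=1}^{N}e^{2\pi\mathbf{i}\ell\beta}\Bigr|\le \frac{2}{N\,|e^{2\pi\mathbf{i}\beta}-1|}\longrightarrow 0 \quad (N\to\infty).$$
For $m=0$ the average is identically $1$. By linearity, for every trigonometric polynomial $P(x)=\sum_m c_m e^{2\pi\mathbf{i} m\cdot x}$ we get $\frac1N\sum_{\ell=1}^N P(\ell y)\to c_0=\int_{\mathbb{T}^r}P$.

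The second step passes from trigonometric polynomials to a bump function, and this approximation is the main technical point. Choose a continuous $f\colon\mathbb{T}^r\to[0,1]$ that is supported in the box of half-width $\varepsilon$ around $\alpha$ and has $\int f=\mu>0$; for instance, take a product of tent functions. By Stone--Weierstrass, trigonometric polynomials are uniformly dense in $C(\mathbb{T}^r)$; alternatively, one can use Fej\'er kernels to stay self-contained. So there is a $P$ with $\|f-P\|_\infty<\mu/3$. Then $\bigl|\frac1N\sum_{\ell\le N} f(\ell y)-\mu\bigr|$ is at most $2\mu/3$ plus a term that tends to zero. Hence for large $N$ the average of $f(\ell y)$ exceeds $\mu/4>0$. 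In particular $f(\ell y)>0$ for some $\ell\ge1$, which means $\ell y$ lies in the $\varepsilon$-box around $\alpha$ modulo $\mathbb{Z}^r$.

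Finally, choose $s_j$ to be the integer nearest to $\ell y_j-\alpha_j$. This gives $|\ell y_j-s_j-\alpha_j|<\varepsilon$ for all $j$ simultaneously, and $\ell$ may even be taken positive. The only genuinely analytic ingredient is the uniform density of trigonometric polynomials. If we wanted to avoid it, we would instead argue that the closure of $\{\ell y\}$ is a closed subgroup of $\mathbb{T}^r$. If that subgroup were proper, it would be annihilated by a nontrivial character $x\mapsto e^{2\pi\mathbf{i} m\cdot x}$, which forces $m\cdot y\in\mathbb{Z}$ and contradicts linear independence.
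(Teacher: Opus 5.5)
Your proof is correct. The paper gives no proof of this statement: it quotes it from Apostol, whose argument (as I recall it) is Bohr's, so your route is genuinely different from the cited one.

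In Bohr's approach one first proves the \emph{continuous} version. This asks for a real $t$ with $|t\theta_j-h_j-\alpha_j|<\varepsilon$, assuming only that $\theta_1,\ldots,\theta_k$ are independent over $\mathbb{Q}$. The proof looks at
$$F(t)=1+\sum_{j=1}^{k}e^{2\pi\mathbf{i}(t\theta_j-\alpha_j)}.$$
Independence makes the frequencies of $F(t)^p$ distinct. Mean values over $[0,T]$ then bound each coefficient of $F(t)^p$ by $(\sup_t|F(t)|)^p$. The coefficients have absolute values summing to $(k+1)^p$ over at most $(p+1)^k$ terms, so letting $p\to\infty$ forces $\sup_t|F(t)|=k+1$. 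The discrete statement is recovered by adjoining $1$ to the list of frequencies and rounding $t$ to an integer.

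The trade-off between the two routes is as follows. Bohr's method needs nothing beyond the mean-value formula for finite exponential sums and a term count. Yours needs uniform density of trigonometric polynomials (Stone--Weierstrass or Fej\'er kernels). In exchange, your Weyl-sum argument handles integer $\ell$ directly, without the detour through real $t$. It also proves more: the good $\ell\ge 1$ have lower density at least $\mu>0$.

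Two small remarks.
\begin{itemize}
\item First reduce to $\varepsilon<1/2$, so the tent-function box does not wrap around $\mathbb{T}^r$. Then use the open set where $f>0$ to get the strict inequality.
\item Your closing alternative does not really bypass the analytic input. The fact that a proper closed subgroup of $\mathbb{T}^r$ is annihilated by a nontrivial character is itself normally proved using the same density of characters, or the structure theory of closed subgroups of $\mathbb{R}^r$.
\end{itemize}
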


The next lemma provides some sets of numbers that are linearly independent over $\mathbb{Q}$.
\begin{lema}\emph{\cite{Richard1974}}\label{Linearly independent corollary}
The set $\{\sqrt{\Delta} \colon \Delta~\mathrm{is~a~square~free~integer}\}$ is linearly independent over $\mathbb{Q}$.
\end{lema}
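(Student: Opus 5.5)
The plan is to deduce the lemma from a stronger field-theoretic statement, proved by induction. Throughout, "square-free integer" means a positive square-free integer, so $1$ is included. If negative square-free integers are intended, the same argument works after adjoining $\sqrt{-1}$ as an extra generator, treated like an additional "prime".

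\emph{Reduction.} Take a finite relation $\sum_{i} c_i\sqrt{d_i}=0$ with $c_i\in\mathbb{Q}$ and distinct square-free $d_i$. Let $p_1,\ldots,p_k$ be all primes dividing some $d_i$, and put $K_k=\mathbb{Q}(\sqrt{p_1},\ldots,\sqrt{p_k})$. Each $\sqrt{d_i}$ is one of the $2^k$ products $\prod_{j\in S}\sqrt{p_j}$ with $S\subseteq\{1,\ldots,k\}$. Distinct $d_i$ give distinct subsets $S$, by unique factorisation. These $2^k$ products clearly span $K_k$ over $\mathbb{Q}$. So it suffices to show $[K_k:\mathbb{Q}]=2^k$: the products then form a basis, and all $c_i$ vanish.

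\emph{Key claim, by induction on $k$.} Let $P(k)$ be the statement: for any $k$ distinct primes $p_1,\ldots,p_k$ and any square-free $m>1$ having a prime factor outside $\{p_1,\ldots,p_k\}$, we have $\sqrt{m}\notin K_k$.

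For $k=0$ this says $\sqrt m$ is irrational, which is standard.

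For the step, write $K_k=K_{k-1}(\sqrt{q})$ with $q=p_k$, and suppose $\sqrt m=a+b\sqrt q$ with $a,b\in K_{k-1}$. By $P(k-1)$, $\sqrt q\notin K_{k-1}$, since $q$ lies outside $\{p_1,\ldots,p_{k-1}\}$. Squaring gives
$$m-a^2-qb^2=2ab\sqrt q,$$
so $ab=0$.
\begin{enumerate}
\item[(a)] If $b=0$, then $\sqrt m\in K_{k-1}$. The outside prime of $m$ is still outside $\{p_1,\ldots,p_{k-1}\}$, contradicting $P(k-1)$.
\item[(b)] If $a=0$, then $\sqrt{mq}=bq\in K_{k-1}$. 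Let $m'$ be the square-free part of $mq$, so $\sqrt{m'}\in K_{k-1}$. The outside prime of $m$ is not $q$ and still divides $m'$, so $m'>1$ has a prime factor outside $\{p_1,\ldots,p_{k-1}\}$, again contradicting $P(k-1)$.
\end{enumerate}
This proves $P(k)$.

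\emph{Conclusion.} Applying $P(j-1)$ with $m=p_j$ gives $\sqrt{p_j}\notin K_{j-1}$. Hence $[K_j:K_{j-1}]=2$ for each $j$, so $[K_k:\mathbb{Q}]=2^k$ by the tower law, and the reduction finishes the proof.

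The main obstacle is choosing an inductive hypothesis strong enough to handle case (b). The naive statement "$\sqrt{p_{k+1}}\notin K_k$" is not enough, because case (b) produces $\sqrt{mq}$, a square root of a composite number. That is why $P(k)$ is phrased for arbitrary square-free $m$ with some prime factor outside the generating set.
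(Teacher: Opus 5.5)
Your proof is correct, but it takes a different route from the paper's source. The paper gives no proof of this lemma; it imports it from Richards (1974), whose treatment is Galois-theoretic.

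In that style of argument, one notes that $K_k=\mathbb{Q}(\sqrt{p_1},\dots,\sqrt{p_k})$ is Galois over $\mathbb{Q}$, with group an elementary abelian $2$-group acting on each $\sqrt{d}$ by a sign. One then shows the group has order $2^k$. For instance, the $2^k-1$ fields $\mathbb{Q}(\sqrt d)$, with $d$ a product over a nonempty subset of the $p_j$, are pairwise distinct quadratic subfields. An elementary abelian group of order $2^s$, however, has only $2^s-1$ subgroups of index two.

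You reach the same key fact $[K_k:\mathbb{Q}]=2^k$ using only the tower law and the basis $\{1,\sqrt q\}$ of a quadratic extension. The cost is that the inductive hypothesis must be strengthened from ``$\sqrt{p_{k+1}}\notin K_k$'' to arbitrary square-free $m$ with a prime factor outside the generating set. You correctly identify this as exactly what case (b) needs, and your square-free-part bookkeeping there is right. The Galois route is more conceptual and extends via Kummer theory to higher radicals; yours is fully self-contained and elementary.

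Two minor remarks:
\begin{itemize}
\item The spanning claim in your reduction implicitly uses $K_k=\mathbb{Q}[\sqrt{p_1},\dots,\sqrt{p_k}]$, which holds because the generators are algebraic.
\item If negative square-free integers are meant, you need not rerun the induction with $\sqrt{-1}$ as an extra ``prime''. Since $K_k\subseteq\mathbb{R}$, you already have $[K_k(\sqrt{-1}):K_k]=2$.
\end{itemize}
For the paper's actual use, namely independence of $1$ together with $\sqrt{y_j}$ for square-free $y_j>1$, your positive case suffices.
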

Let $\alpha,\beta$ be complex numbers and $\varepsilon$ be an arbitrarily small positive real number. Then we write the inequality $|\alpha - \beta| < \varepsilon$ as $\alpha \approx \beta$ and omit the dependence on $\varepsilon$. In~\cite{Laplacianprettygoodedgestatetransferinpaths2022}, Wang et al. provided necessary and sufficient conditions for the existence of Pair-LPGST on graphs. 
\begin{theorem}\emph{\cite{Laplacianprettygoodedgestatetransferinpaths2022}}\label{Paths}
Let $\textbf{e}^n_a - \textbf{e}^n_b$ and $\textbf{e}^n_c - \textbf{e}^n_d$ be two linearly independent pair states of a graph $G$, $\Phi^{+}_{ab,cd}=\{\lambda_1,\ldots,\lambda_d\}$ and $\Phi^{-}_{ab,cd}=\{\mu_1,\ldots,\mu_q\}$. Then $G$ exhibits Pair-LPGST between $\textbf{e}^n_a - \textbf{e}^n_b$ and $\textbf{e}^n_c - \textbf{e}^n_d$  if and only if the following two conditions hold.
\begin{enumerate}
\item[(i)] The pair states $\textbf{e}^n_a - \textbf{e}^n_b$ and $\textbf{e}^n_c - \textbf{e}^n_d$ are Laplacian strongly cospectral.
\item[(ii)] If $\ell_1,\ldots,\ell_d,m_1,\ldots,m_q$ are integers such that
$$\sum_{i=1}^{d}\ell_i \lambda_i + \sum_{j=1}^{q}m_j \mu_j = 0~~\mathrm{and}~~\sum_{i=1}^{d}\ell_i + \sum_{j=1}^{q}m_j = 0,~~\mathrm{then}~\sum_{j=1}^{q}m_j~\mathrm{is~even}.$$
\end{enumerate}
\end{theorem}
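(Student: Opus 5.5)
The plan is to reduce Pair-LPGST to a simultaneous phase condition on the eigenvalues in the support, and then decide when that condition can be met using a Kronecker-type argument. Write $\mathbf{u} = \textbf{e}^n_a - \textbf{e}^n_b$ and $\mathbf{v} = \textbf{e}^n_c - \textbf{e}^n_d$, so that $\|\mathbf{u}\|^2 = \|\mathbf{v}\|^2 = 2$. By the spectral decomposition,
$$\frac{1}{2}\mathbf{u}^T U_G(t)\mathbf{v} = \frac{1}{2}\sum_{j} e^{-\mathbf{i}t\theta_j}\,\mathbf{u}^T E_{\theta_j}(G)\mathbf{v}.$$
Since $U_G(t)$ is unitary, Cauchy--Schwarz gives $\left|\frac12 \mathbf{u}^T U_G(t)\mathbf{v}\right| \le 1$. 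Moreover, this quantity is close to $1$ in modulus if and only if $U_G(t)\mathbf{v} \approx \gamma \mathbf{u}$ for some unimodular $\gamma$. This holds because $\|U_G(t)\mathbf{v} - \gamma\mathbf{u}\|^2 = 4 - 2\,\mathrm{Re}(\bar\gamma\, \mathbf{u}^TU_G(t)\mathbf{v})$.

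\emph{Necessity of (i).} Apply $E_{\theta_j}(G)$ to $U_G(t)\mathbf{v} \approx \gamma\mathbf{u}$ to get $e^{-\mathbf{i}t\theta_j}E_{\theta_j}(G)\mathbf{v} \approx \gamma E_{\theta_j}(G)\mathbf{u}$ for every $j$. Take a sequence $\varepsilon_k \to 0$ and pass to a subsequence on which the phases $\gamma$ and $e^{-\mathbf{i}t\theta_j}$ converge. In the limit, $E_{\theta_j}(G)\mathbf{v}$ is a unimodular multiple of $E_{\theta_j}(G)\mathbf{u}$. Both vectors are real, so that multiple is $\pm1$ whenever the vectors are nonzero, which is strong cospectrality.

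\emph{Rewriting the transfer amplitude.} Now assume (i). Using $\mathbf{u}^TE_{\theta}\mathbf{v} = \pm\|E_\theta \mathbf{u}\|^2$ and $\sum_\theta \|E_\theta\mathbf{u}\|^2 = 2$, the amplitude becomes a convex combination with strictly positive weights $w_\theta = \frac12\|E_\theta\mathbf{u}\|^2$:
$$\sum_{i} w_{\lambda_i} e^{-\mathbf{i}t\lambda_i} - \sum_j w_{\mu_j}e^{-\mathbf{i}t\mu_j}.$$
This has modulus close to $1$ if and only if there is a phase $\delta$ with $t\lambda_i \approx \delta$ and $t\mu_j \approx \delta + \pi$ modulo $2\pi$, for all $i,j$. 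The eigenvalue $0$ never occurs, since $\textbf{1}_n^T\mathbf{u} = 0$. So Pair-LPGST is equivalent to the following: the point $P=(0,\dots,0,\pi,\dots,\pi)$ lies in the closure, in the torus $(\mathbb{R}/2\pi\mathbb{Z})^{d+q}$, of the subgroup
$$H=\{(t\lambda_i-\delta,\; t\mu_j - \delta) : t,\delta\in\mathbb{R}\}.$$
A small perturbation is needed to keep $t \ne 0$, but the approximations can always be taken at large $t$.

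\emph{Necessity of (ii).} Suppose the integers $\ell_i, m_j$ satisfy the two relations in (ii). Consider the character $\chi(x,y)=\sum \ell_i x_i + \sum m_j y_j$ modulo $2\pi$. It vanishes identically on $H$, hence on its closure. Evaluating at $P$ gives $\pi\sum m_j \equiv 0 \pmod{2\pi}$, so $\sum m_j$ is even.

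\emph{Sufficiency of (ii).} This is the main obstacle, because Theorem~\ref{Kronecker approximation theorem} as stated needs linear independence over $\mathbb{Q}$ of $1, y_1, \ldots, y_r$. I would first use the $\delta$ freedom to normalise: set $\delta = t\lambda_1$. The targets then become $t(\lambda_i-\lambda_1)\approx 0$ and $t(\mu_j-\lambda_1)\approx \pi$ modulo $2\pi$. Here $\Phi^+_{ab,cd}\neq\emptyset$ by Lemma~\ref{fixed state}. Next, choose a $\mathbb{Q}$-basis $\beta_1,\dots,\beta_s$ of the span of the differences $\lambda_i-\lambda_1$ and $\mu_j-\lambda_1$. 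Clearing denominators, this can be arranged so that every difference is an integer combination of the $\beta_k$ with a common structure. The hypothesis (ii), translated into these coordinates, says that the required targets are consistent: no integer relation among the differences forces an odd multiple of $\pi$ to equal $0$ modulo $2\pi$. This lets me prescribe targets $\alpha_k$ for $t\beta_k/2\pi$ that are half-integers or integers, producing the desired values $0$ or $\pi$.

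Finally, apply Theorem~\ref{Kronecker approximation theorem}, with $t = 2\pi \ell \cdot c$ for a suitable scaling $c$, so that the needed independence of $1$ together with the $\beta_k c$ holds. If $1$ lies in the span, I would instead treat it by an extra parity bookkeeping step, absorbing it into the integer part. The delicate point is verifying that the parity constraint (ii) is exactly the compatibility condition for these lattice targets, i.e.\ a Pontryagin-duality statement for the closed subgroup $\overline H$. I would prove this directly via the Smith normal form of the integer relation matrix, rather than citing duality.
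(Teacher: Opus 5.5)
The paper does not prove this theorem. It quotes it from Wang et al.\ and uses it as a black box, so the comparison below is with the standard argument behind it.

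\emph{Where you agree with the standard argument.} Your reduction follows it:
\begin{itemize}
\item necessity of (i), by projecting $U_G(t)\mathbf{v}\approx\gamma\mathbf{u}$ onto eigenspaces and passing to a limit;
\item the rewriting of the amplitude as a convex combination with positive weights, so that Pair-LPGST means $t\lambda_i\approx\delta$ and $t\mu_j\approx\delta+\pi$ modulo $2\pi$.
\end{itemize}
Your worry about keeping $t\neq 0$ is unnecessary. Both $\Phi^{\pm}_{ab,cd}$ are nonempty, so points of $H$ with $|t|$ small lie near the diagonal $(-\delta,\dots,-\delta)$, which stays at distance at least $\pi/2$ from $P$. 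Your character argument for the necessity of (ii) is correct.

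\emph{Where you differ: sufficiency.} The usual proof applies Kronecker's theorem in its general form: for real $\eta_1,\dots,\eta_N$ and targets $\alpha_1,\dots,\alpha_N$, the inequalities $|t\eta_k-\alpha_k|<\varepsilon \pmod{2\pi}$ are solvable in real $t$ for every $\varepsilon$ if and only if every integer relation $\sum_k n_k\eta_k=0$ forces $\sum_k n_k\alpha_k\in 2\pi\mathbb{Z}$. It applies this directly to the differences $\lambda_i-\lambda_1$ and $\mu_j-\lambda_1$, with targets $0$ and $\pi$. The substitution $\ell_1=-\sum_{i\geq 2}\ell_i-\sum_j m_j$ identifies the hypothesis with (ii), so sufficiency is one line. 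Equivalently, $\overline{H}=(H^{\perp})^{\perp}$, where $H^{\perp}$ is exactly the set of integer vectors $(\ell,m)$ satisfying the two relations in (ii). You instead rebuild this general form from the restricted Theorem~\ref{Kronecker approximation theorem}. That is longer, but self-contained relative to what the paper states.

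\emph{How to finish your route cleanly.} Your sketch goes through, and more simply than you expect; no Smith normal form is needed.
\begin{itemize}
\item Take $\beta_1,\dots,\beta_s$ to be a $\mathbb{Z}$-basis of the lattice $\Lambda\subset\mathbb{R}$ generated by the differences. $\Lambda$ is finitely generated and torsion-free, hence free, and a $\mathbb{Z}$-basis is automatically $\mathbb{Q}$-independent.
\item Send the generator for $\lambda_i-\lambda_1$ to $0$ and the generator for $\mu_j-\lambda_1$ to $\frac12$ in $\mathbb{R}/\mathbb{Z}$. After the substitution above, (ii) says exactly that this assignment kills every integer relation among the differences. So it descends to a homomorphism $\phi\colon\Lambda\to\frac12\mathbb{Z}/\mathbb{Z}$, and you may set $\alpha_k=\phi(\beta_k)\in\{0,\frac12\}$.
\item If you only clear denominators in a $\mathbb{Q}$-basis, you get a lattice containing $\Lambda$ with finite index. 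Extending $\phi$ to it then needs divisibility of $\mathbb{R}/\mathbb{Z}$, and the targets need not be half-integers. This is harmless, since Kronecker accepts arbitrary targets, but your half-integer claim would be false in that setup.
\item The case ``$1$ lies in the span'' never arises. Choose $c$ with $1/c\notin\mathrm{span}_{\mathbb{Q}}\{\beta_k\}$, which is possible because that span is countable. Then $1,c\beta_1,\dots,c\beta_s$ are $\mathbb{Q}$-independent.
\item Theorem~\ref{Kronecker approximation theorem} now gives an integer $\ell$. With $t=2\pi\ell c$ you get $t(\lambda_i-\lambda_1)\approx 0$ and $t(\mu_j-\lambda_1)\approx\pi$ modulo $2\pi$, with errors scaled by the fixed integer coefficients of the differences in the basis $\beta_k$.
\item Finally, $\ell\neq 0$, because some $\alpha_k=\frac12$.
\end{itemize}
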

In Section~\ref{pretty good}, we use the preceding theorem to study Pair-LPGST on the total graph of complete graphs. Let $G$ denote an $r$-regular graph, where $r \geq 2$. Let $\{v_1, \ldots, v_n\}$ be the  vertex set of $G$ and $\{e_1, \ldots, e_m\}$ be the edge set of $G$. The \textit{incidence matrix} of $G$ is denoted by $R_G:=(r_{ij})_{n \times m}$, where
\[r_{ij} = \left\{ \begin{array}{rl} 1 & \textrm{ if } v_i \textrm{ is incident to } e_j \\
0 & \textrm{ otherwise. } \end{array}\right. \]  
If $\{\textbf{z}_1,\ldots,\textbf{z}_s\}$ is an orthonormal basis of the null-space of $R_G$, then Lemma 2.17 of Bapat~\cite{Bapat2010} implies that 
\[s = \left\{ \begin{array}{rl} m-n & \textrm{ if G is non-bipartite } \\
m-n+1 & \textrm{ if G is bipartite. } \end{array}\right. \]
We consider the ordering $v_1, \ldots, v_n,e_1, \ldots, e_m$ of the vertices of $\mathcal{T}(G)$. Liu and Wang~\cite{TotalGraph2021} determined the Laplacian eigenvalues and the Laplacian eigenprojectors of $\mathcal{T}(G)$. We state it in the form of the following theorem.  
\begin{theorem}\emph{\cite{TotalGraph2021}}\label{Total graph spectra}
Let $\theta_0,\theta_1, \ldots, \theta_p$ be the Laplacian eigenvalues of $G$ with $0=\theta_0<\theta_1 < \cdots < \theta_p$ and $E_{\theta_0}{(G)},E_{\theta_1}{(G)}, \ldots, E_{\theta_p}{(G)}$ be the corresponding Laplacian eigenprojectors of $G$.
Also, let
\begin{align*}
&\theta^{\pm }_j = \frac{r+2+2\theta_j \pm \sqrt{(r+2)^2-4\theta_j}}{2}~~\mathrm{and}\\
&E_{\theta^{\pm }_j}{(\mathcal{T}(G))} = \frac{1}{(2+\theta_j-\theta^{\pm }_j)^2+2r-\theta_j}\begin{pmatrix}
         (2+\theta_j-\theta^{\pm }_j)^2E_{\theta_j}(G) & (2+\theta_j-\theta^{\pm }_j)E_{\theta_j}(G)R_G  \\
         (2+\theta_j-\theta^{\pm }_j){R^T_G}E_{\theta_j}(G)& {R^T_G}E_{\theta_j}(G)R_G  
       \end{pmatrix},
\end{align*}
where $0 \leq j \leq p$.  
\begin{enumerate}
\item[(i)] If $G$ is non-bipartite, then $\theta^{\pm }_j$ are Laplacian eigenvalues of $\mathcal{T}(G)$ with corresponding Laplacian eigenprojectors $E_{\theta^{\pm }_j}{(\mathcal{T}(G))}$ for $0 \leq j \leq p$. Further, $2r+2$ is a Laplacian eigenvalue of $\mathcal{T}(G)$ with corresponding Laplacian eigenprojector 
\begin{align*}
E_{2r+2}{(\mathcal{T}(G))}=\begin{pmatrix}
         \textbf{0}_{n \times n} & \textbf{0}_{n \times m}  \\
         \textbf{0}_{m \times n} & \sum_{i=1}^{m-n} \textbf{z}_i{\textbf{z}}^T_i  
       \end{pmatrix}.
\end{align*}
\item[(ii)] If $G$ is bipartite and $X_1 \cup X_2$ is a bipartition of the vertex set of $G$, then $\theta^{\pm }_j$ are Laplacian eigenvalues of $\mathcal{T}(G)$ with corresponding Laplacian eigenprojectors $E_{\theta^{\pm }_j}{(\mathcal{T}(G))}$ for $0 \leq j \leq p-1$ and $3r$ is a Laplacian eigenvalue of $\mathcal{T}(G)$ with corresponding Laplacian eigenprojector 
\begin{align*}
E_{3r}{(\mathcal{T}(G))}=\begin{pmatrix}
         Q & \textbf{0}_{n \times m}  \\
         \textbf{0}_{m \times n} & \textbf{0}_{m \times m}  
       \end{pmatrix},~\mathrm{where}~
Q=\frac{1}{n}\begin{pmatrix}
        J_{|X_1| \times |X_1|}  &-J_{|X_1| \times |X_2|}   \\
         -J_{|X_2| \times |X_1|} & J_{|X_2| \times |X_2|}  
       \end{pmatrix}.
\end{align*}
Further, $2r+2$ is a Laplacian eigenvalue of $\mathcal{T}(G)$ with corresponding Laplacian eigenprojector 
\begin{align*}
E_{2r+2}{(\mathcal{T}(G))}=\begin{pmatrix}
         \textbf{0}_{n \times n} & \textbf{0}_{n \times m}  \\
         \textbf{0}_{m \times n} & \sum_{i=1}^{m-n+1} \textbf{z}_i{\textbf{z}}^T_i  
       \end{pmatrix}.
\end{align*}
\end{enumerate}
\end{theorem}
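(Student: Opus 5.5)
Throughout, $\theta$ is a Laplacian eigenvalue of the $r$-regular graph $G$ and $\textbf{x}$ a corresponding eigenvector. The plan is to write $L_{\mathcal{T}(G)}$ in block form using the incidence matrix $R_G$. Then we lift the Laplacian eigenvectors of $G$ and the null vectors of $R_G$ to eigenvectors of $\mathcal{T}(G)$, check that these lifts span the whole space, and read off the eigenprojectors by normalizing.

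\textbf{Block form.} Since $G$ is $r$-regular, $A_G = R_GR_G^T - rI_{n\times n}$ and $L_G = 2rI_{n\times n} - R_GR_G^T$. The adjacency matrix of the line graph is $R_G^TR_G - 2I_{m\times m}$. The vertex–edge part of $A_{\mathcal{T}(G)}$ is $R_G$. A vertex of $G$ has degree $r+r=2r$ in $\mathcal{T}(G)$, and an edge has degree $2+2(r-1)=2r$. Hence $\mathcal{T}(G)$ is $2r$-regular and
$$L_{\mathcal{T}(G)}=\begin{pmatrix} rI_{n\times n}+L_G & -R_G\\ -R_G^T & (2r+2)I_{m\times m}-R_G^TR_G\end{pmatrix}.$$

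\textbf{Lifting eigenvectors of $G$.} Take $\textbf{x}$ with $L_G\textbf{x}=\theta\textbf{x}$ and try the vector $(\alpha\textbf{x},R_G^T\textbf{x})$. Use $R_GR_G^T\textbf{x}=(2r-\theta)\textbf{x}$ and $R_G^TR_GR_G^T\textbf{x}=(2r-\theta)R_G^T\textbf{x}$.
\begin{itemize}
\item The top block becomes $(\alpha(r+\theta)-(2r-\theta))\textbf{x}$.
\item The bottom block becomes $(2+\theta-\alpha)R_G^T\textbf{x}$.
\end{itemize}
So the vector is an eigenvector with eigenvalue $\lambda=2+\theta-\alpha$ exactly when $\alpha(r+\theta)-(2r-\theta)=\alpha\lambda$. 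This is the quadratic $\alpha^2+(r-2)\alpha-(2r-\theta)=0$, or equivalently
$$\lambda^2-(r+2+2\theta)\lambda+(\theta^2+(r+1)\theta+2r... )=0,$$
whose roots are the stated $\theta^{\pm}$. The discriminant is $(r+2)^2-4\theta$. It is nonnegative because $\theta\le 2r$ and $(r+2)^2/4-2r=(r-2)^2/4\ge 0$. Roots coincide only when $r=2$ and $\theta=2r$, which is the bipartite top eigenvalue handled separately.

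\textbf{Normalization and projectors.} The squared norm of the lift is $\alpha^2+\textbf{x}^TR_GR_G^T\textbf{x}=\alpha^2+2r-\theta$, with $\alpha=2+\theta-\theta^{\pm}$. Choose an orthonormal basis $\{\textbf{x}_k\}$ of the $\theta_j$-eigenspace, so that $E_{\theta_j}(G)=\sum_k\textbf{x}_k\textbf{x}_k^T$. Summing the rank-one projectors of the normalized lifts gives exactly the stated block form of $E_{\theta^{\pm}_j}(\mathcal{T}(G))$. Lifts coming from different $j$, or from different signs, are orthogonal. This holds because they are eigenvectors of a symmetric matrix for distinct eigenvalues: $\theta^+_j+\theta^-_j=r+2+2\theta_j$ separates different $j$, and the discriminant separates the two signs.

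\textbf{The remaining eigenvalues.} If $R_G\textbf{z}=\textbf{0}$, then $(\textbf{0},\textbf{z})$ is an eigenvector with eigenvalue $2r+2$. This gives $E_{2r+2}(\mathcal{T}(G))$ from the orthonormal basis $\textbf{z}_1,\dots,\textbf{z}_s$, with $s$ supplied by Bapat's lemma.
\begin{itemize}
\item \emph{Non-bipartite case.} $2r$ is not a Laplacian eigenvalue of $G$, so $R_G^T\textbf{x}\neq\textbf{0}$ for every eigenvector. The dimension count is $2n+(m-n)=n+m$, so the listed eigenvectors form a complete eigenbasis.
\item \emph{Bipartite case.} $\theta_p=2r$ is simple with eigenvector the signed indicator of the bipartition, and $R_G^T\textbf{x}=\textbf{0}$ for it. The lift then degenerates: only $(\textbf{x},\textbf{0})$ survives, with eigenvalue $r+\theta_p=3r$ and projector $Q$. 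The count is $2(n-1)+1+(m-n+1)=n+m$.
\end{itemize}

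\textbf{Main obstacle.} The hard part is the bookkeeping for possible coincidences: some $\theta^{\pm}_j$ could equal $2r+2$ (or $3r$). In that case the true eigenprojector is the sum of the listed blocks. I would state the theorem with that convention, as is implicit in the cited result, rather than try to exclude such coincidences. All other steps are direct verification.
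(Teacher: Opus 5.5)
\textbf{Overall.} The paper quotes this theorem from Liu and Wang and gives no proof of its own, so there is nothing in-paper to compare against. Your route is the natural direct verification: the block form of $L_{\mathcal{T}(G)}$ via $R_G$, the lift $\mathbf{x}\mapsto(\alpha\mathbf{x},R_G^T\mathbf{x})$, $\ker R_G$ for $2r+2$, the degenerate bipartite eigenvector for $3r$, and a dimension count. The following pieces are correct:
\begin{itemize}
\item the block form;
\item the $\alpha$-quadratic, its discriminant and its roots;
\item the norm $\alpha^2+2r-\theta$;
\item both dimension counts.
\end{itemize}
The displayed quadratic in $\lambda$ has the wrong constant term. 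It should read $\lambda^2-(r+2+2\theta)\lambda+\theta^2+(r+3)\theta=0$; as a check, $\theta=0$ must give the roots $0$ and $r+2$. This slip is harmless.

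\textbf{The orthogonality argument is wrong.} The identity $\theta^+_j+\theta^-_j=r+2+2\theta_j$ separates the \emph{pairs} $\{\theta^+_j,\theta^-_j\}$, not the individual values. Coincidences $\theta^+_j=\theta^-_k$ with $j\neq k$ do occur. If $\theta_k=r+1$, then $\sqrt{(r+2)^2-4\theta_k}=r$, so $\theta^-_k=r+2=\theta^+_0$ and also $\theta^+_k=2r+2$. The paper's own $K_n$, $n\ge 4$, is an instance: there $\theta_1=n=r+1$, and the list contains $\theta^+_0=\theta^-_1=n+1$ and $\theta^+_1=2n$.

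This also changes your ``main obstacle'' paragraph:
\begin{itemize}
\item a coincidence with $3r$ is impossible for $r\ge 3$, because $\theta^+$ is strictly increasing on $[0,2r]$ with $\theta^+(2r)=3r$, and $\theta^-_j<2r+2<3r$;
\item a coincidence with $2r+2$ happens exactly when $r+1$ is an eigenvalue;
\item cross-branch coincidences, which your argument claims cannot happen, are the ones you missed.
\end{itemize}
Under your own ``sum of the listed blocks'' convention, lifts that share an eigenvalue are not orthogonal for free, so this step needs a real proof.

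\textbf{The repair.} A direct Gram computation works and needs no distinctness. For unit $\mathbf{x}\in\ker(L_G-\theta I)$ and $\mathbf{y}\in\ker(L_G-\theta' I)$,
\[
\langle(\alpha\mathbf{x},R_G^T\mathbf{x}),(\beta\mathbf{y},R_G^T\mathbf{y})\rangle=(\alpha\beta+2r-\theta')\,\mathbf{x}^T\mathbf{y}.
\]
\begin{itemize}
\item It vanishes whenever $\mathbf{x}\perp\mathbf{y}$. This also gives orthonormality of the lifts of an orthonormal basis of one eigenspace, which your ``summing rank-one projectors'' step uses without saying so.
\item For $\mathbf{x}=\mathbf{y}$ with $\alpha\neq\beta$ the two roots, it vanishes by Vieta, since $\alpha\beta=-(2r-\theta)$.
\item Likewise $\langle(\alpha\mathbf{x},R_G^T\mathbf{x}),(\mathbf{0},\mathbf{z})\rangle=\mathbf{x}^TR_G\mathbf{z}=0$ and $\langle(\alpha\mathbf{x},R_G^T\mathbf{x}),(\mathbf{x}_p,\mathbf{0})\rangle=\alpha\,\mathbf{x}^T\mathbf{x}_p=0$.
\end{itemize}
With this in place, the listed matrices are pairwise orthogonal projections, and your dimension count shows they sum to $I$. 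Then $L_{\mathcal{T}(G)}$ is the corresponding weighted sum. The statement holds verbatim when the listed values are pairwise distinct; otherwise the true eigenprojector of a repeated value is the sum of the listed blocks carrying that value.
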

     
The \textit{Cartesian product} of two graphs $G_1$ and $G_2$, denoted $G_1 \square G_2$, is defined as the graph whose vertex set is $V(G_1) \times V(G_2)$ such that two vertices $(g_1,h_1)$ and $(g_2,h_2)$ are adjacent in $G_1 \square G_2$ if exactly one of the following two conditions hold:
\begin{enumerate}
\item[(i)] $g_1=g_2$ and $h_1$ is adjacent to $h_2$ in $G_2$,
\item[(ii)] $g_1$ is adjacent to $g_2$ in $G_1$ and $h_1=h_2$.
\end{enumerate}
The \textit{tensor product} of two graphs $G_1$ and $G_2$, denoted $G_1 \times G_2$, is defined as the graph whose vertex set is $V(G_1) \times V(G_2)$ such that two vertices $(g_1,h_1)$ and $(g_2,h_2)$ are adjacent in $G_1 \times G_2$ if $g_1$ is adjacent to $g_2$ in $G_1$ and $h_1$ is adjacent to $h_2$ in $G_2$. Now we state some results that we use in Section~\ref{Concrete examples} to construct families of total graphs exhibiting Pair-LPGST.

\begin{theorem}\emph{\cite{CayleyGraphProduct2016}}\label{Cayley Graph Cartesian Product}
Let $G_1$ and $G_2$ be two Cayley graphs with $G_1=\mathrm{Cay}(\Gamma_1,S_1)$ and $G_2=\mathrm{Cay}(\Gamma_2,S_2)$. Then $G_1 \square G_2$ is also a Cayley graph given by $G_1 \square G_2 = \mathrm{Cay}(\Gamma_1 \times \Gamma_2, (S_1 \times \textbf{1}) \cup ({\textbf{1} \times S_2}))$.
\end{theorem}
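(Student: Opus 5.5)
The plan is to identify $V(G_1 \square G_2)=\Gamma_1\times\Gamma_2$ with $V(G_1)\times V(G_2)$ directly and check that the two adjacency relations coincide. I will use the convention that in $\mathrm{Cay}(\Gamma,S)$ two elements $x,y$ are adjacent if and only if $yx^{-1}\in S$. Here $S$ is inverse-closed and does not contain the identity $\textbf{1}$, which is what makes the Cayley graph simple and undirected. If the paper's convention is $x^{-1}y\in S$ instead, the same argument goes through verbatim.

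\textbf{Step 1: $S$ is a valid connection set.} Set $S=(S_1\times\textbf{1})\cup(\textbf{1}\times S_2)\subseteq\Gamma_1\times\Gamma_2$. The identity $(\textbf{1},\textbf{1})$ is not in $S$, since $\textbf{1}\notin S_1$ and $\textbf{1}\notin S_2$. Moreover $S$ is inverse-closed, because $(s,\textbf{1})^{-1}=(s^{-1},\textbf{1})$ with $s^{-1}\in S_1$, and similarly for the second part. Hence $\mathrm{Cay}(\Gamma_1\times\Gamma_2,S)$ is a simple undirected graph on the vertex set $V(G_1)\times V(G_2)$.

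\textbf{Step 2: adjacency in the Cayley graph.} Take $(g_1,h_1),(g_2,h_2)\in\Gamma_1\times\Gamma_2$. Since the group operation is componentwise,
\[(g_2,h_2)(g_1,h_1)^{-1}=(g_2g_1^{-1},\,h_2h_1^{-1}).\]
This element lies in $S_1\times\textbf{1}$ if and only if $h_1=h_2$ and $g_2g_1^{-1}\in S_1$, that is, $h_1=h_2$ and $g_1\sim g_2$ in $G_1$. It lies in $\textbf{1}\times S_2$ if and only if $g_1=g_2$ and $h_1\sim h_2$ in $G_2$.

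\textbf{Step 3: matching the two definitions.} The two parts of $S$ are disjoint, because an element of both would force $\textbf{1}\in S_1$. So the two cases in Step 2 cannot hold simultaneously. This matches the "exactly one" clause in the definition of $G_1\square G_2$. Therefore the identity map on $\Gamma_1\times\Gamma_2$ is an isomorphism from $G_1\square G_2$ onto $\mathrm{Cay}(\Gamma_1\times\Gamma_2,S)$.

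There is no genuine obstacle; the argument is a direct verification. The only care needed is to fix the left/right convention for Cayley graphs consistently and to note that $\textbf{1}\notin S_i$, which guarantees both that the union is disjoint and that the product graph has no loops.
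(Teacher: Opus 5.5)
Your proof is correct. Computing $(g_2,h_2)(g_1,h_1)^{-1}=(g_2g_1^{-1},h_2h_1^{-1})$ componentwise and reading off membership in $S_1\times\textbf{1}$ or $\textbf{1}\times S_2$ reproduces exactly the two adjacency clauses of $G_1\square G_2$, and the same computation works under the other (left) convention.

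The paper gives no proof of its own here; it simply quotes the result from the cited reference, and your direct verification is the standard argument behind it. One small remark: the ``exactly one'' clause in the definition of $G_1\square G_2$ holds automatically. Clause (i) forces $g_1=g_2$, while clause (ii) forces $g_1\neq g_2$ because $G_1$ has no loops. So your disjointness observation in Step 3 is consistent with the definition but not actually needed for the identification.
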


\begin{theorem}\emph{\cite{CayleyGraphProduct2016}}\label{Cayley Graph Tensor Product}
Let $G_1$ and $G_2$ be two Cayley graphs with $G_1=\mathrm{Cay}(\Gamma_1,S_1)$ and $G_2=\mathrm{Cay}(\Gamma_2,S_2)$. Then $G_1 \times G_2$ is also a Cayley graph given by $G_1 \times G_2 = \mathrm{Cay}(\Gamma_1 \times \Gamma_2, S_1 \times S_2)$. 
\end{theorem}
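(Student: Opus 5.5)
The plan is to identify vertex sets and then check that the two adjacency relations coincide. Both $G_1 \times G_2$ and $\mathrm{Cay}(\Gamma_1 \times \Gamma_2, S_1 \times S_2)$ have vertex set $\Gamma_1 \times \Gamma_2$, since $V(G_i)=\Gamma_i$. I will fix the convention used for Cayley graphs in the paper: $x$ is adjacent to $y$ in $\mathrm{Cay}(\Gamma,S)$ if and only if $yx^{-1}\in S$. Here $S$ is inverse-closed and does not contain the identity $\textbf{1}$. If the other convention $x^{-1}y \in S$ is intended, the argument is identical with the products reversed.

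First I will check that $S_1 \times S_2$ is a legitimate connection set for $\Gamma_1 \times \Gamma_2$.

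\begin{enumerate}
\item[(a)] Since $\textbf{1} \notin S_1$, the identity $(\textbf{1},\textbf{1})$ of $\Gamma_1 \times \Gamma_2$ does not lie in $S_1 \times S_2$. Hence the Cayley graph has no loops.
\item[(b)] Inverses in a direct product are taken componentwise: $(s_1,s_2)^{-1}=(s_1^{-1},s_2^{-1})$. So inverse-closedness of $S_1$ and $S_2$ gives inverse-closedness of $S_1 \times S_2$. Hence the Cayley graph is undirected and simple, as required.
\end{enumerate}

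Next I will compare adjacencies. Take vertices $(g_1,g_2)$ and $(h_1,h_2)$. By the definition of the tensor product, they are adjacent in $G_1 \times G_2$ if and only if $g_1 \sim h_1$ in $G_1$ and $g_2 \sim h_2$ in $G_2$. By the Cayley convention, this holds if and only if $h_1g_1^{-1}\in S_1$ and $h_2g_2^{-1}\in S_2$. Using componentwise multiplication in $\Gamma_1\times\Gamma_2$,
\[
(h_1,h_2)(g_1,g_2)^{-1}=(h_1g_1^{-1},\,h_2g_2^{-1}).
\]
So the two conditions together are equivalent to $(h_1,h_2)(g_1,g_2)^{-1}\in S_1\times S_2$. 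This is exactly adjacency in $\mathrm{Cay}(\Gamma_1 \times \Gamma_2, S_1\times S_2)$. Therefore the identity map on $\Gamma_1\times\Gamma_2$ is a graph isomorphism, which proves the claim.

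There is no substantial obstacle. The only point requiring care is applying the left/right multiplication convention consistently in both factors and in the product. One should also observe that a product of two elements, each different from the identity, is automatically non-identity; this is what ensures the tensor product, unlike the Cartesian product, needs no extra terms such as $S_1\times\{\textbf{1}\}$.
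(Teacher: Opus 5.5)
Your argument is correct and is the standard direct verification; the paper gives no proof of its own and simply cites this fact from the literature. Both graphs live on $\Gamma_1\times\Gamma_2$, and $(h_1,h_2)(g_1,g_2)^{-1}=(h_1g_1^{-1},h_2g_2^{-1})$ lies in $S_1\times S_2$ exactly when both coordinate adjacencies hold. One small quibble: your closing remark misattributes why no terms like $S_1\times\{\textbf{1}\}$ are needed. That comes from tensor adjacency requiring adjacency in both coordinates, whereas the non-identity observation only rules out loops.
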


\begin{theorem}\emph{\cite{Canul2010}}\label{PST and Cartesian product}
If the graphs $G_1$ and $G_2$ exhibit PST at $t$, then $G_1 \square G_2$ also exhibits PST at $t$.
\end{theorem}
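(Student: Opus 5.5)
The statement is Theorem~\ref{PST and Cartesian product}: if $G_1$ and $G_2$ both exhibit PST at the same time $t$, then $G_1 \square G_2$ also exhibits PST at $t$. The proof rests on one algebraic fact. The adjacency matrix of the Cartesian product is a Kronecker sum,
$$A_{G_1 \square G_2} = A_{G_1} \otimes I + I \otimes A_{G_2},$$
where the identity matrices have the orders of $G_2$ and $G_1$ respectively, with vertices $(g,h)$ indexed lexicographically.

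The first step is to justify this formula. The two summands $A_{G_1}\otimes I$ and $I\otimes A_{G_2}$ have disjoint supports, and they encode conditions (ii) and (i) of the definition of $G_1 \square G_2$.

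The second step uses commutativity. Since $(A_{G_1}\otimes I)(I\otimes A_{G_2}) = A_{G_1}\otimes A_{G_2} = (I\otimes A_{G_2})(A_{G_1}\otimes I)$, the two summands commute, so the exponential splits:
$$H_{G_1\square G_2}(t) = \exp(-\mathbf{i}tA_{G_1}\otimes I)\,\exp(-\mathbf{i}t\, I\otimes A_{G_2}).$$
Expanding each factor as a power series and using $(X\otimes I)^j = X^j\otimes I$ gives $\exp(-\mathbf{i}tA_{G_1}\otimes I) = H_{G_1}(t)\otimes I$, and similarly for the other factor. The mixed-product rule then yields
$$H_{G_1\square G_2}(t) = H_{G_1}(t)\otimes H_{G_2}(t).$$

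The third step reads off the transfer. Suppose $G_1$ has PST from $a$ to $b$ and $G_2$ has PST from $c$ to $d$, both at time $t$. Writing $\mathbf{e}_{(a,c)} = \mathbf{e}_a\otimes\mathbf{e}_c$, we get
$$\mathbf{e}_{(a,c)}^T\bigl(H_{G_1}(t)\otimes H_{G_2}(t)\bigr)\mathbf{e}_{(b,d)} = \bigl(\mathbf{e}_a^TH_{G_1}(t)\mathbf{e}_b\bigr)\bigl(\mathbf{e}_c^TH_{G_2}(t)\mathbf{e}_d\bigr).$$
This is a product of two complex numbers of modulus $1$, so it has modulus $1$. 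Since $a\neq b$, the vertices $(a,c)$ and $(b,d)$ are distinct, so this is PST in $G_1\square G_2$ at $t$.

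The same argument works verbatim for the Laplacian, because $L_{G_1\square G_2} = L_{G_1}\otimes I + I\otimes L_{G_2}$; this version is likely the one used later. There is no serious obstacle. The only point needing care is the commutation step, since $\exp(X+Y)=\exp(X)\exp(Y)$ holds only for commuting $X$ and $Y$, and commutativity here follows directly from the mixed-product rule.
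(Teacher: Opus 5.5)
Your proposal is correct and is the standard argument for this result: the Kronecker-sum form of $A_{G_1\square G_2}$ and the commutation of its two summands give $H_{G_1\square G_2}(t)=H_{G_1}(t)\otimes H_{G_2}(t)$, and the transfer amplitude then factors. The paper does not prove this theorem; it cites it from the literature, and this factorization is how it is established there. Your remark that the argument carries over verbatim to $L_{G_1\square G_2}=L_{G_1}\otimes I+I\otimes L_{G_2}$ is worth keeping, because the paper later applies the result in the Laplacian setting; for regular graphs the two settings are equivalent anyway.
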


\begin{theorem}\emph{\cite{GraphProduct2011}}\label{PST and tensor product}
Let $G_1$ be a graph exhibiting PST at $t$ and $t\lambda$ is an integer multiple of $\pi$, for all eigenvalues $\lambda$ of $G_1$. Also, let $G_2$ be a circulant graph with odd eigenvalues. Then the graph $G_1 \times G_2$ exhibits PST at $t$.
\end{theorem}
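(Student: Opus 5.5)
The plan is to compute the transition matrix of $G_1 \times G_2$ directly from the spectral decompositions of the two factors. The key point is that the hypotheses force this matrix to collapse to $H_{G_1}(t) \otimes I$. Perfect state transfer in $G_1$ between $a$ and $b$ then lifts to perfect state transfer in $G_1 \times G_2$ between $(a,g)$ and $(b,g)$ for any vertex $g$ of $G_2$.

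First, with respect to the product ordering of $V(G_1)\times V(G_2)$, the adjacency matrix of the tensor product is $A_{G_1 \times G_2} = A_{G_1} \otimes A_{G_2}$; this is immediate from the definition of $G_1 \times G_2$. Write the spectral decompositions
$$A_{G_1}=\sum_{\lambda}\lambda E_\lambda, \qquad A_{G_2}=\sum_{\mu}\mu F_\mu,$$
where $\lambda$ and $\mu$ range over the distinct eigenvalues. Since $(E_\lambda\otimes F_\mu)(E_{\lambda'}\otimes F_{\mu'}) = \delta_{\lambda\lambda'}\delta_{\mu\mu'}\,E_\lambda\otimes F_\mu$ and $\sum_{\lambda,\mu}E_\lambda\otimes F_\mu = I$, the matrices $E_\lambda\otimes F_\mu$ form a resolution of the identity. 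Hence
$$H_{G_1\times G_2}(t)=\sum_{\lambda,\mu} e^{-\mathbf{i}t\lambda\mu}\, E_\lambda\otimes F_\mu .$$
Here it does not matter whether the products $\lambda\mu$ are distinct, since equal products simply merge their projectors.

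Next, use the arithmetic hypotheses to simplify the phases. By assumption $t\lambda = k_\lambda \pi$ for some integer $k_\lambda$, and every eigenvalue $\mu$ of $G_2$ is an odd integer. So $\mu - 1$ is even and
$$e^{-\mathbf{i}t\lambda\mu} = e^{-\mathbf{i}t\lambda}\, e^{-\mathbf{i}k_\lambda\pi(\mu-1)} = e^{-\mathbf{i}t\lambda}.$$
Substituting and summing over $\mu$ gives
$$H_{G_1\times G_2}(t)=\Big(\sum_\lambda e^{-\mathbf{i}t\lambda}E_\lambda\Big)\otimes\Big(\sum_\mu F_\mu\Big)=H_{G_1}(t)\otimes I.$$
If $G_1$ has PST from $a$ to $b$ at $t$, then for any vertex $g$ of $G_2$ the $((a,g),(b,g))$ entry of $H_{G_1\times G_2}(t)$ equals $(H_{G_1}(t))_{ab}$, whose modulus is $1$. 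This is PST in $G_1\times G_2$ at $t$.

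The main obstacle is the phase-collapse step, and it is exactly where the two hypotheses are used. We need $t\lambda\in\pi\mathbb{Z}$ for \emph{every} eigenvalue of $G_1$, not only those in the eigenvalue support of $a$; otherwise the identity $H_{G_1\times G_2}(t)=H_{G_1}(t)\otimes I$ holds only on the relevant subspace. A more economical version of the argument would restrict to the support of $\mathbf{e}_a$, but the global hypothesis makes the clean tensor identity available. We also need the eigenvalues of $G_2$ to be odd \emph{integers}, so that $k_\lambda(\mu-1)$ is even. The circulant assumption plays no further role beyond guaranteeing a well-understood spectrum, so I would note that the argument uses only the oddness and integrality of the eigenvalues of $G_2$.
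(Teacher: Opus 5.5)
Your proof is correct and complete. The paper gives no proof of this statement; it quotes it from \cite{GraphProduct2011}. Your argument is the natural one: since $A_{G_1\times G_2}=A_{G_1}\otimes A_{G_2}$, the two hypotheses $t\lambda\in\pi\mathbb{Z}$ and $\mu$ odd give $e^{-\mathbf{i}t\lambda\mu}=e^{-\mathbf{i}t\lambda}$, so $H_{G_1\times G_2}(t)=H_{G_1}(t)\otimes I$, and PST from $a$ to $b$ lifts to PST from $(a,g)$ to $(b,g)$.

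Your two side remarks also hold up.
\begin{itemize}
\item You work with spectral idempotents rather than an explicit eigenbasis of the circulant. This makes clear that only the odd integrality of the spectrum of $G_2$ is used, not the circulant structure.
\item The refinement to the eigenvalue support is valid. If $t\lambda\in\pi\mathbb{Z}$ holds only for $\lambda$ in the eigenvalue support of $\mathbf{e}_a$, you still get $H_{G_1\times G_2}(t)(\mathbf{e}_a\otimes\mathbf{e}_g)=(H_{G_1}(t)\mathbf{e}_a)\otimes\mathbf{e}_g$, and that suffices for PST.
\end{itemize}
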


\begin{theorem}\emph{\cite{Coutinho2015}}\label{Distance regular graphs}
The cocktail party graph $\overline{\cup_m K_2}$ exhibits PST at $\frac{\pi}{2}$, where
$m$ is an even integer.
\end{theorem}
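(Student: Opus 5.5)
The plan is to compute the transition matrix $H_G(t)=\exp(-\mathbf{i}tA_G)$ of the cocktail party graph $G=\overline{\cup_m K_2}$ in closed form and evaluate it at $t=\frac{\pi}{2}$. The graph has $2m$ vertices and is $K_{2m}$ with a perfect matching removed. Let $P$ be the permutation matrix of the fixed-point-free involution that swaps the two ends of each removed matching edge. Then
$$A_G = J_{2m\times 2m} - I_{2m\times 2m} - P.$$
The three matrices $J$, $I$ and $P$ pairwise commute: $PJ=JP=J$ because $P$ is a permutation matrix. So the exponential factors as
$$H_G(t)=\exp(-\mathbf{i}tJ)\,\exp(\mathbf{i}tI)\,\exp(\mathbf{i}tP).$$
I will treat the three factors separately, each by the identity it satisfies.

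The three factors are as follows.
\begin{enumerate}
\item[(a)] Since $J^2=2mJ$, we get $\exp(-\mathbf{i}tJ)=I+\frac{e^{-2m\mathbf{i}t}-1}{2m}J$.
\item[(b)] Trivially, $\exp(\mathbf{i}tI)=e^{\mathbf{i}t}I$.
\item[(c)] Since $P^2=I$, we get $\exp(\mathbf{i}tP)=\cos t\, I+\mathbf{i}\sin t\, P$.
\end{enumerate}
At $t=\frac{\pi}{2}$ we have $e^{-2m\mathbf{i}t}=e^{-m\pi\mathbf{i}}=1$ precisely because $m$ is even, so the $J$-factor collapses to $I$. This is the only place the parity hypothesis enters, and it is the step to check carefully. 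For odd $m$ a term $-\frac{1}{m}J$ survives and destroys PST. The remaining factors give $e^{\mathbf{i}\pi/2}I=\mathbf{i}I$ and $\mathbf{i}P$. Hence $H_G(\frac{\pi}{2})=\mathbf{i}\cdot\mathbf{i}P=-P$.

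Consequently, for every vertex $a$, with $b$ its non-neighbour (the vertex matched to $a$), we get $|{\textbf{e}^{2m}_a}^TH_G(\frac{\pi}{2})\textbf{e}^{2m}_b|=1$. This is perfect state transfer at $\frac{\pi}{2}$.

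As a consistency check, the same conclusion follows spectrally. The eigenvalues of $A_G$ are $2m-2$ (on $\textbf{1}_{2m}$), $0$ (on $P$-symmetric vectors orthogonal to $\textbf{1}_{2m}$) and $-2$ (on $P$-antisymmetric vectors). Their differences are $2m-2$ and $2$, and these satisfy the usual ratio and parity conditions at $t=\frac{\pi}{2}$ when $m$ is even. Since $G$ is regular, $L_G=(2m-2)I-A_G$, so the identical computation also gives LPST at $\frac{\pi}{2}$. This is the form in which the result will be used in Section~\ref{Concrete examples}. I expect no genuine obstacle beyond verifying the commutation and the parity of $m$.
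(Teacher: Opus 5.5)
\textbf{Verdict.} The paper gives no proof of this statement; it is imported from the literature. Your direct computation is a correct and complete proof. The decomposition $A_G=J-I-P$ is right, and $J$, $I$, $P$ pairwise commute. The three closed forms are correct. At $t=\frac{\pi}{2}$ the factor $\exp(-\mathbf{i}tJ)$ collapses to $I$ exactly when $m$ is even, which gives $H_G(\frac{\pi}{2})=-P$.

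\textbf{Comparison with the spectral route.} The usual alternative is to check strong cospectrality of $\textbf{e}^{2m}_a$ and $\textbf{e}^{2m}_b$ and then the phase conditions on eigenvalue differences. Your route instead produces the whole transition matrix at once. It shows that every vertex has PST to its matched vertex at the same time, and it isolates the single place where the parity of $m$ enters. Your LPST remark is also right: $U_G(t)=e^{-\mathbf{i}(2m-2)t}\,\overline{H_G(t)}$, so $U_G(\frac{\pi}{2})=P$ for even $m$. This is the form the paper actually relies on in its examples.

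\textbf{Error in the consistency check.} The eigenspaces are swapped. For $v\perp\mathbf{1}$ with $Pv=v$ you get $A_Gv=-v-Pv=-2v$, while $Pv=-v$ gives $A_Gv=0$. So $0$ is the eigenvalue on the $P$-antisymmetric vectors, with multiplicity $m$. The eigenvalue $-2$ lives on the $P$-symmetric vectors orthogonal to $\mathbf{1}$, with multiplicity $m-1$.

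Since $\textbf{e}^{2m}_b=P\textbf{e}^{2m}_a$, the projections of $\textbf{e}^{2m}_a$ and $\textbf{e}^{2m}_b$ agree on the eigenspaces for $2m-2$ and $-2$, and are opposite on the eigenspace for $0$. The phase conditions at $t=\frac{\pi}{2}$ are therefore $m\pi\in 2\pi\mathbb{Z}$ and $(m-1)\pi\in \pi+2\pi\mathbb{Z}$. Both are equivalent to $m$ being even.

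With your labelling, the same check would instead require $(m-1)\pi\in 2\pi\mathbb{Z}$, i.e.\ $m$ odd. As written, then, the check contradicts your main computation rather than confirming it. The main argument never uses these labels, so it stands; fix the check or drop it.
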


\begin{theorem}\emph{\cite{GaojunLuo2022}}\label{PST implies Pair PST}
If a Cayley graph exhibits LPST at $t$, then it also exhibits Pair-LPST at $t$.
\end{theorem}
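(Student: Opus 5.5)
The plan is to use the automorphisms of a Cayley graph to turn vertex state transfer into pair state transfer. Let $G=\mathrm{Cay}(\Gamma,S)$ with vertex set $\Gamma$, where $g\sim h$ if and only if $g^{-1}h\in S$. (For the other convention $hg^{-1}\in S$, use right translations instead.) For $c\in\Gamma$, the left translation $x\mapsto cx$ preserves $g^{-1}h$, so it is an automorphism of $G$. Let $P_c$ be its permutation matrix, with $P_c\textbf{e}_x=\textbf{e}_{cx}$. Then $P_c$ commutes with $A_G$. Since $G$ is $|S|$-regular, $L_G=|S|I-A_G$, so $P_c$ also commutes with $L_G$ and hence with $U_G(t)=\exp(-\mathbf{i}tL_G)$.

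Suppose $G$ exhibits LPST between $a$ and $b$ at $t$. Then $|\textbf{e}_a^TU_G(t)\textbf{e}_b|=1$. Because $U_G(t)$ is unitary and symmetric, this gives $U_G(t)\textbf{e}_a=\gamma\,\textbf{e}_b$ for some $\gamma$ with $|\gamma|=1$. For any $c\in\Gamma$,
$$U_G(t)\textbf{e}_{ca}=U_G(t)P_c\textbf{e}_a=P_cU_G(t)\textbf{e}_a=\gamma\,\textbf{e}_{cb}.$$
Subtracting, $U_G(t)(\textbf{e}_a-\textbf{e}_{ca})=\gamma(\textbf{e}_b-\textbf{e}_{cb})$. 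Therefore
$$\tfrac12(\textbf{e}_b-\textbf{e}_{cb})^TU_G(t)(\textbf{e}_a-\textbf{e}_{ca})=\tfrac{\gamma}{2}\,\|\textbf{e}_b-\textbf{e}_{cb}\|^2=\gamma,$$
which has modulus $1$. Transposing via $U_G(t)^T=U_G(t)$ gives the statement in the orientation used in the definition of Pair-LPST.

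It remains to choose $c$ so that $\textbf{e}_a-\textbf{e}_{ca}$ and $\textbf{e}_b-\textbf{e}_{cb}$ are genuine, linearly independent pair states. We need $c\neq\textbf{1}$, so that $ca\neq a$. Two pair states $\textbf{e}_u-\textbf{e}_v$ and $\textbf{e}_x-\textbf{e}_y$ are linearly dependent exactly when $\{u,v\}=\{x,y\}$. Since $a\neq b$, this dependence happens only if $ca=b$ and $cb=a$. So it suffices to take $c\notin\{\textbf{1},ba^{-1}\}$, which is possible whenever $|\Gamma|\geq 3$.

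The one obstacle is this degenerate bookkeeping. When $|\Gamma|=2$ the graph is $K_2$, which has only one pair state up to sign, so Pair-LPST between linearly independent pair states is vacuous there. I would state the result for $|\Gamma|\ge 3$, which is the setting in which it is applied later in the paper. If one wants edge states, one can further take $c$ with $ca\sim a$, for instance $c=asa^{-1}$ with $s\in S$. Then $cb=asa^{-1}b$ need not be adjacent to $b$ unless $\Gamma$ is abelian, so the argument gives Edge-LPST only in that special case and Pair-LPST in general.
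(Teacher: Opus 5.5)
Your proof is correct. The paper gives no proof of this statement; it is quoted from Luo et al. Your argument is the natural one: left translations commute with $U_G(t)$, so $U_G(t)\mathbf{e}_{ca}=\gamma\,\mathbf{e}_{cb}$ for every $c$, and then you subtract. The independence condition $c\notin\{\mathbf{1},ba^{-1}\}$ is exactly right.

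Your caveat about $|\Gamma|=2$ is genuine. $K_2=\mathrm{Cay}(\mathbb{Z}_2,\{1\})$ has LPST at $\pi/2$ but only one pair state up to sign, so the statement as written needs $|\Gamma|\ge 3$. This is harmless for the paper, since all its applications have at least four vertices.

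One correction to your closing aside: Edge-LPST follows for every Cayley graph, not only abelian ones.
\begin{itemize}
\item Every vertex has the form $ca$, so you actually have $U_G(t)=\gamma P_\sigma$ with $\sigma(x)=xa^{-1}b$.
\item $P_\sigma$ commutes with $L_G=|S|I-A_G$, hence with $A_G$, so $\sigma$ is a graph automorphism. Equivalently, $h^{-1}Sh=S$ for $h=a^{-1}b$.
\item Hence, for $c=asa^{-1}$ with $s\neq a^{-1}b$, the adjacency $as\sim a$ gives $cb=\sigma(as)\sim\sigma(a)=b$ automatically.
\end{itemize}
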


\section{Laplacian perfect pair state transfer on total graphs}\label{perfect}
In this section, we explore Pair-LPST on total graphs of regular graphs. Let $G$ be an $r$-regular connected graph. If $r=1$, then $G$ is the path on two vertices and so $\mathcal{T}(G)$ is the cycle on three vertices. Therefore Theorem 7.3 of Chen and Godsil~\cite{QChen2020} implies that the graph $\mathcal{T}(G)$ does not exhibit Pair-LPST. In what follows, $G$ denotes an $r$-regular connected graph on $n$ vertices and $m$ edges such that $r \geq 3$. Recall that if $\{v_1, \ldots, v_n\}$ is the vertex set of $G$ and $\{e_1, \ldots, e_m\}$ is the edge set of $G$, then we consider the ordering $v_1, \ldots, v_n,e_1, \ldots, e_m$ of the vertices of the graph $\mathcal{T}(G)$. Let $a$ be a vertex of $\mathcal{T}(G)$ such that $a$ is a vertex of $G$. Then the vertex state of $a$ in $\mathcal{T}(G)$, denoted $\textbf{e}^{n+m}_a$, is the column vector
$$\begin{pmatrix}
     \textbf{e}^n_a \\
     \textbf{0}_m  
\end{pmatrix}.$$
Let $a$ be a vertex of $\mathcal{T}(G)$ such that $a$ is an edge of $G$. Then the vertex state of $a$ in $\mathcal{T}(G)$, denoted $\textbf{e}^{n+m}_a$, is the column vector
$$\begin{pmatrix}
     \textbf{0}_n \\
     \textbf{e}^m_a 
\end{pmatrix}.$$
Using some ideas from Jiang et al.~\cite{PairStateTransferQGraph2025}, we prove the following lemma. In what follows, $\Phi_{ab}$ denotes the Laplacian eigenvalue support of the pair state $\textbf{e}^{n+m}_a - \textbf{e}^{n+m}_b$ of the graph $\mathcal{T}(G)$.
\begin{lema}\label{perfect state transfer lemma non-bipartite}
Let $G$ be an $r$-regular connected non-bipartite graph such that $r \geq 3$. If $a$ and $b$ are vertices of $G$; or $a$ and $b$ are edges of $G$, then the following two conditions hold.
\begin{enumerate}
\item[(i)] $\theta^{+}_j \in \Phi_{ab}$ if and only if $\theta^{-}_j \in \Phi_{ab}$ for $0 \leq j \leq p$.
\item[(ii)] Let $\textbf{e}^{n+m}_a - \textbf{e}^{n+m}_b$ and $\textbf{e}^{n+m}_c - \textbf{e}^{n+m}_d$ be Laplacian strongly cospectral pair states of $\mathcal{T}(G)$. Then 
$\theta^{+}_j \in \Phi^{+}_{ab,cd}$ if and only if $\theta^{-}_j \in \Phi^{+}_{ab,cd}$ for $0 \leq j \leq p$. Further, $\theta^{+}_j \in \Phi^{-}_{ab,cd}$ if and only if $\theta^{-}_j \in \Phi^{-}_{ab,cd}$ for $0 \leq j \leq p$.
\end{enumerate} 
\end{lema}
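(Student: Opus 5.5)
The plan is to compute $E_{\theta^{\pm}_j}(\mathcal{T}(G))$ applied to the pair state directly from Theorem~\ref{Total graph spectra}(i), and to compare the results for the two signs. Write $c^{\pm}_j := 2+\theta_j-\theta^{\pm}_j = \frac{1}{2}\bigl(2-r\mp\sqrt{(r+2)^2-4\theta_j}\bigr)$ and $N^{\pm}_j := (c^{\pm}_j)^2+2r-\theta_j$. For a general vector $(\mathbf{x}_1;\mathbf{x}_2)$ we get
\[
E_{\theta^{\pm}_j}(\mathcal{T}(G))\begin{pmatrix}\mathbf{x}_1\\ \mathbf{x}_2\end{pmatrix}=\frac{1}{N^{\pm}_j}\begin{pmatrix} c^{\pm}_j\,\mathbf{u}^{\pm}\\ R_G^T\mathbf{u}^{\pm}\end{pmatrix},\qquad \mathbf{u}^{\pm}=E_{\theta_j}(G)\bigl(c^{\pm}_j\mathbf{x}_1+R_G\mathbf{x}_2\bigr).
\]

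First I will record some facts about these constants. Each $\theta_j$ lies in $[0,2r]$, and $G$ is regular, connected and non-bipartite, so $\theta_j<2r$. From this:
\begin{itemize}
\item $c^{\pm}_j=0$ forces $(r+2)^2-4\theta_j=(r-2)^2$, i.e.\ $\theta_j=2r$, which is excluded. So $c^{\pm}_j\neq 0$.
\item Because $r\ge 3$, we have $(r+2)^2-4\theta_j>(r+2)^2-8r=(r-2)^2>0$. Hence $\theta^+_j\neq\theta^-_j$ and $c^+_j\neq c^-_j$.
\item $c^+_j+c^-_j=2-r\neq 0$, so $c^+_j\neq -c^-_j$.
\end{itemize}

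For part (i), take first $a,b$ vertices of $G$, so $\mathbf{x}_1=\mathbf{e}^n_a-\mathbf{e}^n_b$ and $\mathbf{x}_2=\mathbf{0}$. Then $\mathbf{u}^{\pm}=c^{\pm}_jE_{\theta_j}(G)\mathbf{x}_1$. Since $c^{\pm}_j\neq0$, the projection is nonzero exactly when $E_{\theta_j}(G)(\mathbf{e}^n_a-\mathbf{e}^n_b)\neq\mathbf{0}$, a condition independent of the sign. If $a,b$ are edges, then $\mathbf{x}_1=\mathbf{0}$ and $\mathbf{u}=E_{\theta_j}(G)R_G\mathbf{x}_2$ for both signs. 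The second block vanishes only if $\mathbf{x}_2^TR_G^TE_{\theta_j}(G)R_G\mathbf{x}_2=\|E_{\theta_j}(G)R_G\mathbf{x}_2\|^2=0$. So again membership in $\Phi_{ab}$ is equivalent to the sign-free condition $E_{\theta_j}(G)R_G(\mathbf{e}^m_a-\mathbf{e}^m_b)\neq\mathbf{0}$.

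For part (ii), let $\mathbf{e}^{n+m}_c-\mathbf{e}^{n+m}_d$ be strongly cospectral with $\mathbf{e}^{n+m}_a-\mathbf{e}^{n+m}_b$, and let $\theta^+_j$ lie in the support with sign $\sigma$. If $c,d$ are of the same type as $a,b$, the displayed formula makes the projections at $\theta^{\pm}_j$ equal to a fixed nonzero matrix applied to $E_{\theta_j}(G)\mathbf{x}_1$ (respectively $E_{\theta_j}(G)R_G\mathbf{x}_2$). The matrix depends on the sign, but the vector being compared does not. Hence the relation with $\sigma$ at $\theta^+_j$ is equivalent to the same relation at $\theta^-_j$.

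The remaining case is when $c,d$ are not both of the same type as $a,b$, for example $a,b$ vertices and $\mathbf{y}=\mathbf{e}^{n+m}_c-\mathbf{e}^{n+m}_d=(\mathbf{y}_1;\mathbf{y}_2)$. Comparing first blocks at $\theta^+_j$ gives $c^+_jE_{\theta_j}\mathbf{x}_1=\sigma(c^+_jE_{\theta_j}\mathbf{y}_1+E_{\theta_j}R_G\mathbf{y}_2)$. If also $\theta^-_j\in\Phi^{\sigma'}$, the same comparison with $c^-_j$ holds. Subtracting suitable multiples and using $c^+_j\neq\pm c^-_j$ should force $\sigma=\sigma'$, or else $E_{\theta_j}(G)\mathbf{x}_1=\mathbf{0}$, which contradicts $\theta^{\pm}_j\in\Phi_{ab}$.

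The main obstacle is eigenvalue coincidences. The eigenprojectors in Theorem~\ref{Total graph spectra} are the true ones only if all the $\theta^{\pm}_j$ and $2r+2$ are distinct.
\begin{itemize}
\item Solving $\theta^{\pm}_j=2r+2$ gives $\theta_j\in\{2r,r+1\}$. So $\theta^+_j=2r+2$ exactly when $r+1$ is a Laplacian eigenvalue of $G$. In that case the eigenprojector for $2r+2$ is the sum of $E_{\theta^+_j}(\mathcal{T}(G))$ and the $\mathbf{z}_i\mathbf{z}_i^T$ block.
\item A coincidence $\theta^+_j=\theta^-_k$ with $j\ne k$ must also be ruled out or handled.
\end{itemize}
For vertex pair states the extra $\mathbf{z}$-block contributes nothing, so the argument goes through. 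For edge pair states I would try to show that it does not break the ``iff''. Failing that, I expect the intended proof to interpret $\theta^{\pm}_j$ through these projector formulas, or implicitly to assume that $r+1$ is not an eigenvalue; I would check this against how the lemma is used later.
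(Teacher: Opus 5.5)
\paragraph{Verdict.} Part (i) is fine. In part (ii) the same-type case is fine, and so is the case where $\mathbf{e}^{n+m}_c-\mathbf{e}^{n+m}_d$ has the opposite pure type. For that case you get $(c^+_j+c^-_j)E_{\theta_j}(G)\mathbf{x}_1=\mathbf 0$ or $(c^+_j-c^-_j)E_{\theta_j}(G)\mathbf{x}_1=\mathbf 0$, exactly as in the paper's Cases 1 and 2, so it is vacuous for every $j$. There is, however, a genuine gap in the mixed case.

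\paragraph{The gap: the mixed case.} Take $\mathbf{y}=(\mathbf{e}^n_c;-\mathbf{e}^m_d)$ with $c$ a vertex and $d$ an edge. The second blocks give nothing new, because $R_GR_G^T=2rI-L_G$ is invertible on the $\theta_j$-eigenspace. So all you have is
\[
c^{\pm}_jE\mathbf{x}_1=\sigma_{\pm}\bigl(c^{\pm}_jE\mathbf{y}_1+ER_G\mathbf{y}_2\bigr),
\]
with two free unknowns $E\mathbf{y}_1$ and $ER_G\mathbf{y}_2$. If $\sigma_+=1$ and $\sigma_-=-1$, eliminating $ER_G\mathbf{y}_2$ gives only $(2-r)E\mathbf{x}_1=-\Delta_jE\mathbf{y}_1$, which is not a contradiction. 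Indeed, $E\mathbf{y}_1=\frac{r-2}{\Delta_j}E\mathbf{x}_1$ and $ER_G\mathbf{y}_2=c^+_j(E\mathbf{x}_1-E\mathbf{y}_1)$ satisfy both relations. So "subtracting suitable multiples" cannot force $\sigma_+=\sigma_-$. No argument confined to the eigenvalues in $\Phi_{ab}$ closes this case.

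\paragraph{The missing idea.} The paper uses strong cospectrality at an eigenvalue outside the support, namely $\theta^+_0=r+2$. Since $E_{\theta_0}(G)=\frac1nJ_{n\times n}$ and every column of $R_G$ contains two ones, $E_{r+2}(\mathcal{T}(G))$ in Equation~(\ref{x_3}) annihilates every pair state whose two vertices are of the same type. For the mixed $\mathbf{y}$ above, its first block is $\frac1n\mathbf 1_n\neq\mathbf 0$. In your notation, at $j=0$ you get $\mathbf{u}^+=\frac{c^+_0-2}{n}\mathbf 1_n=-\frac{r+2}{n}\mathbf 1_n$ for $\mathbf{y}$, but $\mathbf 0$ for $\mathbf{x}$. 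Hence $E_{r+2}(\mathcal{T}(G))\mathbf{x}=\pm E_{r+2}(\mathcal{T}(G))\mathbf{y}$ excludes the mixed case entirely, and your remaining arguments then complete the proof.

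\paragraph{Eigenvalue coincidences.} The paper does not address them. It works throughout with the labels $\theta^{\pm}_j$, $2r+2$ and the projector formulas of Theorem~\ref{Total graph spectra}, which is your fallback reading. The $r+2$ step is robust in any case. These formal projectors have mutually orthogonal ranges, since $c^+_jc^-_j=\theta_j-2r$. So left-multiplying a genuine strong-cospectrality relation at the eigenvalue $r+2$ by $E_{\theta^+_0}(\mathcal{T}(G))$ recovers $E_{\theta^+_0}(\mathcal{T}(G))\mathbf{x}=\pm E_{\theta^+_0}(\mathcal{T}(G))\mathbf{y}$.
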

\begin{proof}
$(i)$ If $a$ and $b$ are vertices of $G$, then Theorem~\ref{Total graph spectra} implies that 
\begin{equation}\label{x_1}
E_{\theta^{\pm }_j}(\mathcal{T}(G))(\textbf{e}^{n+m}_a - \textbf{e}^{n+m}_b) = \frac{1}{(2+\theta_j-\theta^{\pm }_j)^2+2r-\theta_j}\begin{pmatrix}
         (2+\theta_j-\theta^{\pm }_j)^2E_{\theta_j}(G)(\textbf{e}^n_a - \textbf{e}^n_b)  \\
         (2+\theta_j-\theta^{\pm }_j){R^T_G}E_{\theta_j}(G)(\textbf{e}^n_a - \textbf{e}^n_b)  
       \end{pmatrix}.
\end{equation}  
If possible, let $2+\theta_j-\theta^{\pm }_j = 0$ for some $j$ such that $0 \leq j \leq p$. Then it follows that $\theta_j = 2r$, which contradicts the fact that $G$ is non-bipartite. Thus $2+\theta_j-\theta^{\pm }_j \neq 0$ for $0 \leq j \leq p$. From Equation~(\ref{x_1}), we find that $E_{\theta^{+}_j}(\mathcal{T}(G))(\textbf{e}^{n+m}_a - \textbf{e}^{n+m}_b) \neq \textbf{0}_{n+m}$ if and only if $E_{\theta^{-}_j}(\mathcal{T}(G))(\textbf{e}^{n+m}_a - \textbf{e}^{n+m}_b) \neq \textbf{0}_{n+m}$. Therefore $\theta^{+}_j \in \Phi_{ab}$ if and only if $\theta^{-}_j \in \Phi_{ab}$ for $0 \leq j \leq p$.

Similarly, if $a$ and $b$ are edges of $G$, then 
\begin{equation}\label{y_1}
E_{\theta^{\pm }_j}(\mathcal{T}(G))(\textbf{e}^{n+m}_a - \textbf{e}^{n+m}_b) = \frac{1}{(2+\theta_j-\theta^{\pm }_j)^2+2r-\theta_j}\begin{pmatrix}
         (2+\theta_j-\theta^{\pm }_j)E_{\theta_j}(G)R_G(\textbf{e}^m_a - \textbf{e}^m_b)  \\
         {R^T_G}E_{\theta_j}(G)R_G (\textbf{e}^m_a - \textbf{e}^m_b)
       \end{pmatrix}.
\end{equation}
From Equation~(\ref{y_1}), we find that $\theta^{+}_j \in \Phi_{ab}$ if and only if $\theta^{-}_j \in \Phi_{ab}$ for $0 \leq j \leq p$.

$(ii)$ Suppose that $\textbf{e}^{n+m}_a - \textbf{e}^{n+m}_b$ and $\textbf{e}^{n+m}_c - \textbf{e}^{n+m}_d$ are Laplacian strongly cospectral. First consider the case that $a$ and $b$ are vertices of $G$. Then we have 
 \begin{equation}\label{y_2}
E_{\theta^{\pm }_j}(\mathcal{T}(G))(\textbf{e}^{n+m}_a - \textbf{e}^{n+m}_b) = \pm E_{\theta^{\pm }_j}(\mathcal{T}(G))(\textbf{e}^{n+m}_c - \textbf{e}^{n+m}_d)~~~\mathrm{for}~0 \leq j \leq p.
\end{equation}
Since $G$ is a connected graph on $n$ vertices, we have $E_{\theta_0}(G) = \frac{1}{n}J_{n \times n}$. This implies $E_{\theta_0}(G)(\textbf{e}^n_a - \textbf{e}^n_b)=\textbf{0}_n$. Also, we have $\theta^{+}_0=r+2$. Now Equation~(\ref{x_1}) and Equation~(\ref{y_2}) yield that 
\begin{equation}\label{x_2}
E_{r+2}(\mathcal{T}(G))(\textbf{e}^{n+m}_a - \textbf{e}^{n+m}_b) = \textbf{0}_{n+m} = E_{r+2}(\mathcal{T}(G))(\textbf{e}^{n+m}_c - \textbf{e}^{n+m}_d).
\end{equation}
Using Theorem~\ref{Total graph spectra}, we obtain 
\begin{equation}\label{x_3}
E_{r+2}(\mathcal{T}(G)) = \frac{1}{r^2+2r}\begin{pmatrix}
         \frac{r^2}{n} J_{n\times n} &  \frac{-2r}{n} J_{n \times m}\\
          \frac{-2r}{n}J_{m \times n} &  \frac{4}{n} J_{m \times m}
       \end{pmatrix}.
\end{equation}
Now Equation~(\ref{x_2}) and Equation~(\ref{x_3}) together imply that either $c,d$ are vertices of $G$; or $c,d$ are edges of $G$. If possible, let $c, d$ be edges of the graph 
$G$. Then we have
\begin{equation}\label{x_4}
E_{\theta^{\pm }_j}(\mathcal{T}(G))(\textbf{e}^{n+m}_c - \textbf{e}^{n+m}_d) = \frac{1}{(2+\theta_j-\theta^{\pm }_j)^2+2r-\theta_j}\begin{pmatrix}
         (2+\theta_j-\theta^{\pm }_j)E_{\theta_j}(G)R_G(\textbf{e}^m_c - \textbf{e}^m_d)  \\
         {R^T_G}E_{\theta_j}(G)R_G(\textbf{e}^m_c - \textbf{e}^m_d)  
       \end{pmatrix}.
\end{equation}      
Suppose that there exists $j$ with $0 \leq j \leq p$ and $\theta^{\pm}_j \in \Phi_{ab}$. Consider the following two cases. 

\noindent \textbf{Case 1.} Let $\theta^{+}_j \in \Phi^{+}_{ab,cd}$ and $\theta^{-}_j \in \Phi^{-}_{ab,cd}$. Then we have  
\begin{align}
&E_{\theta^{+ }_j}(\mathcal{T}(G))(\textbf{e}^{n+m}_a - \textbf{e}^{n+m}_b) = E_{\theta^{+}_j}(\mathcal{T}(G))(\textbf{e}^{n+m}_c - \textbf{e}^{n+m}_d)~\mathrm{and}~\label{T_1}\\
&E_{\theta^{-}_j}(\mathcal{T}(G))(\textbf{e}^{n+m}_a - \textbf{e}^{n+m}_b) = -E_{\theta^{-}_j}(\mathcal{T}(G))(\textbf{e}^{n+m}_c - \textbf{e}^{n+m}_d)\label{T_2}.
\end{align}
Using equations~(\ref{x_1}) and~(\ref{x_4}) in equations~(\ref{T_1}) and~(\ref{T_2}), we obtain  
\begin{align*}
&E_{\theta_j}(G)R_G(\textbf{e}^m_c - \textbf{e}^m_d) = (2+\theta_j-\theta^{+}_j)E_{\theta_j}(G)(\textbf{e}^n_a - \textbf{e}^n_b)~\mathrm{and}~\\
&E_{\theta_j}(G)R_G(\textbf{e}^m_c - \textbf{e}^m_d) = -(2+\theta_j-\theta^{-}_j)E_{\theta_j}(G)(\textbf{e}^n_a - \textbf{e}^n_b).
\end{align*}
As $E_{\theta_j}(G)(\textbf{e}^n_a - \textbf{e}^n_b) \neq \textbf{0}_n$, we have from the preceding two equations that $\theta^{+}_j + \theta^{-}_j = 2\theta_j + 4$. This implies $r=2$, which contradicts the assumption that $r \geq 3$. We get the same contradiction if  $\theta^{-}_j \in \Phi^{+}_{ab,cd}$ and $\theta^{+}_j \in \Phi^{-}_{ab,cd}$.

\noindent \textbf{Case 2.} Let $\theta^{+}_j,\theta^{-}_j \in \Phi^{+}_{ab,cd}$ or $\theta^{+}_j, \theta^{-}_j \in \Phi^{-}_{ab,cd}$. In this case also, proceeding as in Case 1 we find $\theta^{+}_j = \theta^{-}_j$. This implies $(r+2)^2-4\theta_j=0$. This is a contradiction as $0 < (r-2)^2 <(r+2)^2-4\theta_j$ for $0 \leq j \leq p$. 

Thus $\theta^{\pm}_j \notin \Phi_{ab}$ for $0 \leq j \leq p$. However, By Lemma~\ref{fixed state}, $|\Phi_{ab}| \geq 2$ and by Theorem~\ref{Total graph spectra}, the Laplacian eigenvalues of $\mathcal{T}(G)$ are $\theta^{\pm}_j$ and $2r+2$ for $0 \leq j \leq p$. So,  $\Phi_{ab}$ must contain at least one of $\theta^{+}_j$ or $\theta^{-}_j$ for some $j$, which is not possible. Hence, $c$ and $d$ must be vertices of the graph $G$. Therefore, 
\begin{equation}\label{x_5}
E_{\theta^{\pm }_j}(\mathcal{T}(G))(\textbf{e}^{n+m}_c - \textbf{e}^{n+m}_d) = \frac{1}{(2+\theta_j-\theta^{\pm }_j)^2+2r-\theta_j}\begin{pmatrix}
         (2+\theta_j-\theta^{\pm }_j)^2E_{\theta_j}(G)(\textbf{e}^n_c - \textbf{e}^n_d)  \\
         (2+\theta_j-\theta^{\pm }_j){R^T_G}E_{\theta_j}(G)(\textbf{e}^n_c - \textbf{e}^n_d)  
       \end{pmatrix}.
\end{equation}          
From equations (\ref{x_1}) and (\ref{x_5}), we find  $\theta^{+}_j \in \Phi^{+}_{ab,cd}$ if and only if $E_{\theta_j}(G)(\textbf{e}^n_a - \textbf{e}^n_b) = E_{\theta_j}(G)(\textbf{e}^n_c - \textbf{e}^n_d) \neq \textbf{0}_n$. Similarly, $\theta^{-}_j \in \Phi^{+}_{ab,cd}$ if and only if $E_{\theta_j}(G)(\textbf{e}^n_a - \textbf{e}^n_b) = E_{\theta_j}(G)(\textbf{e}^n_c - \textbf{e}^n_d) \neq \textbf{0}_n$. Hence, $\theta^{+}_j \in \Phi^{+}_{ab,cd}$ if and only if $\theta^{-}_j \in \Phi^{+}_{ab,cd}$ for $0 \leq j \leq p$. In a similar way, $\theta^{+}_j \in \Phi^{-}_{ab,cd}$ if and only if $\theta^{-}_j \in \Phi^{-}_{ab,cd}$ for $0 \leq j \leq p$.    

The proof for the case that $a$ and $b$ are edges of $G$ is similar to the preceding part, 
and hence the details are omitted. 
\end{proof}

The proof of the next lemma is similar to that of Lemma~\ref{perfect state transfer lemma non-bipartite}. 
\begin{lema}\label{perfect state transfer lemma bipartite}
Let $G$ be an $r$-regular connected bipartite graph such that $r \geq 3$. If $a$ and $b$ are vertices of $G$; or $a$ and $b$ are edges of $G$, then the following two conditions hold.
\begin{enumerate}
\item[(i)] $\theta^{+}_j \in \Phi_{ab}$ if and only if $\theta^{-}_j \in \Phi_{ab}$ for $0 \leq j \leq p-1$.
\item[(ii)] Let $\textbf{e}^{n+m}_a - \textbf{e}^{n+m}_b$ and $\textbf{e}^{n+m}_c - \textbf{e}^{n+m}_d$ be Laplacian strongly cospectral. Then 
$\theta^{+}_j \in \Phi^{+}_{ab,cd}$ if and only if $\theta^{-}_j \in \Phi^{+}_{ab,cd}$ for $0 \leq j \leq p-1$. Further, $\theta^{+}_j \in \Phi^{-}_{ab,cd}$ if and only if $\theta^{-}_j \in \Phi^{-}_{ab,cd}$ for $0 \leq j \leq p-1$.
\end{enumerate} 
\end{lema}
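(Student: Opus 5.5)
I will follow the proof of Lemma~\ref{perfect state transfer lemma non-bipartite} almost verbatim, using part (ii) of Theorem~\ref{Total graph spectra} in place of part (i). In the bipartite case the spectrum of $\mathcal{T}(G)$ consists of $\theta^{\pm}_j$ for $0 \leq j \leq p-1$, together with $3r$ and $2r+2$. The largest Laplacian eigenvalue of $G$ is $\theta_p = 2r$. For $j \leq p-1$ we have $\theta_j < 2r$, so $2+\theta_j-\theta^{\pm}_j \neq 0$. Indeed, $2+\theta_j-\theta^{\pm}_j = 0$ forces $\theta_j = 2r$, exactly as computed in the non-bipartite proof. This nonvanishing is the only place where non-bipartiteness was used for the $\theta^{\pm}_j$ eigenprojectors.

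\textbf{Part (i).} For $a,b$ vertices, Equation~(\ref{x_1}) holds for $0 \leq j \leq p-1$. Since the scalar $2+\theta_j-\theta^{\pm}_j$ is nonzero, $E_{\theta^{+}_j}(\mathcal{T}(G))(\textbf{e}^{n+m}_a-\textbf{e}^{n+m}_b) \neq \textbf{0}$ if and only if $E_{\theta_j}(G)(\textbf{e}^n_a-\textbf{e}^n_b)\neq \textbf{0}$, which holds if and only if the same is true for $\theta^{-}_j$. For $a,b$ edges, I use Equation~(\ref{y_1}). Here the lower block ${R^T_G}E_{\theta_j}(G)R_G(\textbf{e}^m_a-\textbf{e}^m_b)$ does not depend on the sign. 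The upper block vanishes whenever the lower one does, since $R_G R_G^T = L_G$'s signless counterpart $rI+A_G$ and $R_G^T E_{\theta_j}(G)R_G x = 0$ gives $\|E_{\theta_j}(G)R_Gx\|^2 = 0$. So both projections are nonzero or both are zero.

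\textbf{Part (ii).} First, $E_{\theta_0}(G)=\frac1nJ_{n \times n}$ kills $\textbf{e}^n_a-\textbf{e}^n_b$, and $\theta^+_0 = r+2$ with projector~(\ref{x_3}). Together these force $c,d$ to both be vertices or both be edges. Next I rule out the mixed case ($a,b$ vertices and $c,d$ edges) by the same two-case argument. In the opposite-sign case, comparing the upper blocks gives $\theta^+_j+\theta^-_j = 2\theta_j+4$, hence $r=2$, a contradiction. In the same-sign case we get $\theta^+_j=\theta^-_j$, contradicting $(r+2)^2-4\theta_j>(r-2)^2>0$. Thus no $\theta^{\pm}_j$ with $j\le p-1$ lies in $\Phi_{ab}$.

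The new point, and the main obstacle, is the bipartite eigenvalue $3r$. It could now lie in $\Phi_{ab}$, since its projector acts on vertex states through $Q$. The eigenvalue $2r+2$ is harmless: its projector annihilates vertex states, so it is not in the support of a vertex pair. Hence $\Phi_{ab}\subseteq\{3r\}$, and $\Phi_{ab}$ is a singleton because it is nonempty. This contradicts Lemma~\ref{fixed state}. Symmetrically, if $a,b$ are edges then $3r\notin\Phi_{ab}$ and $\Phi_{ab}\subseteq\{2r+2\}$, which again gives a contradiction. With the mixed case excluded, comparing Equations~(\ref{x_1}) and~(\ref{x_5}) (or their edge analogues) gives the sign equivalences for $\theta^{+}_j$ and $\theta^{-}_j$, $0\le j\le p-1$, exactly as before.
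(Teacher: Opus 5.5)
Your proposal is correct and follows the paper's approach. The paper only says the proof is similar to that of Lemma~\ref{perfect state transfer lemma non-bipartite}, and you carry that argument over: you get $2+\theta_j-\theta^{\pm}_j\neq 0$ from $\theta_j<2r=\theta_p$ for $j\le p-1$, and in the mixed case (vertex pair versus edge pair) you rule out the new eigenvalue $3r$ via Lemma~\ref{fixed state}. (Since $E_{3r}(\mathcal{T}(G))$ also annihilates edge states, $3r\notin\Phi_{cd}=\Phi_{ab}$ there, so that support is actually empty, but your singleton argument already suffices.)
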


Now we use Lemma~\ref{perfect state transfer lemma non-bipartite} and Lemma~\ref{perfect state transfer lemma bipartite} to prove the next four lemmas. 
\begin{lema}\label{non-existence of PST 1st Lemma}
Let $a$ and $b$ be two vertices of an $r$-regular connected non-bipartite graph $G$ such that $r \geq 3$. Also, let $r+1$ not be a Laplacian eigenvalue of $G$. Then the pair state $\textbf{e}^{n+m}_a - \textbf{e}^{n+m}_b$ of $\mathcal{T}(G)$ cannot be involved in Pair-LPST.  
\end{lema}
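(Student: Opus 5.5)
Suppose, for contradiction, that $\mathcal{T}(G)$ exhibits Pair-LPST between $\textbf{e}^{n+m}_a - \textbf{e}^{n+m}_b$ and some pair state $\textbf{e}^{n+m}_c - \textbf{e}^{n+m}_d$ at time $t_1$. By Theorem~\ref{More conditions for the existence of perfect pair state transfer}, the two states are Laplacian strongly cospectral. The proof of Lemma~\ref{perfect state transfer lemma non-bipartite}(ii) then forces $c,d$ to be vertices of $G$.

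The support $\Phi_{ab}$ is nonempty and, by Lemma~\ref{fixed state}, contains elements of both $\Phi^{+}_{ab,cd}$ and $\Phi^{-}_{ab,cd}$. Since $E_{2r+2}(\mathcal{T}(G))$ vanishes on the vertex block and $E_{\theta_0}(G)(\textbf{e}^n_a-\textbf{e}^n_b)=\textbf{0}_n$, we have
$$\Phi_{ab}\subseteq\{\theta^{\pm}_j : 1\le j\le p\}.$$
By Lemma~\ref{perfect state transfer lemma non-bipartite}, $\theta^{+}_j$ and $\theta^{-}_j$ always enter $\Phi_{ab}$ together, and they lie in the same sign class. Hence there is some $j\ge1$ with $\theta^{\pm}_j$ both in $\Phi^{+}_{ab,cd}$ or both in $\Phi^{-}_{ab,cd}$.

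By Theorem~\ref{More conditions for the existence of perfect pair state transfer}, $t_1(\theta^{+}_j-\theta^{-}_j)$ is then an even multiple of $\pi$. Indeed, take any reference eigenvalue in $\Phi^{+}_{ab,cd}$ and subtract the two conditions: both $t_1(\theta_0-\theta^{\pm}_j)$ lie in $2\pi\mathbb{Z}$, or both lie in $(2\mathbb{Z}+1)\pi$. So
$$t_1\sqrt{(r+2)^2-4\theta_j}\in 2\pi\mathbb{Z}\setminus\{0\}.$$
The value is nonzero because $(r+2)^2-4\theta_j>(r-2)^2>0$, as noted earlier.

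Next, pick $k\ne j$ with $\theta^{\pm}_k\in\Phi_{ab}$ lying in the opposite sign class from $\theta^{\pm}_j$; such a $k$ exists since both classes are nonempty. The condition between the classes, applied to $\theta^{+}_j$ versus $\theta^{+}_k$ and to $\theta^{-}_j$ versus $\theta^{-}_k$, gives that $t_1(\theta^{+}_j-\theta^{+}_k)$ and $t_1(\theta^{-}_j-\theta^{-}_k)$ are odd multiples of $\pi$. Adding these yields
$$t_1\bigl(2(\theta_j-\theta_k)\bigr)\in 2\pi\mathbb{Z}.$$
Subtracting them yields
$$t_1\bigl(\sqrt{D_j}-\sqrt{D_k}\bigr)\in 2\pi\mathbb{Z},\qquad D_i=(r+2)^2-4\theta_i .$$
It follows that the quantities $\sqrt{D_i}$ over the support, and $\theta_j-\theta_k$, are all rational multiples of one another, since each is a rational multiple of $\pi/t_1$. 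In particular $\sqrt{D_j}/(\theta_j-\theta_k)\in\mathbb{Q}$. For Laplacian eigenvalues, which are algebraic integers, this needs care.

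The main obstacle is to get a parity contradiction from the arithmetic of $D_i$, using that $r+1$ is not a Laplacian eigenvalue of $G$. I expect to use the hypothesis as follows: $\theta_i\neq r+1$ means $D_i\neq (r+2)^2-4(r+1)=r^2$, so $\sqrt{D_i}$ avoids the specific value $r$. I will first reduce to $\theta_j$ rational, hence an integer. Either the ratio condition above forces this through Lemma~\ref{Linearly independent corollary}, or, if $\theta_j$ is irrational, its algebraic conjugates lie in the support with the same behaviour, and I compare with the condition in Theorem~\ref{Conditions for the existence of perfect pair state transfer}(ii), which requires all of $\Phi_{ab}$ to lie in one $\mathbb{Q}(\sqrt{\Delta})$ with a fixed rational part $x/2$.

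Rational part of $\theta^{\pm}_i$: since the $\theta^{\pm}_i$ have rational part $(r+2+2\theta_i)/2$, a common $x$ forces all $\theta_i$ in the support to be equal when they are integral. Then $\theta_j=\theta_k$, a contradiction. The only escape is that $D_i$ is a perfect square, where the $\sqrt{D_i}$ contribute to the rational part.

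Perfect-square case: here $\Phi_{ab}\subseteq\mathbb{Z}$. Since $D_i\equiv r^2\pmod 4$, we get $\sqrt{D_i}\equiv r\pmod 2$. I will compute $g$ and compare parities of $(\theta^{+}_j-\theta^{-}_j)/g=\sqrt{D_j}/g$ with $(\theta^{+}_j-\theta^{+}_k)/g$, using Theorem~\ref{Conditions for the existence of perfect pair state transfer}(iii). The exclusion $\sqrt{D_i}\neq r$ is what should prevent the needed $2$-adic valuations from lining up. This last parity bookkeeping is the step I expect to be hardest.
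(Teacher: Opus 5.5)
Your treatment of the non-square case is essentially the paper's quadratic case. Once each $\theta_i$ with $\theta^{\pm}_i\in\Phi_{ab}$ is an integer, the common rational part $x/2$ in Theorem~\ref{Conditions for the existence of perfect pair state transfer}(ii) forces all such $\theta_i$ to coincide. That contradicts the $k\neq j$ you obtain from Lemma~\ref{fixed state} and Lemma~\ref{perfect state transfer lemma non-bipartite}(ii). Your reduction to integrality is left vague, but it takes one line and needs no Galois conjugates. In the quadratic case, $\sqrt{D_j}=\theta^{+}_j-\theta^{-}_j=\frac{1}{2}(y_+-y_-)\sqrt{\Delta}$, so $D_j$, and hence $\theta_j=((r+2)^2-D_j)/4$, is rational. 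In the integral case, $\theta^{+}_j+\theta^{-}_j=r+2+2\theta_j\in\mathbb{Z}$. Either way $\theta_j$ is a rational algebraic integer, hence an integer.

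The genuine gap is the perfect-square case, and the mechanism you propose there does not work for even $r$. Comparing the parities of $\sqrt{D_j}/g$ and $(\theta^{+}_j-\theta^{+}_k)/g$, together with $\sqrt{D_i}\neq r$, yields no contradiction. Take $r=10$, $\theta_j=32$, $\theta_k=20$:
\begin{itemize}
\item $\sqrt{D_j}=4$ and $\sqrt{D_k}=8$, both $\equiv r \pmod 2$ and both $\neq r$;
\item $\theta^{\pm}_j=40,36$ and $\theta^{\pm}_k=30,22$, so the differences from $40$ are $4,10,18$ and $g=2$;
\item the quotients $2,5,9$ give exactly the parity pattern of Theorem~\ref{Conditions for the existence of perfect pair state transfer}(iii), with $\theta^{\pm}_j$ in one class and $\theta^{\pm}_k$ in the other.
\end{itemize}
With $t_1=\pi/2$, every relation you derive ($t_1\sqrt{D_j}\in 2\pi\mathbb{Z}\setminus\{0\}$, etc.) also holds. (For odd $r$ your parity idea does succeed, since an odd $\sqrt{D_j}$ makes $\sqrt{D_j}/g$ odd.)

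What excludes such configurations is the spectral bound, which you never invoke. For a connected non-bipartite $r$-regular graph, $0<\theta_i<2r$, so $(r-2)^2<D_i<(r+2)^2$ and $\sqrt{D_i}\in\{r-1,r,r+1\}$. Your congruence $\sqrt{D_i}\equiv r\pmod 2$ then forces $\sqrt{D_i}=r$, i.e.\ $\theta_i=r+1$, contradicting the hypothesis. This is exactly the paper's integer case, and it is where non-bipartiteness enters. It excludes every integral support element outright, so no $t_1$ or $g$ bookkeeping is needed.
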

\begin{proof}
Let $\mathcal{T}(G)$ exhibit Pair-LPST between the pair states $\textbf{e}^{n+m}_a - \textbf{e}^{n+m}_b$ and $\textbf{e}^{n+m}_c - \textbf{e}^{n+m}_d$. Since $a$ and $b$ are vertices of $G$, from Theorem~\ref{Total graph spectra} and Equation~(\ref{x_3}) we have 
$$E_{\theta^{\pm }_0}{(\mathcal{T}(G))}(\textbf{e}^{n+m}_a - \textbf{e}^{n+m}_b) = \textbf{0}_{n+m}= E_{2r+2}{(\mathcal{T}(G))}(\textbf{e}^{n+m}_a - \textbf{e}^{n+m}_b).$$ 
This implies that $\Phi_{ab} \subseteq \{\theta^{\pm }_j \colon 1 \leq j \leq p\}$. By Theorem~\ref{Conditions for the existence of perfect pair state transfer}, either all the elements in $\Phi_{ab}$ are integers or all the elements in $\Phi_{ab}$ are quadratic integers. From Lemma~\ref{perfect state transfer lemma non-bipartite}, for each $j$ with $1 \leq j \leq p$ we have either $\theta^{\pm }_j \in \Phi_{ab}$ or $\theta^{\pm }_j \notin \Phi_{ab}$.

Suppose that $\theta^{\pm }_j$ is an integer and $\theta^{\pm }_j \in \Phi_{ab}$ for some $j$ with $1 \leq j \leq p$. Then $\theta_j$ is an integer and $(r+2)^2-4\theta_j$ is a perfect square. Since $0 < \theta_j < 2r$, we have $\sqrt{(r+2)^2-4\theta_j} \in \{r-1,r,r+1\}$. If $\sqrt{(r+2)^2-4\theta_j} = r-1$ or $\sqrt{(r+2)^2-4\theta_j} = r+1$, then $\theta^{+}_j \notin \mathbb{Z}$, which is a contradiction. If $\sqrt{(r+2)^2-4\theta_j} = r$, then $\theta_j = r+1$, which is again a contradiction. Thus we find that $\theta^{\pm }_j \notin \Phi_{ab}$ for $1 \leq j \leq p$. This implies that $\Phi_{ab}$ is an empty set, which is a contradiction.

Suppose that $\theta^{\pm }_j$ is a quadratic integer and $\theta^{\pm }_j \in \Phi_{ab}$ for some $j$ with $1 \leq j \leq p$. Then Theorem~\ref{Conditions for the existence of perfect pair state transfer} yields that $\theta^{\pm }_j=\frac{1}{2}(x+y_{\pm}\sqrt{\Delta})$, where $\Delta$ is a square-free integer such that $\Delta>1$, $x,y_{\pm} \in \mathbb{Z}$ and, $y_{+}$ and $y_{-}$ have the same parity. This, along with Lemma~\ref{Total graph spectra}, give 
\begin{equation}\label{b_1}
x + \frac{1}{2}(y_{+} + y_{-})\sqrt{\Delta} = \theta^{+}_j + \theta^{-}_j = r+2+2\theta_j.
\end{equation}
This implies that
\begin{equation}\label{x_7}
4\theta_j = 2(x - r - 2) + (y_{+} + y_{-})\sqrt{\Delta}.
\end{equation}
From Theorem~\ref{Total graph spectra} and Equation~(\ref{x_7}), we obtain that 
\begin{equation}\label{x_8}
4\theta^{+}_j\theta^{-}_j = (x-r-2)(x+r+4)+\frac{(y_{+}+y_{-})^2}{4}\Delta + (x+1)(y_{+}+y_{-})\sqrt{\Delta}.
\end{equation}
Also, we have
\begin{equation}\label{x_9}
4\theta^{+}_j\theta^{-}_j = x^2 + y_{+}y_{-}\Delta + x(y_{+}+y_{-})\sqrt{\Delta}.
\end{equation}
As $\sqrt{\Delta}$ is an irrational number, Equation~(\ref{x_8}) and Equation~(\ref{x_9}) yield that 
\begin{equation}\label{b_2}
(x+1)(y_{+}+y_{-}) = x(y_{+}+y_{-}).
\end{equation}
From Equation~(\ref{b_2}) we find that $y_{+}+y_{-} = 0$. Then Equation~(\ref{x_7}) gives $\theta_j=\frac{x - r - 2}{2}$, from which we find that $\Phi_{ab} = \left\{{\theta_j}^{+},{\theta_j}^{-}\right\}$. Therefore by Lemma~\ref{perfect state transfer lemma non-bipartite}, we have 
$\Phi^{+}_{ab,cd} = \{{\theta_j}^{+},{\theta_j}^{-}\}$ or $\Phi^{-}_{ab,cd} = \{{\theta_j}^{+},{\theta_j}^{-}\}$.
This implies that either $\Phi^{-}_{ab,cd}$ is empty or $\Phi^{+}_{ab,cd}$ is empty. This contradicts Theorem~\ref{fixed state}. Hence, the pair state $\textbf{e}^{n+m}_a - \textbf{e}^{n+m}_b$ cannot be involved in Pair-LPST.
\end{proof}

\begin{lema}\label{non-existence of PST 2nd Lemma}
Let $a$ and $b$ be two vertices of an $r$-regular connected bipartite graph $G$ such that $r \geq 3$. Also, let $r+1$ not be a Laplacian eigenvalue of $G$. Then the pair state $\textbf{e}^{n+m}_a - \textbf{e}^{n+m}_b$ of $\mathcal{T}(G)$ cannot be involved in Pair-LPST.   
\end{lema}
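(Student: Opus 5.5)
The plan is to mirror the proof of Lemma~\ref{non-existence of PST 1st Lemma}, replacing Lemma~\ref{perfect state transfer lemma non-bipartite} by Lemma~\ref{perfect state transfer lemma bipartite}. The new ingredient is the extra eigenvalue $3r$ with eigenprojector $E_{3r}(\mathcal{T}(G))$ from Theorem~\ref{Total graph spectra}(ii).

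Suppose $\mathcal{T}(G)$ exhibits Pair-LPST between $\textbf{e}^{n+m}_a-\textbf{e}^{n+m}_b$ and $\textbf{e}^{n+m}_c-\textbf{e}^{n+m}_d$. First I locate $\Phi_{ab}$. As before, $E_{\theta_0}(G)(\textbf{e}^n_a-\textbf{e}^n_b)=\textbf{0}_n$, so $\theta_0^{\pm}\notin\Phi_{ab}$. Also $E_{2r+2}(\mathcal{T}(G))$ vanishes on the vertex block, so $2r+2\notin\Phi_{ab}$. The eigenvalue $3r$ lies in $\Phi_{ab}$ exactly when $Q(\textbf{e}^n_a-\textbf{e}^n_b)\neq\textbf{0}_n$, that is, exactly when $a$ and $b$ lie in different parts $X_1$, $X_2$. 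Hence
$$\Phi_{ab}\subseteq\{\theta_j^{\pm}\colon 1\le j\le p-1\}\cup\{3r\}.$$
For $1\le j\le p-1$ we have $0<\theta_j<2r$. Lemma~\ref{perfect state transfer lemma bipartite} shows that $\theta_j^{+}$ and $\theta_j^{-}$ are either both in $\Phi_{ab}$ or both out, and that, when they are in, they lie in the same one of $\Phi^{\pm}_{ab,cd}$.

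Next I rule out the pairs $\theta_j^{\pm}$ using Theorem~\ref{Conditions for the existence of perfect pair state transfer}(ii).

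\textbf{Integral case.} If $\Phi_{ab}\subseteq\mathbb{Z}$ and some $\theta_j^{\pm}\in\Phi_{ab}$, then $(r+2)^2-4\theta_j$ is a perfect square. With $0<\theta_j<2r$ its root lies in $\{r-1,r,r+1\}$. The values $r\pm1$ make $\theta_j^{+}$ non-integral, since they have the parity opposite to $r+2$. The value $r$ gives $\theta_j=r+1$, which is excluded by hypothesis. So $\Phi_{ab}\subseteq\{3r\}$, a singleton, which contradicts Lemma~\ref{fixed state}.

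\textbf{Quadratic case.} Here $3r$ is an integer, so it can belong to $\Phi_{ab}$ only if it has the form $\frac12(x+y\sqrt\Delta)$ with $y=0$. This forces $y$ even for every element, and in particular $y_{\pm}$ even. The computation of Equations~(\ref{b_1})--(\ref{b_2}) goes through verbatim and gives $y_++y_-=0$. Then $\theta_j=\frac{x-r-2}{2}$, so $j$ is determined by $x$ and at most one pair $\theta_j^{\pm}$ is in $\Phi_{ab}$. The possibilities are therefore $\Phi_{ab}=\{\theta_j^{+},\theta_j^{-}\}$, $\{\theta_j^{+},\theta_j^{-},3r\}$ or $\{3r\}$.

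The first and last contradict Lemma~\ref{fixed state}, the first via Lemma~\ref{perfect state transfer lemma bipartite}(ii). The middle case is the main obstacle. If $3r\in\Phi_{ab}$, then $3r=\frac x2$ (the element of $\Phi_{ab}$ with $y=0$), so $x=6r$ and $\theta_j=\frac{5r-2}{2}>2r$, contradicting $\theta_j<2r$ (which holds since $j\le p-1$ and $\theta_p=2r$). So in the quadratic case $3r\notin\Phi_{ab}$ after all, and we are back in the already-excluded case $\Phi_{ab}=\{\theta_j^{+},\theta_j^{-}\}$. This contradiction completes the plan.
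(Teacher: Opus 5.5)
Your proof is correct and follows essentially the same route as the paper. You confine $\Phi_{ab}$ to $\{\theta_j^{\pm}\colon 1\le j\le p-1\}\cup\{3r\}$, use the pairing from Lemma~\ref{perfect state transfer lemma bipartite}, rule out integral pairs by the perfect-square argument and quadratic pairs by $y_{+}+y_{-}=0$, and finish with the fixed-state contradiction. The only difference is organizational: you split on the integral/quadratic alternative of Theorem~\ref{Conditions for the existence of perfect pair state transfer}(ii) rather than on whether $a,b$ lie in the same part. Your explicit check that $3r\in\Phi_{ab}$ forces $x=6r$, and hence $\theta_j=\frac{5r-2}{2}>2r$, fills a step that the paper's Case~2 skips when it simply asserts that every element of $\Phi_{ab}$ is an integer.
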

\begin{proof}
Let $\mathcal{T}(G)$ exhibit Pair-LPST between the pair states $\textbf{e}^{n+m}_a - \textbf{e}^{n+m}_b$ and $\textbf{e}^{n+m}_c - \textbf{e}^{n+m}_d$. We have $\Phi_{ab} \subseteq \{\theta^{\pm }_j \colon 1 \leq j \leq p-1\} \cup \{3r\}$. Let $X_1 \cup X_2$ be a bipartition of the vertex set of $G$. Then the following two cases arise.

\noindent \textbf{Case 1.} Let $a, b \in X_1$ or $a, b \in X_2$. Then by Theorem~\ref{Total graph spectra}, we find that $3r \notin \Phi_{ab}$. This implies that $\Phi_{ab} \subseteq \{\theta^{\pm }_j \colon 1 \leq j \leq p-1\}$. Then following the same procedure as in Lemma~\ref{non-existence of PST 1st Lemma}, we arrive at a contradiction. 

\noindent \textbf{Case 2.} Let $a \in X_1,~b \in X_2$ or $b \in X_1,~a \in X_2$. Then by Theorem~\ref{Total graph spectra}, $3r \in \Phi_{ab}$. Since the pair state $\textbf{e}^{n+m}_a - \textbf{e}^{n+m}_b$ involves in Laplacian perfect pair state transfer, Theorem~\ref{Conditions for the existence of perfect pair state transfer} implies that all the eigenvalues of $\Phi_{ab}$ are integers. Now one can prove that $\theta^{\pm }_j \notin \Phi_{ab}$ for $1 \leq j \leq p-1$. Thus $\Phi_{ab} = \{3r\}$, contradicting Theorem~\ref{fixed state}. Hence the pair state $\textbf{e}^{n+m}_a - \textbf{e}^{n+m}_b$ of $\mathcal{T}(G)$ cannot be involved in Pair-LPST. 
\end{proof}

\begin{lema}\label{non-existence of PST 3rd Lemma}
Let $a$ and $b$ be two edges of an $r$-regular connected non-bipartite graph $G$ such that $r \geq 3$. Also, let $r+1$ not be a Laplacian eigenvalue of $G$. Then the pair state $\textbf{e}^{n+m}_a - \textbf{e}^{n+m}_b$ of $\mathcal{T}(G)$ cannot be involved in Pair-LPST.  
\end{lema}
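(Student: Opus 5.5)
We argue by contradiction, following the proof of Lemma~\ref{non-existence of PST 1st Lemma}. Suppose $\mathcal{T}(G)$ exhibits Pair-LPST between $\textbf{e}^{n+m}_a - \textbf{e}^{n+m}_b$ and $\textbf{e}^{n+m}_c - \textbf{e}^{n+m}_d$, where $a,b$ are edges of $G$.

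The first step is to locate the eigenvalue support $\Phi_{ab}$. Since $R_G^T E_{\theta_0}(G) R_G = \frac{1}{n}R_G^TJ_{n\times n}R_G = \frac{4}{n}J_{m\times m}$, Equation~(\ref{y_1}) gives $E_{\theta^{\pm}_0}(\mathcal{T}(G))(\textbf{e}^{n+m}_a - \textbf{e}^{n+m}_b)=\textbf{0}_{n+m}$. We therefore have
$$\Phi_{ab} \subseteq \{\theta^{\pm}_j \colon 1 \leq j \leq p\} \cup \{2r+2\}.$$
The new feature, compared with the vertex case, is that $2r+2$ may lie in $\Phi_{ab}$. Indeed, $E_{2r+2}(\mathcal{T}(G))$ is the projection onto the null space of $R_G$, and $\textbf{e}^m_a-\textbf{e}^m_b$ need not be orthogonal to that null space. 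By Lemma~\ref{perfect state transfer lemma non-bipartite}(i), each pair $\theta^{+}_j,\theta^{-}_j$ is either entirely in $\Phi_{ab}$ or entirely outside it.

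Next we invoke Theorem~\ref{Conditions for the existence of perfect pair state transfer}(ii): either every element of $\Phi_{ab}$ is an integer, or every element is a quadratic integer of the form $\frac{1}{2}(x+y\sqrt{\Delta})$.

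\emph{Integer case.} Suppose $\theta^{\pm}_j\in\Phi_{ab}$ for some $j\ge 1$. The argument of Lemma~\ref{non-existence of PST 1st Lemma} applies verbatim. Since $0<\theta_j<2r$, we get $\sqrt{(r+2)^2-4\theta_j}\in\{r-1,r,r+1\}$. The values $r\pm1$ make $\theta^{\pm}_j$ non-integral, because $r+2\pm(r\pm1)$ is odd. The value $r$ forces $\theta_j=r+1$, which is excluded by hypothesis. So no $\theta^{\pm}_j$ lies in $\Phi_{ab}$, and $\Phi_{ab}\subseteq\{2r+2\}$. This makes the state fixed, contradicting Lemma~\ref{fixed state}.

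\emph{Quadratic case.} Here $2r+2$ is an integer, i.e.\ $y=0$, while the $\theta^{\pm}_j$ in the support are irrational and so have $y\neq 0$. Every $y$ must have the same parity, and $0$ is even, so all $y_\pm$ in the support are even. This parity restriction is compatible with the argument below. Suppose $\theta^{\pm}_j\in\Phi_{ab}$. The computation in Equations~(\ref{b_1})--(\ref{b_2}) gives $y_++y_-=0$ and $\theta_j=\frac{x-r-2}{2}$, where $x$ is common to the whole support.

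The main obstacle is that we can no longer immediately conclude $\Phi_{ab}=\{\theta^+_j,\theta^-_j\}$, since $2r+2$ may also belong to the support. To handle this, we first note that $\theta_j$ is determined by $x$, so at most one index $j$ contributes. Hence
$$\Phi_{ab}\subseteq\{\theta^+_j,\theta^-_j,2r+2\}.$$
If $2r+2\notin\Phi_{ab}$, then Lemma~\ref{perfect state transfer lemma non-bipartite}(ii) puts both $\theta^{\pm}_j$ into the same class $\Phi^{+}_{ab,cd}$ or $\Phi^{-}_{ab,cd}$. The other class is then empty, contradicting Lemma~\ref{fixed state}.

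If $2r+2\in\Phi_{ab}$, we use the common-$x$ constraint. The element $2r+2=\frac{1}{2}(x+0\cdot\sqrt{\Delta})$ forces $x=4r+4$. Then $\theta_j=\frac{x-r-2}{2}=\frac{3r+2}{2}$. Substituting, $(r+2)^2-4\theta_j=r^2-2r$ must be $\Delta y_+^2/4$ with $y_+$ even. At the same time $\theta^\pm_j=\frac{r+2+2\theta_j\pm\sqrt{r^2-2r}}{2}=\frac{4r+4\pm\sqrt{r^2-2r}}{2}$, consistent with $x=4r+4$.

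So no contradiction arises yet, and one must use the parity condition (iii)(a)/(b) of Theorem~\ref{Conditions for the existence of perfect pair state transfer} with $g=\gcd\{(\theta_0-\theta_k)/\sqrt{\Delta}\}$. I would take the reference eigenvalue to be $\theta^+_j\in\Phi^{+}_{ab,cd}$. The differences are then $\theta^+_j-\theta^-_j=y_+\sqrt{\Delta}$ and $\theta^+_j-(2r+2)=\frac{y_+}{2}\sqrt{\Delta}$. This gives $g=|y_+|/2$, so the ratios are $2$ and $1$. Hence $\theta^-_j\in\Phi^{+}_{ab,cd}$ and $2r+2\in\Phi^{-}_{ab,cd}$, which is consistent with Lemma~\ref{perfect state transfer lemma non-bipartite}(ii). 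This subcase therefore survives the spectral tests.

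To finish, I expect to need a direct argument on the vectors. Strong cospectrality at $\theta^\pm_j$, together with the $\pm$ classes computed above, should force $c,d$ to be edges with $E_{\theta_j}(G)R_G(\textbf{e}^m_a-\textbf{e}^m_b)=E_{\theta_j}(G)R_G(\textbf{e}^m_c-\textbf{e}^m_d)$. The condition at $2r+2$ then gives opposite null-space components. Combining these with $\textbf{e}^m_a-\textbf{e}^m_b$ being the sum of its two projections should yield $\textbf{e}^m_a-\textbf{e}^m_b+\textbf{e}^m_c-\textbf{e}^m_d\in$ the image of $R_G^TE_{\theta_j}(G)$, and possibly a contradiction from integrality of the entries of such a vector. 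Alternatively, one can check whether $\theta_j=\frac{3r+2}{2}$ with $r^2-2r$ non-square is really attainable in general. This last subcase is the step I expect to be hardest and least certain.
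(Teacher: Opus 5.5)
Your reduction to $\Phi_{ab}\subseteq\{\theta^{\pm}_j\colon 1\le j\le p\}\cup\{2r+2\}$ is correct. So are your integer case and your quadratic case with $2r+2\notin\Phi_{ab}$. This is exactly what the paper means by ``the rest of the proof is similar to that of Lemma~\ref{non-existence of PST 1st Lemma}''.

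You are also right that the subcase $\Phi_{ab}=\{\theta^+_j,\theta^-_j,2r+2\}$, with $x=4r+4$ and $\theta_j=\frac{3r+2}{2}$, is not removed by that argument. Here $2r+2$ is the midpoint of the conjugates $\theta^{\pm}_j$, the same shape as the spectrum of a path on three vertices. Conditions (ii) and (iii) of Theorem~\ref{Conditions for the existence of perfect pair state transfer} can then be met, with $\theta^{\pm}_j$ in one class and $2r+2$ in the other. The paper's one-line proof passes over this subcase as well.

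Your proposal, however, leaves this subcase open, so as it stands it is not a proof. Your second suggested exit cannot work on its own, because the eigenvalue really occurs. For example, $\mathrm{Cay}(\mathbb{Z}_{12},\{\pm 2,\pm 3\})$ is $4$-regular, connected and non-bipartite with Laplacian eigenvalues $0,3,4,6,7$. So $r+1=5$ is absent while $\frac{3r+2}{2}=7$ is present. Your first suggested exit is not carried out.

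The missing idea is to use the eigenvector rather than just the eigenvalue. Put $\textbf{w}=R_G(\textbf{e}^m_a-\textbf{e}^m_b)\in\mathbb{R}^n$.

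\begin{itemize}
\item Since $2+\theta_k-\theta^{\pm}_k\neq 0$ (proof of Lemma~\ref{perfect state transfer lemma non-bipartite}), Equation~(\ref{y_1}) shows that $\theta^{\pm}_k\notin\Phi_{ab}$ forces $E_{\theta_k}(G)\textbf{w}=\textbf{0}_n$.
\item In your subcase this holds for every $k\neq j$, so $\textbf{w}=\sum_{k=0}^{p}E_{\theta_k}(G)\textbf{w}=E_{\theta_j}(G)\textbf{w}$.
\item Hence $\textbf{w}$ is a Laplacian eigenvector of $G$ for $\theta_j$, that is, $A_G\textbf{w}=(r-\theta_j)\textbf{w}=-\frac{r+2}{2}\textbf{w}$.
\item If the edges $a=v_1v_2$ and $b=v_3v_4$ are disjoint, then $\textbf{w}=\textbf{e}^n_{v_1}+\textbf{e}^n_{v_2}-\textbf{e}^n_{v_3}-\textbf{e}^n_{v_4}$. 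If they share the vertex $v_2=v_3$, then $\textbf{w}=\textbf{e}^n_{v_1}-\textbf{e}^n_{v_4}$.
\item In both cases the $v_1$-entry of $\textbf{w}$ is $1$, while $(A_G\textbf{w})_{v_1}\ge -1>-\frac{r+2}{2}$. This contradicts the eigenvalue equation at $v_1$.
\end{itemize}

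This disposes of the remaining subcase for every $r\ge 3$ and completes your argument.
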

\begin{proof}
Let $\mathcal{T}(G)$ exhibit Pair-LPST between $\textbf{e}^{n+m}_a - \textbf{e}^{n+m}_b$ and $\textbf{e}^{n+m}_c - \textbf{e}^{n+m}_d$. Since $a$ and $b$ are two edges of $G$, we have 
$$E_{\theta^{\pm }_0}{(\mathcal{T}(G))}(\textbf{e}^{n+m}_a - \textbf{e}^{n+m}_b) = \textbf{0}_{n+m}.$$
This implies that $\Phi_{ab} \subseteq \{\theta^{\pm }_j \colon 1 \leq j \leq p\} \cup \{2r+2\}$. Now the rest of the proof is similar to that of Lemma~\ref{non-existence of PST 1st Lemma}.
\end{proof}

\begin{lema}\label{non-existence of PST 4th Lemma}
Let $a$ and $b$ be two edges of an $r$-regular connected bipartite graph $G$ such that $r \geq 3$. Also, let $r+1$ not be a Laplacian eigenvalue of $G$. Then the pair state $\textbf{e}^{n+m}_a - \textbf{e}^{n+m}_b$ of $\mathcal{T}(G)$ cannot be involved in Pair-LPST.  
\end{lema}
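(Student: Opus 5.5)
We follow the pattern of Lemma~\ref{non-existence of PST 3rd Lemma}, now in the bipartite setting. Suppose $\mathcal{T}(G)$ exhibits Pair-LPST between $\textbf{e}^{n+m}_a - \textbf{e}^{n+m}_b$ and $\textbf{e}^{n+m}_c - \textbf{e}^{n+m}_d$, where $a,b$ are edges of $G$.

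\textbf{Step 1: locate the support.} By Theorem~\ref{Total graph spectra}(ii), the eigenprojector $E_{3r}(\mathcal{T}(G))$ vanishes on the edge block. Hence $3r \notin \Phi_{ab}$. Since $E_{\theta_0}(G)=\frac1n J_{n\times n}$ and $R_G(\textbf{e}^m_a-\textbf{e}^m_b)$ has coordinate sum $2-2=0$, we get $E_{\theta_0}(G)R_G(\textbf{e}^m_a-\textbf{e}^m_b)=\textbf{0}_n$. So $\theta^{\pm}_0\notin\Phi_{ab}$, and therefore
\[\Phi_{ab}\subseteq\{\theta^{\pm}_j\colon 1\le j\le p-1\}\cup\{2r+2\}.\]
Lemma~\ref{perfect state transfer lemma bipartite} gives that $\theta^{+}_j\in\Phi_{ab}$ if and only if $\theta^{-}_j\in\Phi_{ab}$. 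In the strongly cospectral case it also gives that $\theta^{+}_j$ and $\theta^{-}_j$ lie in the same set $\Phi^{\pm}_{ab,cd}$. This uses the version of part (ii) for edges $a,b$; as in the proof of Lemma~\ref{perfect state transfer lemma non-bipartite}, one first rules out mixed types for $c,d$ via $E_{r+2}$. Here $0<\theta_j<2r$ for $1\le j\le p-1$, because $\theta_p=2r$ in the bipartite case.

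\textbf{Step 2: apply Theorem~\ref{Conditions for the existence of perfect pair state transfer}(ii).} Either all of $\Phi_{ab}$ are integers, or all are of the form $\frac12(x+y\sqrt\Delta)$ with a common $x$ and $y$ of fixed parity.

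In the integer case, suppose $\theta^{\pm}_j\in\Phi_{ab}$. Then $(r+2)^2-4\theta_j$ is a perfect square lying strictly between $(r-2)^2$ and $(r+2)^2$. It must also have the parity of $r$ for $\theta^{\pm}_j$ to be an integer, which forces $\sqrt{(r+2)^2-4\theta_j}=r$, i.e.\ $\theta_j=r+1$. This is excluded by hypothesis. Hence $\Phi_{ab}\subseteq\{2r+2\}$, which contradicts Lemma~\ref{fixed state}.

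In the quadratic case, $2r+2\in\mathbb{Z}$ would need $y=0$, which is possible only in the even-parity family. The computation of Lemma~\ref{non-existence of PST 1st Lemma} (Equations~(\ref{b_1})--(\ref{b_2})) applies verbatim to any pair $\theta^{\pm}_j\in\Phi_{ab}$ and yields $y_++y_-=0$. Then $\theta_j=\frac{x-r-2}{2}$ is determined by $x$, so at most one index $j$ contributes. The support is therefore $\{\theta^{+}_j,\theta^{-}_j\}$, possibly together with $2r+2$.

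\textbf{Step 3 (main obstacle): the possible extra eigenvalue $2r+2$.}
\begin{itemize}
\item If $2r+2\notin\Phi_{ab}$, the pair $\theta^{\pm}_j$ lies entirely in $\Phi^+_{ab,cd}$ or entirely in $\Phi^-_{ab,cd}$. So one of these sets is empty, contradicting Lemma~\ref{fixed state}.
\item If $2r+2\in\Phi_{ab}$, then $2r+2=\frac12(x+0\cdot\sqrt\Delta)$ forces $x=4r+4$. Hence $\theta_j=\frac{3r+2}{2}$ and $\theta^{\pm}_j=\frac{4r+4\pm\sqrt{(r+2)^2-4\theta_j}}{2}$. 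With $(r+2)^2-4\theta_j=r^2-2r=r(r-2)$ and $y_\pm=\pm y$, we have $y_\pm\sqrt\Delta=\pm\sqrt{r(r-2)}$, and $r(r-2)$ is a nonsquare for $r\ge3$, so this is consistent so far. Now use Theorem~\ref{Conditions for the existence of perfect pair state transfer}(iii). The differences from $\theta_0:=2r+2$ are $\pm\frac{y}{2}\sqrt\Delta$, so $g=\frac{|y|}{2}$ and both quotients $(\theta_0-\theta^{\pm}_j)/(g\sqrt\Delta)$ equal $\mp1$, which is odd. Thus $\theta^{\pm}_j\in\Phi^-_{ab,cd}$ while $2r+2\in\Phi^+_{ab,cd}$. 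This is consistent, so a genuinely new argument is needed here.
\end{itemize}

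I would kill this case by also checking the integrality of $\theta_j=\frac{3r+2}{2}$ (which requires $r$ even), and then exploiting strong cospectrality at $2r+2$. The projector $E_{2r+2}$ is the projection onto $\ker R_G$, and one would need
\[P(\textbf{e}^m_a-\textbf{e}^m_b)=P(\textbf{e}^m_c-\textbf{e}^m_d),\]
where $P=E_{2r+2}$, together with the opposite sign relation at $\theta^{\pm}_j$. Combining these gives
\[\textbf{e}^m_a-\textbf{e}^m_b+\textbf{e}^m_c-\textbf{e}^m_d\in\ker R_G\oplus\ldots\]
Via the block form of $E_{\theta^{\pm}_j}$, one then tries to show that $R_G(\textbf{e}^m_a-\textbf{e}^m_b)$ is a $\theta_j$-eigenvector of $L_G$. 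A vector $\textbf{e}_u+\textbf{e}_v-\textbf{e}_w-\textbf{e}_z$ being a Laplacian eigenvector of $G$ with eigenvalue $\frac{3r+2}{2}$ should then be impossible by checking entries at $u$ and at its neighbours. I expect this final entrywise check to be the delicate part.
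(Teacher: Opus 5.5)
Your Steps 1 and 2 and the first bullet of Step 3 are correct and follow the paper's route. The paper reduces to $\Phi_{ab}\subseteq\{\theta^{\pm}_j\colon 1\le j\le p-1\}\cup\{2r+2\}$ and then says only that the rest is similar to Lemma~\ref{non-existence of PST 1st Lemma}. You are right that this does not dispose of the configuration $\Phi_{ab}=\{\theta^{+}_j,\theta^{-}_j,2r+2\}$ with $\theta_j=\frac{3r+2}{2}$. For instance, $r=4$ and $\theta_j=7$ give $\{10-\sqrt{2},10,10+\sqrt{2}\}$, which is compatible with conditions (ii) and (iii) of Theorem~\ref{Conditions for the existence of perfect pair state transfer}; the paper does not treat this case explicitly. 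However, your proposal does not exclude it either. The final paragraph is only a plan (``one then tries to show'', ``should then be impossible''), and the relation ending in ``$\ker R_G\oplus\ldots$'' is not an argument. As written, there is a gap in exactly the one case that needs a new idea.

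The missing step is short and uses neither strong cospectrality nor the integrality of $\theta_j$. Put $y=R_G(\textbf{e}^m_a-\textbf{e}^m_b)$. By the block form in Theorem~\ref{Total graph spectra}, $E_{\theta^{\pm}_k}(\mathcal{T}(G))(\textbf{e}^{n+m}_a-\textbf{e}^{n+m}_b)=\textbf{0}_{n+m}$ if and only if $R^T_GE_{\theta_k}(G)y=\textbf{0}_m$. This holds if and only if $E_{\theta_k}(G)y=\textbf{0}_n$, because $\|E_{\theta_k}(G)y\|^2=(\textbf{e}^m_a-\textbf{e}^m_b)^TR^T_GE_{\theta_k}(G)y$. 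Hence $\theta^{\pm}_k\notin\Phi_{ab}$ for $0\le k\le p-1$, $k\ne j$, gives $E_{\theta_k}(G)y=\textbf{0}_n$. Moreover, $E_{2r}(G)y=\textbf{0}_n$ holds automatically by Equation~(\ref{f_1}), since each edge has one end in $X_1$ and one in $X_2$. Therefore $y=E_{\theta_j}(G)y$, that is, $L_Gy=\frac{3r+2}{2}\,y$. Now let $u$ be an endpoint of $a$ that is not an endpoint of $b$. Then $y_u=1$, every entry of $y$ lies in $\{-1,0,1\}$, and at most two vertices (endpoints of $b$) carry $-1$. So $(L_Gy)_u=r-\sum_{w\sim u}y_w\le r+2<\frac{3r+2}{2}$ because $r\ge 3$. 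This contradicts $(L_Gy)_u=\frac{3r+2}{2}\,y_u$ and finishes the proof. The same argument closes the identical case in Lemma~\ref{non-existence of PST 3rd Lemma}.
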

\begin{proof}
Let $\mathcal{T}(G)$ exhibit Pair-LPST between $\textbf{e}^{n+m}_a - \textbf{e}^{n+m}_b$ and $\textbf{e}^{n+m}_c - \textbf{e}^{n+m}_d$. Since $a$ and $b$ are two edges of $G$, we obtain from Theorem~\ref{Total graph spectra} and Equation~(\ref{x_3}) that $\theta^{+}_0,\theta^{-}_0,3r \notin \Phi_{ab}$. This implies that  $\Phi_{ab} \subseteq \{\theta^{\pm }_j \colon 1 \leq j \leq p-1\} \cup \{2r+2\}$. 
Now the rest of the proof is similar to that of Lemma~\ref{non-existence of PST 1st Lemma}.
\end{proof}

Let $G$ be an $r$-regular connected graph with $r \geq 3$. Also, let $a$ be a vertex and $b$ be an edge of $G$. The next two lemmas tell us that if $G$ is Laplacian integral and $r+1$ is not a Laplacian eigenvalue of $G$, then the pair state $\textbf{e}^{n+m}_a - \textbf{e}^{n+m}_b$ of the total graph of  $G$ cannot be involved in Pair-LPST.  

\begin{lema}\label{non-existence of PST 5th Lemma}
Let $a$ be a vertex and $b$ be an edge of an $r$-regular Laplacian integral connected non-bipartite graph $G$ such that $r \geq 3$. Also, let $r+1$ not be a Laplacian eigenvalue of $G$. Then the pair state $\textbf{e}^{n+m}_a - \textbf{e}^{n+m}_b$ of $\mathcal{T}(G)$ cannot be involved in Pair-LPST. 
\end{lema}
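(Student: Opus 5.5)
Suppose, for contradiction, that $\mathcal{T}(G)$ exhibits Pair-LPST between $\textbf{e}^{n+m}_a - \textbf{e}^{n+m}_b$ and some pair state $\textbf{e}^{n+m}_c - \textbf{e}^{n+m}_d$. The plan is to show that $\Phi_{ab}$ cannot satisfy condition (ii) of Theorem~\ref{Conditions for the existence of perfect pair state transfer}.

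\textbf{Step 1: $r+2$ lies in $\Phi_{ab}$.} By Equation~(\ref{x_3}), the vector $E_{r+2}(\mathcal{T}(G))(\textbf{e}^{n+m}_a - \textbf{e}^{n+m}_b)$ has first block $\frac{1}{r^2+2r}\cdot\frac{r^2+2r}{n}\textbf{1}_n$ and second block $\frac{1}{r^2+2r}\cdot\frac{-2r-4}{n}\textbf{1}_m$. Both blocks are nonzero, so $r+2=\theta^+_0\in\Phi_{ab}$. Since $r+2$ is an integer, condition (ii) forces every element of $\Phi_{ab}$ to be an integer. (The quadratic alternative $\frac12(x+y\sqrt{\Delta})$ with one value rational would require $y=0$, i.e.\ the even-$y$ family. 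That family, with a rational member, cannot contain an irrational $\theta^\pm_j$ with odd $y$. I will check this carefully, and would simply argue that $\Phi_{ab}\subseteq\mathbb{Z}$ is forced.)

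\textbf{Step 2: rule out every other eigenvalue except $2r+2$.} Fix $j\geq 1$ (with $j\le p-1$ in the bipartite case, though here $G$ is non-bipartite). Because $G$ is Laplacian integral, $\theta_j\in\mathbb{Z}$, and $\theta^\pm_j$ is an integer only if $(r+2)^2-4\theta_j$ is a perfect square of the same parity as $r$. Since $0<\theta_j<2r$ (non-bipartite), the argument of Lemma~\ref{non-existence of PST 1st Lemma} gives $\sqrt{(r+2)^2-4\theta_j}\in\{r-1,r,r+1\}$. Parity leaves only the value $r$, which means $\theta_j=r+1$, and this is excluded by hypothesis. Hence each $\theta^\pm_j$ with $j\ge1$ is irrational, so $\theta^\pm_j\notin\Phi_{ab}$. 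We also have $\theta^-_0=0\notin\Phi_{ab}$, because the pair state is orthogonal to $\textbf{1}$. Therefore $\Phi_{ab}\subseteq\{r+2,\,2r+2\}$.

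\textbf{Step 3: derive the contradiction from the two remaining eigenvalues.} By Lemma~\ref{fixed state}, $\Phi_{ab}=\{r+2,2r+2\}$, with one eigenvalue in $\Phi^+_{ab,cd}$ and the other in $\Phi^-_{ab,cd}$. Then $\textbf{e}^{n+m}_a - \textbf{e}^{n+m}_b$ lies in the sum of the two eigenspaces $E_{r+2}$ and $E_{2r+2}$, which is impossible. The first-block ($V(G)$) part of any vector in that sum is a multiple of $\textbf{1}_n$, since the $E_{2r+2}$ component has zero first block. But the first block of our state is $\textbf{e}^n_a$, which is not a multiple of $\textbf{1}_n$ because $n\ge 4$. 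Equivalently, $\sum_{j\ge1}E_{\theta^\pm_j}$ applied to the state is nonzero, since its first block is $\textbf{e}^n_a-\frac1n\textbf{1}_n\neq 0$. So some $\theta^\pm_j$ with $j\ge1$ lies in $\Phi_{ab}$, contradicting Step 2.

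\textbf{Main obstacle.} The delicate point is Step 1's reduction to integers. Condition (ii) of Theorem~\ref{Conditions for the existence of perfect pair state transfer} allows a quadratic family, so one must confirm that the presence of the integer $r+2$ together with the irrational $\theta^\pm_j$ (whose $\sqrt{\Delta}$-coefficients are $\pm1$ times an odd or even $y$) is incompatible. The way through is to note that $r+2$ corresponds to $y=0$, which is even, while $\theta^\pm_j$ have $y_\pm=\pm y$ for a common $y$. If $y$ is even, condition (ii) is consistent, so this case needs care. If that case survives, I would instead use condition (iii) with $g$ to get a contradiction, or pass to the argument of Lemma~\ref{non-existence of PST 1st Lemma} (Equations~(\ref{b_1})--(\ref{b_2})), which shows $y_++y_-=0$ and leads to the same kind of contradiction.
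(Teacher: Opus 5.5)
Your three steps follow the paper's proof. First, $r+2=\theta^+_0\in\Phi_{ab}$, so (as claimed) $\Phi_{ab}\subseteq\mathbb{Z}$. Next, the $\{r-1,r,r+1\}$ argument of Lemma~\ref{non-existence of PST 1st Lemma} removes every $\theta^\pm_j$ with $j\geq 1$. Finally, $\Phi_{ab}=\{r+2,2r+2\}$ is impossible: the paper reads off the $a$-th entry to get $\frac1n=1$, which is your observation that the first block would be a multiple of $\textbf{1}_n$.

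However, the step you flag is a genuine gap. The paper passes over it with the same one-line appeal to Theorem~\ref{Conditions for the existence of perfect pair state transfer}. An integer in $\Phi_{ab}$ does not force integrality; in condition (ii) it only forces $x=2r+4$ and $y$ even. Suppose $r$ is even and $\theta_j=\frac{r+2}{2}$ is a Laplacian eigenvalue of $G$. Then $\theta^\pm_j=r+2\pm\frac12\sqrt{r(r+2)}$. Writing $r=2s$ and $s(s+1)=k^2\Delta$, you get $\{r+2,\theta^+_j,\theta^-_j\}=\{\frac12(2r+4+y\sqrt{\Delta})\colon y\in\{0,\pm 2k\}\}$. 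This set satisfies (ii), and also (iii) with $g=k$, $r+2$ in one sign class and $\theta^\pm_j$ in the other. It occurs within the lemma's hypotheses: for $G=K_3\square K_3$ ($r=4$, Laplacian eigenvalues $0,3,6$) the support $\{6,6\pm\sqrt6\}$ is admissible.

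Neither of your fallbacks closes this. Condition (iii) holds, as just shown. The computation of Lemma~\ref{non-existence of PST 1st Lemma} only gives $\theta_j=\frac{x-r-2}{2}=\frac{r+2}{2}$ and $\Phi_{ab}\subseteq\{r+2,\theta^+_j,\theta^-_j\}$. No sign class is empty: even if $\theta^\pm_j$ share a sign, $r+2$ can carry the other one, so Lemma~\ref{fixed state} is not violated.

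The missing idea is a sharper form of your Step~3.
\begin{itemize}
\item The first block of $E_{\theta^\pm_k}(\mathcal{T}(G))v$ lies in the image of $E_{\theta_k}(G)$, and $E_{2r+2}(\mathcal{T}(G))$ has zero first block. Applying $E_{\theta_k}(G)$ to the first block of $\sum_\theta E_\theta(\mathcal{T}(G))=I$ therefore shows that $E_{\theta_k}(G)\textbf{e}^n_a$ equals the first block of $\bigl(E_{\theta^+_k}(\mathcal{T}(G))+E_{\theta^-_k}(\mathcal{T}(G))\bigr)(\textbf{e}^{n+m}_a-\textbf{e}^{n+m}_b)$.
\item Hence every nonzero $\theta_k$ in the Laplacian support of $\textbf{e}^n_a$ in $G$ puts $\theta^+_k$ or $\theta^-_k$ into $\Phi_{ab}$.
\item In the quadratic case these $\theta^\pm_k$ are irrational; your Step~2 shows this without using integrality of $\Phi_{ab}$. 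Matching rational parts then forces $r+2+2\theta_k=2r+4$, so $\theta_k=\frac{r+2}{2}$ is the only such eigenvalue.
\item Thus $\textbf{e}^n_a-\frac1n\textbf{1}_n=E_{\theta_k}(G)\textbf{e}^n_a$ is a nonzero $\theta_k$-eigenvector of $L_G$.
\item But $L_G\textbf{e}^n_a$ vanishes at every non-neighbour of $a$, whereas $\theta_k(\textbf{e}^n_a-\frac1n\textbf{1}_n)$ equals $-\theta_k/n\neq 0$ there. So $G=K_n$, and then $r+1=n$ is a Laplacian eigenvalue, contrary to hypothesis.
\end{itemize}
Once this case is excluded, $\Phi_{ab}\subseteq\mathbb{Z}$ holds, and your Steps~2 and~3 complete the proof.
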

\begin{proof}
Let $\mathcal{T}(G)$ exhibit Pair-LPST between $\textbf{e}^{n+m}_a - \textbf{e}^{n+m}_b$ and $\textbf{e}^{n+m}_c - \textbf{e}^{n+m}_d$. Since $a$ is a vertex and $b$ is an edge of $G$, from Equation~(\ref{x_3}) we have
$$E_{\theta^{+}_0}(\mathcal{T}(G))(\textbf{e}^{n+m}_a - \textbf{e}^{n+m}_b) \neq \textbf{0}_{n+m}.$$
This implies that $\theta^{+}_0 \in \Phi_{ab}$. Since $\theta^{+}_0=r+2$, an integer, and $\textbf{e}^{n+m}_a - \textbf{e}^{n+m}_b$ involves in Pair-LPST, Theorem~\ref{Conditions for the existence of perfect pair state transfer} yields that all the eigenvalues in $\Phi_{ab}$ are integers. If possible, let $\theta^{+}_j \in \Phi_{ab}$ for some $j$ with $1 \leq j \leq p$. From Theorem~\ref{Total graph spectra}, recall that 
\begin{equation}\label{a_1}
\theta^{\pm }_j=\frac{r+2+2\theta_j \pm \sqrt{(r+2)^2-4\theta_j}}{2}.
\end{equation} 
Thus $\theta^{+}_j + \theta^{-}_j = r+2+2 \theta_j$. Since  
$\theta_j$ is an integer, we obtain that $\theta^{-}_j$ is also an integer. Similarly, if we assume that $\theta^{-}_j \in \Phi_{ab}$ for some $j$ with $1 \leq j \leq p$, then $\theta^{+}_j$ is also an integer. Now proceeding as in the second paragraph in the proof of Lemma~\ref{non-existence of PST 1st Lemma}, we arrive at a contradiction. From Theorem~\ref{fixed state}, $\Phi_{ab}$ contains at least two distinct eigenvalues. Thus we have $\Phi_{ab} = \{r+2,2r+2\}$. Therefore, post-multiplying by $\textbf{e}^{n+m}_a - \textbf{e}^{n+m}_b$ on the both sides of 
$$E_{r+2}(\mathcal{T}(G))+E_{2r+2}(\mathcal{T}(G))+\sum_{j=1}^{p}E_{\theta^{+}_j}(\mathcal{T}(G)) + \sum_{j=0}^{p}E_{\theta^{-}_j}(\mathcal{T}(G)) =I_{(n+m) \times (n+m)},$$
we have
$$\left(E_{r+2}(\mathcal{T}(G))+E_{2r+2}(\mathcal{T}(G))\right)\left(\textbf{e}^{n+m}_a - \textbf{e}^{n+m}_b\right)=\textbf{e}^{n+m}_a - \textbf{e}^{n+m}_b.$$
This further implies 
$$E_{r+2}(\mathcal{T}(G))_{aa}-E_{r+2}(\mathcal{T}(G))_{ab}+E_{2r+2}(\mathcal{T}(G))_{aa}-E_{2r+2}(\mathcal{T}(G))_{ab} = 1.$$
Now it follows that $n =1$, which is a contradiction. Therefore, $\textbf{e}^{n+m}_a - \textbf{e}^{n+m}_b$ cannot be involved in Pair-LPST.  
\end{proof}

\begin{lema}\label{non-existence of PST 6th Lemma}
Let $a$ be a vertex and $b$ be an edge of an $r$-regular Laplacian integral connected bipartite graph $G$ such that $r \geq 3$. Also, let $r+1$ not be a Laplacian eigenvalue of $G$. Then the pair state $\textbf{e}^{n+m}_a - \textbf{e}^{n+m}_b$ of $\mathcal{T}(G)$ cannot be involved in Pair-LPST. 
\end{lema}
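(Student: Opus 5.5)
I will follow the proof of Lemma~\ref{non-existence of PST 5th Lemma}, adapting it to the bipartite spectrum given in part (ii) of Theorem~\ref{Total graph spectra}. In this case the Laplacian eigenvalues of $\mathcal{T}(G)$ are $\theta^{\pm}_j$ for $0 \le j \le p-1$, together with $3r$ and $2r+2$. Here $\theta_p = 2r$ is the largest Laplacian eigenvalue of $G$.

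Suppose $\mathcal{T}(G)$ exhibits Pair-LPST between $\textbf{e}^{n+m}_a - \textbf{e}^{n+m}_b$ and $\textbf{e}^{n+m}_c - \textbf{e}^{n+m}_d$. By Equation~(\ref{x_3}), $E_{r+2}(\mathcal{T}(G))(\textbf{e}^{n+m}_a-\textbf{e}^{n+m}_b)$ has first block $\frac{1}{r^2+2r}\cdot\frac{r^2+2r}{n}\textbf{1}_n\neq \textbf{0}_n$, so $r+2=\theta^{+}_0\in\Phi_{ab}$. Since $r+2$ is an integer, Theorem~\ref{Conditions for the existence of perfect pair state transfer} forces every element of $\Phi_{ab}$ to be an integer.

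Next I rule out every $\theta^{\pm}_j$ with $1\le j\le p-1$. Suppose $\theta^{+}_j$ (or $\theta^{-}_j$) lies in $\Phi_{ab}$ and is an integer. Because $G$ is Laplacian integral, $\theta^{+}_j+\theta^{-}_j=r+2+2\theta_j$ shows the partner is also an integer. Hence $(r+2)^2-4\theta_j$ is a perfect square, and since $0<\theta_j<2r$ its square root lies in $\{r-1,r,r+1\}$. The values $r\pm1$ have the wrong parity relative to $r+2$, so $\theta^{\pm}_j\notin\mathbb{Z}$, a contradiction. The value $r$ gives $\theta_j=r+1$, which is excluded by hypothesis. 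This is exactly the argument of the second paragraph of the proof of Lemma~\ref{non-existence of PST 1st Lemma}. The eigenvalue $\theta^{-}_0=0$ never lies in any pair-state support, because the all-ones vector is orthogonal to pair states. Therefore $\Phi_{ab}\subseteq\{r+2,\,3r,\,2r+2\}$.

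It remains to show that this small support is impossible, and this is the main obstacle because there are now up to three eigenvalues rather than two. The projectors $E_{3r}(\mathcal{T}(G))$ and $E_{2r+2}(\mathcal{T}(G))$ are block-diagonal, supported on the vertex block and the edge block respectively. Summing the spectral decomposition applied to $\textbf{e}^{n+m}_a-\textbf{e}^{n+m}_b$, I get
\[
(E_{r+2}+E_{3r}+E_{2r+2})(\textbf{e}^{n+m}_a-\textbf{e}^{n+m}_b)=\textbf{e}^{n+m}_a-\textbf{e}^{n+m}_b.
\]
Take the $a$-th coordinate, where $a$ is a vertex of $G$. The $E_{2r+2}$ term contributes $0$, the $E_{3r}$ term contributes $Q_{aa}=\frac1n$, and the $E_{r+2}$ term contributes $\frac{1}{r^2+2r}\bigl(\frac{r^2}{n}+\frac{2r}{n}\bigr)=\frac1n$. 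This yields $\frac{2}{n}=1$, so $n=2$. That contradicts $r\ge3$, since an $r$-regular graph has at least $r+1\ge 4$ vertices. I should double-check the sign of the $(a,b)$ entry of the off-diagonal block of $E_{r+2}$, since it carries the $-\frac{2r}{n}$ factor; the $a$-th coordinate of the image is $\frac{1}{r^2+2r}\bigl(\frac{r^2}{n}-(-\frac{2r}{n})\bigr)=\frac1n$, which confirms the computation above. Hence $\textbf{e}^{n+m}_a-\textbf{e}^{n+m}_b$ cannot be involved in Pair-LPST.
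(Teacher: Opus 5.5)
Your route is the paper's: show $r+2\in\Phi_{ab}$, deduce that $\Phi_{ab}$ consists of integers, eliminate every $\theta^{\pm}_j$ with $1\le j\le p-1$ by the parity/$r+1$ argument, reduce to $\Phi_{ab}\subseteq\{r+2,3r,2r+2\}$, and read off $2/n=1$ from the $a$-th coordinate. That final computation is correct, including the sign of the off-diagonal block of $E_{r+2}(\mathcal{T}(G))$.

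The one step that does not follow as written is ``since $r+2$ is an integer, Theorem~\ref{Conditions for the existence of perfect pair state transfer} forces every element of $\Phi_{ab}$ to be an integer.'' The quadratic alternative of that theorem allows $y=0$, since zero is even. So it is compatible with $\Phi_{ab}$ containing exactly one rational number $x/2$ alongside irrational numbers $\frac{1}{2}(x+y\sqrt{\Delta})$, and a single integer in the support does not exclude it. For comparison, in the vertex analogue, $P_3$ has adjacency PST between its end vertices with eigenvalue support $\{0,\pm\sqrt{2}\}$. Your elimination of the $\theta^{\pm}_j$ only applies to integer values. Without integrality, irrational $\theta^{\pm}_j$ could survive, and the reduction to $\{r+2,3r,2r+2\}$ would not follow.

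The repair is exactly what the paper's proof records when it notes that $r+2,3r\in\Phi_{ab}$. Since $a$ is a vertex and $b$ is an edge, $E_{3r}(\mathcal{T}(G))(\textbf{e}^{n+m}_a-\textbf{e}^{n+m}_b)$ has vertex block $Q\textbf{e}^n_a\neq\textbf{0}_n$, so $3r\in\Phi_{ab}$ as well. In the quadratic alternative every element has rational part $x/2$ and is rational only when $y=0$, so $\Phi_{ab}$ can contain at most one rational number. As $r+2\neq 3r$ for $r\geq 3$, this forces $\Phi_{ab}\subseteq\mathbb{Z}$, and the rest of your argument then goes through unchanged.
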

\begin{proof}
Let $\mathcal{T}(G)$ exhibit Pair-LPST between $\textbf{e}^{n+m}_a - \textbf{e}^{n+m}_b$ and $\textbf{e}^{n+m}_c - \textbf{e}^{n+m}_d$. Clearly, $r+2,3r \in \Phi_{ab}$. Therefore, all the eigenvalues in $\Phi_{ab}$ are integers. Now proceeding as in the proof of Lemma~\ref{non-existence of PST 5th Lemma}, we obtain that $\Phi_{ab} \subseteq \{r+2,2r+2,3r\}$. Thus 
$$\left(E_{r+2}(\mathcal{T}(G))+E_{2r+2}(\mathcal{T}(G))+E_{3r}(\mathcal{T}(G))\right)(\textbf{e}^{n+m}_a - \textbf{e}^{n+m}_b)=\textbf{e}^{n+m}_a - \textbf{e}^{n+m}_b.$$
This implies that $n=2$, which is a contradiction. Therefore, $\textbf{e}^{n+m}_a - \textbf{e}^{n+m}_b$ cannot be involved in Pair-LPST.  
\end{proof}
Let $a$ be a vertex and $b$ be an edge of $G$. If the graph $G$ is Laplacian non-integral, then it is not yet known whether the pair state $\textbf{e}^{n+m}_a - \textbf{e}^{n+m}_b$ of $\mathcal{T}(G)$ can be involved in Pair-LPST or not. Further, if $G$ is the cycle on $n$ vertices, then it is not yet known whether the total graph of $G$ exhibits Pair-LPST or not. On combining Lemma~\ref{non-existence of PST 1st Lemma} to Lemma~\ref{non-existence of PST 6th Lemma}, we obtain the following theorem.
\begin{theorem}\label{non-existence of PST main result}
Let $G$ be an $r$-regular Laplacian integral connected graph such that $r \geq 3$. Also, let $r+1$ not be a Laplacian eigenvalue of $G$. Then the total graph of $G$ does not exhibit Pair-LPST. 
\end{theorem}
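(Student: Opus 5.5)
The theorem follows by combining Lemma~\ref{non-existence of PST 1st Lemma} through Lemma~\ref{non-existence of PST 6th Lemma}, so the plan is a case split on the types of the four elements involved. Suppose, for contradiction, that $\mathcal{T}(G)$ exhibits Pair-LPST between two linearly independent pair states $\textbf{e}^{n+m}_a - \textbf{e}^{n+m}_b$ and $\textbf{e}^{n+m}_c - \textbf{e}^{n+m}_d$. It suffices to show that the single state $\textbf{e}^{n+m}_a - \textbf{e}^{n+m}_b$ cannot be involved in Pair-LPST. Each of $a,b$ is either a vertex of $G$ or an edge of $G$, and $G$ is either bipartite or non-bipartite.

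The cases are dispatched as follows.
\begin{enumerate}
\item[(1)] If $a,b$ are both vertices of $G$, then Lemma~\ref{non-existence of PST 1st Lemma} applies when $G$ is non-bipartite, and Lemma~\ref{non-existence of PST 2nd Lemma} applies when $G$ is bipartite.
\item[(2)] If $a,b$ are both edges of $G$, then Lemma~\ref{non-existence of PST 3rd Lemma} applies in the non-bipartite case and Lemma~\ref{non-existence of PST 4th Lemma} in the bipartite case.
\item[(3)] If one of $a,b$ is a vertex and the other is an edge, I first reorder. Replacing the state by its negative $\textbf{e}^{n+m}_b - \textbf{e}^{n+m}_a$ does not change the modulus $\left|\frac{1}{2}(\textbf{e}^{n+m}_a - \textbf{e}^{n+m}_b)^T U_{\mathcal{T}(G)}(t)(\textbf{e}^{n+m}_c - \textbf{e}^{n+m}_d)\right|$. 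So I may assume $a$ is the vertex and $b$ is the edge. Lemma~\ref{non-existence of PST 5th Lemma} or Lemma~\ref{non-existence of PST 6th Lemma} then applies, according as $G$ is non-bipartite or bipartite. These two lemmas are the only ones that use the Laplacian integrality hypothesis, which is why it appears in the theorem.
\end{enumerate}

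Every one of these lemmas requires $r \geq 3$, $G$ connected and $r$-regular, and $r+1$ not a Laplacian eigenvalue of $G$; all of these are hypotheses of the theorem. Each case therefore yields a contradiction, so no Pair-LPST occurs in $\mathcal{T}(G)$.

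The only point needing care is that the case split is exhaustive and that the sign and symmetry reduction in case (3) is legitimate. Both are routine. I do not expect any real obstacle, because all the substantive spectral work is already contained in the six lemmas.
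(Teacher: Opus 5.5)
Your proposal is correct and is the paper's own argument: the theorem is obtained by combining Lemma~\ref{non-existence of PST 1st Lemma} through Lemma~\ref{non-existence of PST 6th Lemma}, split into vertex--vertex, edge--edge and vertex--edge pairs, each for bipartite and non-bipartite $G$. Your remark that swapping $a$ and $b$ only negates the pair state, and so leaves the modulus of the transfer amplitude unchanged, supplies the small symmetry step the paper leaves implicit.
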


\section{Laplacian pretty good pair state transfer on total graphs}\label{pretty good}
In this section, we prove that there is Pair-LPGST on total graphs of regular connected graphs, under some mild conditions. Recall that $G$ denotes an $r$-regular connected graph, where $r \geq 3$ and $\theta_0,\theta_1, \ldots, \theta_p$ are the distinct Laplacian eigenvalues of $G$ such that $0=\theta_0 < \theta_1 < \cdots < \theta_p$. Let $\Delta_j = \sqrt{(r + 2)^2 - 4 \theta_j}$, where $0 \leq j \leq p$ if $G$ is non-bipartite and $0 \leq j \leq p-1$ if $G$ is bipartite. In the following, we state a theorem of Liu and Wang~\cite{TotalGraph2021}.

\begin{theorem}\emph{\cite{TotalGraph2021}}\label{Total graph entry vertex state}
Let $a$ and $b$ be two vertices of an $r$-regular connected graph $G$ such that $r\geq3$. Then the following two conditions hold.
\begin{enumerate}
\item[(a)] If $G$ is non-bipartite, then
\begin{align*}
U_{\mathcal{T}(G)}(t)_{ab} =\sum_{j=0}^{p}\exp(-\textbf{i}t((r + 2\theta_j + 2)/2))E_{\theta_j}(G)_{ab}\left(\cos\left(\frac{\Delta_j t}{2}\right) + \textbf{i}\frac{2-r}{\Delta_j}\sin\left(\frac{\Delta_j t}{2}\right)\right).
\end{align*} 

\item[(b)] If $G$ is bipartite, then
\begin{align*}
&U_{\mathcal{T}(G)}(t)_{ab}
=\sum_{j=0}^{p-1}\exp(-\textbf{i}t((r+ 2\theta_j + 2)/2))E_{\theta_j}(G)_{ab}\left(\cos\left(\frac{\Delta_j t}{2}\right) + \textbf{i}\frac{2-r}{\Delta_j}\sin\left(\frac{\Delta_j t}{2}\right)\right)\\
&+ \exp(-\textbf{i}3tr)E_{2r}(G)_{ab}.
\end{align*}
\end{enumerate}
\end{theorem}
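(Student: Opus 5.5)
The plan is a direct spectral computation from Theorem~\ref{Total graph spectra}. Write $U_{\mathcal{T}(G)}(t)=\sum_{\lambda}\exp(-\mathbf{i}t\lambda)E_\lambda(\mathcal{T}(G))$ and read off the $(a,b)$ entry for vertices $a,b$ of $G$. For vertices, $E_{2r+2}(\mathcal{T}(G))$ has a zero upper-left block, so it contributes nothing. In the bipartite case, $E_{3r}(\mathcal{T}(G))$ contributes $\exp(-\mathbf{i}3rt)Q_{ab}$. Here $Q$ is exactly the eigenprojector $E_{2r}(G)$, because for a connected $r$-regular bipartite graph the top Laplacian eigenvalue is $\theta_p=2r$, with the normalized signed bipartition vector as eigenvector. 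This accounts for the extra term in part (b), and the sums then run over $j\le p$ or $j\le p-1$ as stated.

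For each remaining $j$, put $\alpha_\pm=2+\theta_j-\theta^{\pm}_j$. The upper-left block of $E_{\theta^\pm_j}(\mathcal{T}(G))$ is $c_\pm E_{\theta_j}(G)$, where $c_\pm=\alpha_\pm^2/(\alpha_\pm^2+2r-\theta_j)$. We have $\theta^\pm_j=\frac{r+2+2\theta_j}{2}\pm\frac{\Delta_j}{2}$, which gives $\alpha_\pm=\frac{2-r}{2}\mp\frac{\Delta_j}{2}$. The pair contribution is therefore
\[
E_{\theta_j}(G)_{ab}\exp\!\left(-\mathbf{i}t\tfrac{r+2\theta_j+2}{2}\right)\left(c_+e^{-\mathbf{i}t\Delta_j/2}+c_-e^{\mathbf{i}t\Delta_j/2}\right).
\]
It then suffices to show that $c_++c_-=1$ and $c_--c_+=\frac{2-r}{\Delta_j}$. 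With these, the bracket equals $\cos(\Delta_jt/2)+\mathbf{i}\frac{2-r}{\Delta_j}\sin(\Delta_jt/2)$, which is the claimed formula.

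The only real work is this algebra, and I expect it to be the main obstacle, routine as it is. The numbers $\alpha_\pm$ are the roots of $\alpha^2-(2-r)\alpha+(\theta_j-2r)=0$: their sum is $2-r$, and their product is $\frac{(2-r)^2-\Delta_j^2}{4}=\frac{(2-r)^2-(r+2)^2+4\theta_j}{4}=\theta_j-2r$. Hence $\alpha_\pm^2+2r-\theta_j=\alpha_\pm^2-\alpha_+\alpha_-$. This gives $\alpha_+^2+2r-\theta_j=\alpha_+(\alpha_+-\alpha_-)$ and $\alpha_-^2+2r-\theta_j=\alpha_-(\alpha_--\alpha_+)$. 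It follows that $c_+=\alpha_+/(\alpha_+-\alpha_-)$ and $c_-=-\alpha_-/(\alpha_+-\alpha_-)$, with $\alpha_+-\alpha_-=-\Delta_j$. Then $c_++c_-=1$ and $c_--c_+=-(\alpha_++\alpha_-)/(\alpha_+-\alpha_-)=(2-r)/\Delta_j$, as required.

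The computation needs $\alpha_\pm\neq0$ and $\Delta_j\neq0$. The first holds because $\theta_j\neq 2r$ in the range of $j$ used; this is the point of excluding $j=p$ in the bipartite case. The second holds since $\Delta_j^2>(r-2)^2>0$ for $r\ge3$, as noted in the proof of Lemma~\ref{perfect state transfer lemma non-bipartite}.
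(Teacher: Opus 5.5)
Your proof is correct. The paper itself gives no proof of this statement and simply quotes it from Liu and Wang, and your spectral computation from Theorem~\ref{Total graph spectra} is the natural derivation: dropping $E_{2r+2}$, identifying $Q$ with $E_{2r}(G)$, and using $\alpha_+\alpha_-=\theta_j-2r$ to get $c_\pm=\tfrac12\pm\tfrac{r-2}{2\Delta_j}$.

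One point is worth stating explicitly. The $\theta^{\pm}_j$ can coincide with one another or with $2r+2$; for example, $\theta^-_j=r+2=\theta^+_0$ when $\theta_j=r+1$. In that case the matrices listed in Theorem~\ref{Total graph spectra} are only summands of the true eigenprojectors. Your expansion of $U_{\mathcal{T}(G)}(t)$ still holds, because these matrices are mutually orthogonal projections that sum to the identity, and $L_{\mathcal{T}(G)}$ is the corresponding weighted sum of them.
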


Using Theorem~\ref{Total graph entry vertex state}, we prove the following lemma. The proof is straightforward and hence details of the proof are omitted.
\begin{lema}\label{Total graph entry pair state}
Let $a,b,c$ and $d$ be vertices of an $r$-regular connected graph $G$ such that $r\geq3$. Then the following two conditions hold.
\begin{enumerate}
\item[(a)] If $G$ is non-bipartite, then
\begin{align*}
&\frac{1}{2}{\left(\textbf{e}^{n+m}_a - \textbf{e}^{n+m}_b\right)}^TU_{\mathcal{T}(G)}(t)\left(\textbf{e}^{n+m}_c - \textbf{e}^{n+m}_d\right)\\ 
=&\frac{1}{2}\sum_{j=0}^{p}
\exp(-\textbf{i}t((r+2\theta_j+2)/2))
{\left(\textbf{e}^n_{a} - \textbf{e}^n_{b}\right)}^TE_{\theta_j}(G)\left(\textbf{e}^n_{c} - \textbf{e}^n_{d}\right)\\
&\times \left(\cos\left(\frac{\Delta_j t}{2}\right) + \textbf{i}\frac{2-r}{\Delta_j}\sin\left(\frac{\Delta_j t}{2}\right)\right).
\end{align*}

\item[(b)] If $G$ is bipartite, then
\begin{align*}
&\frac{1}{2}{\left(\textbf{e}^{n+m}_a - \textbf{e}^{n+m}_b\right)}^TU_{\mathcal{T}(G)}(t)\left(\textbf{e}^{n+m}_c - \textbf{e}^{n+m}_d\right)\\ 
=&\frac{1}{2}\sum_{j=0}^{p-1}
\exp(-\textbf{i}t((r+2\theta_j+2)/2))
{\left(\textbf{e}^n_{a} - \textbf{e}^n_{b}\right)}^TE_{\theta_j}(G)\left(\textbf{e}^n_{c} - \textbf{e}^n_{d}\right)\\
&\times\left(\cos\left(\frac{\Delta_j t}{2}\right) + \textbf{i}\frac{2-r}{\Delta_j}\sin\left(\frac{\Delta_j t}{2}\right)\right) + \frac{1}{2} \exp(-\textbf{i}3tr){\left(\textbf{e}^n_{a} - \textbf{e}^n_{b}\right)}^TE_{2r}(G)\left(\textbf{e}^n_{c} - \textbf{e}^n_{d}\right).
\end{align*}
\end{enumerate}
\end{lema}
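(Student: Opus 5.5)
The statement follows from Theorem~\ref{Total graph entry vertex state} by bilinearity. I will expand the pair-state amplitude entrywise and then group terms by $j$. Writing $\textbf{e}^{n+m}_a - \textbf{e}^{n+m}_b$ and $\textbf{e}^{n+m}_c - \textbf{e}^{n+m}_d$ as vertex states of $\mathcal{T}(G)$ supported on the first $n$ coordinates, we have
\begin{align*}
\left(\textbf{e}^{n+m}_a - \textbf{e}^{n+m}_b\right)^TU_{\mathcal{T}(G)}(t)\left(\textbf{e}^{n+m}_c - \textbf{e}^{n+m}_d\right) = U_{\mathcal{T}(G)}(t)_{ac}-U_{\mathcal{T}(G)}(t)_{ad}-U_{\mathcal{T}(G)}(t)_{bc}+U_{\mathcal{T}(G)}(t)_{bd}.
\end{align*}
All four indices are vertices of $G$, so each entry is given by Theorem~\ref{Total graph entry vertex state}. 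Part (a) applies when $G$ is non-bipartite and part (b) when $G$ is bipartite.

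Next, substitute these expressions. For each fixed $j$, the scalar factor
\begin{align*}
\exp(-\textbf{i}t((r+2\theta_j+2)/2))\left(\cos\left(\frac{\Delta_j t}{2}\right) + \textbf{i}\frac{2-r}{\Delta_j}\sin\left(\frac{\Delta_j t}{2}\right)\right)
\end{align*}
is independent of the pair of vertex indices, so it factors out of the four-term combination. What remains multiplying it is
\begin{align*}
E_{\theta_j}(G)_{ac}-E_{\theta_j}(G)_{ad}-E_{\theta_j}(G)_{bc}+E_{\theta_j}(G)_{bd} = \left(\textbf{e}^n_a-\textbf{e}^n_b\right)^TE_{\theta_j}(G)\left(\textbf{e}^n_c-\textbf{e}^n_d\right).
\end{align*}
Multiplying through by $\frac{1}{2}$ gives (a). In the bipartite case the extra term $\exp(-\textbf{i}3tr)E_{2r}(G)_{xy}$ combines in the same way into $\exp(-\textbf{i}3tr)\left(\textbf{e}^n_a-\textbf{e}^n_b\right)^TE_{2r}(G)\left(\textbf{e}^n_c-\textbf{e}^n_d\right)$, which gives (b).

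There is no real obstacle. The one point to check is that the scalar coefficients in Theorem~\ref{Total graph entry vertex state} do not depend on $a,b$, which is true because they involve only $t$, $r$ and $\theta_j$. It also helps to note that the $j=0$ term vanishes, since $E_{\theta_0}(G)=\frac{1}{n}J_{n\times n}$ annihilates $\textbf{e}^n_c-\textbf{e}^n_d$. This is harmless for the identity but will be useful later.
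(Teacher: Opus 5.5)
Your proposal is correct and is the argument the paper has in mind (the paper omits the proof as straightforward). You expand the amplitude into $U_{\mathcal{T}(G)}(t)_{ac}-U_{\mathcal{T}(G)}(t)_{ad}-U_{\mathcal{T}(G)}(t)_{bc}+U_{\mathcal{T}(G)}(t)_{bd}$, apply Theorem~\ref{Total graph entry vertex state} to each entry, and regroup by $j$; the theorem's formula comes from the top-left blocks of the eigenprojectors, so it holds for every pair of vertices of $G$, including coincident ones such as $a=c$, and your observation that the $j=0$ term vanishes is also correct.
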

The following theorem provides sufficient conditions for the existence of Pair-LPGST on the total graph of a regular connected non-bipartite graph.
\begin{theorem}\label{First theorem PGST}
Let $G$ be an $r$-regular connected non-bipartite graph such that $r\geq3$. Also, let $G$ exhibit Pair-LPST between $\textbf{e}^n_a - \textbf{e}^n_b$ and $\textbf{e}^n_c - \textbf{e}^n_d$ at $\frac{\pi}{2}$. If $r+1$ is not a Laplacian eigenvalue of $G$ and $\frac{r+2}{4}$ is an integer, then $\mathcal{T}(G)$ exhibits Pair-LPGST between $\textbf{e}^{n+m}_a - \textbf{e}^{n+m}_b$ and $\textbf{e}^{n+m}_c - \textbf{e}^{n+m}_d$.
\end{theorem}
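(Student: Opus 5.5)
The plan is to compare the pair-state amplitude in $\mathcal{T}(G)$, given by Lemma~\ref{Total graph entry pair state}(a), with the corresponding amplitude in $G$. We will choose times of the form $t = \frac{\pi}{2} + 4\pi k$ with $k \in \mathbb{Z}$, and then use Kronecker's theorem to make every factor $\cos(\Delta_j t/2) + \mathbf{i}\frac{2-r}{\Delta_j}\sin(\Delta_j t/2)$ close to $1$. Once that is done, the $\mathcal{T}(G)$-amplitude essentially reduces to a unimodular multiple of the $G$-amplitude at $\pi/2$, which has modulus $1$.

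\textbf{Step 1: consequences of Pair-LPST in $G$.} By Theorem~\ref{More conditions for the existence of perfect pair state transfer}, the pair states $\textbf{e}^n_a-\textbf{e}^n_b$ and $\textbf{e}^n_c-\textbf{e}^n_d$ are Laplacian strongly cospectral in $G$. Moreover, $\frac{\pi}{2}(\theta_i-\theta_j) \in 2\pi\mathbb{Z}$ when $\theta_i,\theta_j$ lie in the same set $\Phi^{\pm}$, and it lies in $\pi+2\pi\mathbb{Z}$ otherwise. Hence all differences of eigenvalues in the support are even integers. Equivalently, there is a unimodular $\gamma$ with $e^{-\mathbf{i}\pi\theta_j/2}=\gamma\,\epsilon_j$ on the support, where $\epsilon_j=\pm1$ records membership in $\Phi^{\pm}$. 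This gives
\[
\tfrac12(\textbf{e}^n_a-\textbf{e}^n_b)^TU_G(\tfrac{\pi}{2})(\textbf{e}^n_c-\textbf{e}^n_d)=\tfrac{\gamma}{2}\sum_j \epsilon_j(\textbf{e}^n_a-\textbf{e}^n_b)^TE_{\theta_j}(G)(\textbf{e}^n_c-\textbf{e}^n_d),
\]
and this has modulus $1$. I also want every $\theta_j$ in the support to be an integer. If one were irrational, Theorem~\ref{Conditions for the existence of perfect pair state transfer}(ii) would make it quadratic. Its Galois conjugate would then also be a Laplacian eigenvalue lying in the support (the projector of a conjugate eigenvalue is the conjugate rational-entried projector). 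But the difference between the two conjugates is irrational, contradicting the even-integer differences. I expect this step to need some care.

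\textbf{Step 2: phases.} Take $t=\frac{\pi}{2}+4\pi k$. Since $\frac{r+2}{4}\in\mathbb{Z}$, the factor $e^{-\mathbf{i}t(r+2)/2}$ is a constant independent of $k$. Also $e^{-\mathbf{i}t\theta_j}=e^{-\mathbf{i}\pi\theta_j/2}e^{-4\pi\mathbf{i}k\theta_j}=\gamma\epsilon_j$, because $\theta_j$ is an integer. The term $j=0$ contributes nothing, since $E_{\theta_0}(G)=\frac1nJ$ annihilates pair states. So if the trigonometric factors are all $\approx 1$, the $\mathcal{T}(G)$-amplitude is approximately a unimodular constant times the $G$-amplitude above, and its modulus is approximately $1$. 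That is exactly Pair-LPGST.

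\textbf{Step 3: Kronecker (main obstacle).} We need $\Delta_j t/2=\frac{\pi\Delta_j}{4}+2\pi k\Delta_j\approx 0 \pmod{2\pi}$, that is, $k\Delta_j+\Delta_j/8$ close to an integer, for every $j\ge1$ in the support. Here $\Delta_j^2=(r+2)^2-4\theta_j$ is an integer. It is not a perfect square: since $0<\theta_j<2r$, a square root would lie in $\{r-1,r,r+1\}$, and the cases $r\pm1$ are excluded by the integrality/parity argument in the proof of Lemma~\ref{non-existence of PST 1st Lemma}, while $r$ gives $\theta_j=r+1$, which is excluded by hypothesis. So write $\Delta_j=c_j\sqrt{s_j}$ with $c_j\in\mathbb{Z}_{>0}$ and $s_j>1$ square-free. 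Let $s^{(1)},\dots,s^{(q)}$ be the distinct values of $s_j$. By Lemma~\ref{Linearly independent corollary}, the numbers $1,\sqrt{s^{(1)}},\dots,\sqrt{s^{(q)}}$ are linearly independent over $\mathbb{Q}$. Theorem~\ref{Kronecker approximation theorem}, applied with targets $\alpha_i=-\sqrt{s^{(i)}}/8$, gives $k$ and integers $m_i$ with $k\sqrt{s^{(i)}}-m_i+\sqrt{s^{(i)}}/8\approx0$. Multiplying by $c_j$ keeps the error small, because there are finitely many $j$. Hence $\cos(\Delta_jt/2)\approx1$ and $\sin(\Delta_jt/2)\approx0$ for all $j$ simultaneously. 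The step I expect to be the real obstacle is justifying this repeated square-free parts device cleanly, and in particular that Kronecker supplies a nonzero $t$ (this can be ensured by shrinking $\varepsilon$). Combining the three steps gives Pair-LPGST between $\textbf{e}^{n+m}_a-\textbf{e}^{n+m}_b$ and $\textbf{e}^{n+m}_c-\textbf{e}^{n+m}_d$.
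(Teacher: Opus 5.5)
Your proposal is correct and follows the paper's proof essentially step for step. It uses the same times $t=(4\ell+\frac12)\pi$, the same Kronecker targets $-\sqrt{y_j}/8$ for the square-free parts of the $\Delta_j$, and the same reduction of the phases to $U_G(\pi/2)$ up to a global unimodular factor.

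The deviations are minor. You prove integrality of the support via Galois conjugates and even-integer differences, whereas the paper observes that $\frac{\pi}{2}$ cannot be a rational multiple of $\frac{\pi}{g\sqrt{\Delta}}$. You also group equal square-free parts explicitly, which the paper glosses over. Finally, your worry about $t\neq 0$ is unnecessary, since $\frac{\pi}{2}+4\pi k$ never vanishes for integer $k$.
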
    
\begin{proof}
Let $\Phi_{ab}$ be the Laplacian eigenvalue support of $\textbf{e}^n_a - \textbf{e}^n_b$ in $G$. It is clear that $0 \notin \Phi_{ab}$. Suppose that there exists an integer $x$ and a square-free integer $\Delta$ strictly greater than 1 such that $\Phi_{ab} \subseteq \left\{\frac{1}{2}(x + y \sqrt{\Delta}) \colon ~y~\mathrm{is~an~integer}\right \}$. Since $G$ exhibits Pair-LPST between $\textbf{e}^n_a - \textbf{e}^n_b$ and $\textbf{e}^n_c - \textbf{e}^n_d$ at $\frac{\pi}{2}$, by Theorem~\ref{Conditions for the existence of perfect pair state transfer} and the comment after Theorem~\ref{More conditions for the existence of perfect pair state transfer}, we find that $\frac{\pi}{2}$ is a rational multiple of $\frac{\pi}{g \sqrt{\Delta}}$, where $g$ is as in Theorem~\ref{Conditions for the existence of perfect pair state transfer}. This implies that $\Delta=1$, which is a contradiction. Therefore, Theorem~\ref{Conditions for the existence of perfect pair state transfer} yields that  the eigenvalues in $\Phi_{ab}$ are integers. 

Note that $\Delta_j = \sqrt{(r + 2)^2 - 4 \theta_j}$ for $0 \leq j \leq p$. Therefore $\Delta^2_j$ is an integer for all $\theta_j \in \Phi_{ab}$. For each $\theta_j \in \Phi_{ab}$, write $\Delta_j = x_j \sqrt{y_j}$ for some integer $x_j$ and square free integer $y_j$. Since $r+1$ is not a Laplacian eigenvalue of $G$, as in the proof of Lemma~\ref{non-existence of PST 1st Lemma} we find that $\Delta_j$ is irrational for all $\theta_j \in \Phi_{ab}$. This implies that $y_j > 1$ for all $\theta_j \in \Phi_{ab}$. Then Lemma~\ref{Linearly independent corollary} yields that the set $\{1\}\cup \{\sqrt{y_j} \colon \theta_j \in \Phi_{ab}\}$ is linearly independent over $\mathbb{Q}$. Now applying Kronecker's approximation theorem, for each $\theta_j \in \Phi_{ab}$, there exists $\ell, s_j \in \mathbb{Z}$  such that
$$\ell \sqrt{y_j} - s_j \approx -\frac{\sqrt{y_j}}{8}~~\mathrm{for~all}~\theta_j \in  \Phi_{ab}.$$
This implies that
$$\left(4 \ell + \frac{1}{2}\right)\Delta_j \approx 4x_j s_j~~\mathrm{for~all}~\theta_j \in  \Phi_{ab}.$$
Let $t=\left(4\ell+\frac{1}{2}\right)\pi$. Since the cosine and sine functions are uniformly continuous on $\mathbb{R}$, we have 
$$\cos\left(\frac{\Delta_j t}{2}\right) \approx 1~~\mathrm{and}~~\sin\left(\frac{\Delta_j t}{2}\right) \approx 0~~\mathrm{for}~0 \leq j \leq p~\mathrm{with}~\theta_j \in  \Phi_{ab}.$$   
From part $(a)$ of Lemma~\ref{Total graph entry pair state}, we obtain 
\begin{align*}
&\frac{1}{2}{\left(\textbf{e}^{n+m}_a - \textbf{e}^{n+m}_b\right)}^TU_{\mathcal{T}(G)}(t)\left(\textbf{e}^{n+m}_c - \textbf{e}^{n+m}_d\right) \\
=& \frac{1}{2}\sum_{\theta_j \in \Phi_{ab}}\exp(-\textbf{i}t((r + 2\theta_j + 2)/2)){\left(\textbf{e}^n_{a} - \textbf{e}^n_{b}\right)}^TE_{\theta_j}(G)\left(\textbf{e}^n_{c} - \textbf{e}^n_{d}\right)\\
&\times \left(\cos\left(\frac{\Delta_j t}{2}\right) + \textbf{i}\frac{2 - r}{\Delta_j}\sin\left(\frac{\Delta_j t}{2}\right)\right).
\end{align*}
Since $\frac{r+2}{4}$ and $\theta_j \in  \Phi_{ab}$ are integers, and $t=\left(4\ell+\frac{1}{2}\right)\pi$, we find that 
$$\exp(-\textbf{i}t((r + 2\theta_j + 2)/2))= {(-1)}^{\frac{r+2}{4}}\exp(-\textbf{i}(\pi/2)\theta_j.$$
Therefore  
\begin{align*}
&\frac{1}{2}{\left(\textbf{e}^{n+m}_a - \textbf{e}^{n+m}_b\right)}^TU_{\mathcal{T}(G)}(t)\left(\textbf{e}^{n+m}_c - \textbf{e}^{n+m}_d\right) \\
\approx & {(-1)}^{\frac{r+2}{4}} \frac{1}{2}\sum_{\theta_j \in  \Phi_{ab}}\exp(-\textbf{i}(\pi/2)\theta_j){\left(\textbf{e}^n_{a} - \textbf{e}^n_{b}\right)}^TE_{\theta_j}(G)\left(\textbf{e}^n_{c} - \textbf{e}^n_{d}\right) \\
 =& {(-1)}^{\frac{r+2}{4}} \frac{1}{2} \sum_{j=0}^{p}\exp(-\textbf{i}(\pi/2)\theta_j){\left(\textbf{e}^n_{a} - \textbf{e}^n_{b}\right)}^TE_{\theta_j}(G)\left(\textbf{e}^n_{c} - \textbf{e}^n_{d}\right)\\
 =& {(-1)}^{\frac{r+2}{4}} \frac{1}{2}{\left(\textbf{e}^n_{a} - \textbf{e}^n_{b}\right)}^T \left(\sum_{j=0}^{p}\exp(-\textbf{i}(\pi/2)\theta_j)E_{\theta_j}(G)\right)\left(\textbf{e}^n_{c} - \textbf{e}^n_{d}\right)\\
=& {(-1)}^{\frac{r+2}{4}} \frac{1}{2}{\left(\textbf{e}^n_{a} - \textbf{e}^n_{b}\right)}^T U_G(\pi/2)\left(\textbf{e}^n_{c} - \textbf{e}^n_{d}\right).
\end{align*}
Since $G$ exhibits Pair-LPST between ${\textbf{e}^n_{a}} - {\textbf{e}^n_{b}}$ and ${\textbf{e}^n_{c}} - {\textbf{e}^n_{d}}$ at $\frac{\pi}{2}$, we have 
$$\left|\frac{1}{2}{({\textbf{e}^n_{a}} - {\textbf{e}^n_{b}})}^TU_G(\pi/2)({\textbf{e}^n_{c}} - {\textbf{e}^n_{d}})\right| = 1.$$
Thus it follows that  
$$\left|\frac{1}{2}{\left(\textbf{e}^{n+m}_a - \textbf{e}^{n+m}_b\right)}^TU_{\mathcal{T}(G)}(t)\left(\textbf{e}^{n+m}_c - \textbf{e}^{n+m}_d\right)\right| \approx 1.$$
Hence, $\mathcal{T}(G)$ exhibits Pair-LPGST between the pair states $\textbf{e}^{n+m}_a - \textbf{e}^{n+m}_b$ and $\textbf{e}^{n+m}_c - \textbf{e}^{n+m}_d$.
\end{proof}

Let $G$ be an $r$-regular connected bipartite graph, where $r \geq 3$ and $X_1 \cup X_2$ be a bipartition of the vertex set of $G$. Then $2r$ is a Laplacian eigenvalue of $G$ and the Laplacian eigenprojector of $G$ corresponding to $2r$ is given by
\begin{equation}\label{f_1}
E_{2r}(G) = \frac{1}{n}\begin{pmatrix}
         J_{|X_1|\times |X_1|}  &  -J_{|X_1|\times |X_2|}\\
         -J_{|X_2|\times |X_1|} &  J_{|X_2|\times |X_2|}
         \end{pmatrix}.
\end{equation}
The next theorem provides sufficient conditions for the existence of Pair-LPGST on the total graph of a regular connected bipartite graph.
\begin{theorem}\label{Second theorem PGST}
Let $G$ be an $r$-regular connected bipartite graph such that $r\geq3$. Let $X_1 \cup X_2$ be a bipartition of the vertex set of $G$. Also, let $G$ exhibit Pair-LPST between $\textbf{e}^n_a - \textbf{e}^n_b$ and $\textbf{e}^n_c - \textbf{e}^n_d$ at $\frac{\pi}{2}$ and $r+1$ not be a Laplacian eigenvalue of $G$.   
\begin{enumerate}
\item[(a)] Suppose that $a, b \in X_1$ or $a, b \in X_2$. If $\frac{r+2}{4}$ is an integer, then $\mathcal{T}(G)$ exhibits Pair-LPGST between $\textbf{e}^{n+m}_a - \textbf{e}^{n+m}_b$ and $\textbf{e}^{n+m}_c - \textbf{e}^{n+m}_d$.
\item[(b)] Suppose that $a \in X_1,~b \in X_2$ or $b \in X_1,~a \in X_2$. If $\frac{3r}{2}$ and $\frac{5r+2}{4}$ are integers of the same parity, then $\mathcal{T}(G)$ exhibits Pair-LPGST between $\textbf{e}^{n+m}_a - \textbf{e}^{n+m}_b$ and $\textbf{e}^{n+m}_c - \textbf{e}^{n+m}_d$. 
\end{enumerate}
\end{theorem}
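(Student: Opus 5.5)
The plan is to follow the proof of Theorem~\ref{First theorem PGST}, using part~(b) of Lemma~\ref{Total graph entry pair state} in place of part~(a). The aim is to choose $t=(4\ell+\frac12)\pi$ so that the total-graph amplitude is within $\varepsilon$ of a unimodular scalar multiple of $\frac12(\textbf{e}^n_a-\textbf{e}^n_b)^TU_G(\pi/2)(\textbf{e}^n_c-\textbf{e}^n_d)$. That quantity has modulus $1$ by hypothesis.

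\emph{Step 1: the support is integral.} Let $\Phi_{ab}$ be the Laplacian eigenvalue support of $\textbf{e}^n_a-\textbf{e}^n_b$ in $G$. Suppose $\Phi_{ab}$ were of quadratic type with $\Delta>1$. By Theorem~\ref{Conditions for the existence of perfect pair state transfer} and the remark after Theorem~\ref{More conditions for the existence of perfect pair state transfer}, $\frac{\pi}{2}$ would be a rational multiple of $\frac{\pi}{g\sqrt{\Delta}}$, which is impossible. Hence every $\theta_j\in\Phi_{ab}$ is an integer, and $0\notin\Phi_{ab}$. For $\theta_j\in\Phi_{ab}$ with $1\le j\le p-1$ we have $0<\theta_j<2r$, so $(r+2)^2-4\theta_j>(r-2)^2$. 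The perfect-square analysis in the proof of Lemma~\ref{non-existence of PST 1st Lemma}, together with $r+1\notin\{\theta_j\}$, then shows that $\Delta_j$ cannot be an integer. So $\Delta_j=x_j\sqrt{y_j}$ with $y_j>1$ square-free.

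\emph{Step 2: the approximation.} By Lemma~\ref{Linearly independent corollary}, the set $\{1\}\cup\{\sqrt{y_j}\}$ is linearly independent over $\mathbb{Q}$. Kronecker's theorem (Theorem~\ref{Kronecker approximation theorem}) then gives integers $\ell,s_j$ with $\ell\sqrt{y_j}-s_j\approx-\sqrt{y_j}/8$. With $t=(4\ell+\frac12)\pi$ this yields $\Delta_jt/2\approx 2\pi x_js_j$, so $\cos(\Delta_jt/2)\approx1$ and $\sin(\Delta_jt/2)\approx0$ for all $j\le p-1$ with $\theta_j\in\Phi_{ab}$. The terms with $\theta_j\notin\Phi_{ab}$ vanish in Lemma~\ref{Total graph entry pair state}(b). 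Since $(r+2)/4$ and the $\theta_j$ are integers, the phases become
$$\exp(-\textbf{i}t(r+2+2\theta_j)/2)=(-1)^{\frac{r+2}{4}}\exp(-\textbf{i}(\pi/2)\theta_j).$$

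\emph{Step 3: the $3r$ term.} This is the only new point compared with Theorem~\ref{First theorem PGST}.

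In case (a), $a,b$ lie in the same part of the bipartition, so Equation~(\ref{f_1}) gives $E_{2r}(G)(\textbf{e}^n_a-\textbf{e}^n_b)=\textbf{0}_n$. The extra term vanishes and $2r\notin\Phi_{ab}$. The argument of Theorem~\ref{First theorem PGST} then gives amplitude $\approx(-1)^{\frac{r+2}{4}}\frac12(\textbf{e}^n_a-\textbf{e}^n_b)^TU_G(\pi/2)(\textbf{e}^n_c-\textbf{e}^n_d)$.

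In case (b), $3r/2\in\mathbb{Z}$ and $(5r+2)/4\in\mathbb{Z}$ force $r\equiv2\pmod 4$. Hence $(r+2)/4$ is an integer, and Step~2 applies verbatim. The extra term is $\exp(-3\textbf{i}rt)=\exp(-\textbf{i}\,3r\pi/2)=(-1)^{3r/2}$, because $12r\ell\pi\in2\pi\mathbb{Z}$. For the eigenvalue $2r$, $U_G(\pi/2)$ contributes the phase $\exp(-\textbf{i}\pi r)=1$, so we need $(-1)^{3r/2}=(-1)^{\frac{r+2}{4}}$. Since $\frac{5r+2}{4}-\frac{r+2}{4}=r$ is even, the parity hypothesis is exactly this equality. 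Thus every term carries the common factor $(-1)^{\frac{r+2}{4}}$.

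Summing, the total-graph amplitude is $\approx(-1)^{\frac{r+2}{4}}\frac12(\textbf{e}^n_a-\textbf{e}^n_b)^TU_G(\pi/2)(\textbf{e}^n_c-\textbf{e}^n_d)$. Its modulus is $\approx1$ by Pair-LPST in $G$, which proves Pair-LPGST. The main obstacle is this sign bookkeeping for the $3r$ eigenvalue in case~(b), and verifying that the stated parity condition is precisely what aligns its phase with the others.
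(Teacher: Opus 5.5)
Your proposal is correct and follows essentially the same route as the paper. Both use the same choice $t=(4\ell+\frac12)\pi$ obtained from Kronecker's theorem, and both note that the $3r$ term vanishes in case (a). In case (b), both use the parity hypothesis to match $\exp(-3\textbf{i}rt)$ with the phase carried by the eigenvalue $2r$. The paper phrases this as $\exp(-3\textbf{i}tr)=\exp(-\textbf{i}t(r+2\theta_p+2)/2)$ with $\frac{r+2}{4}$ odd; your version via $\frac{5r+2}{4}-\frac{r+2}{4}=r$ is an equivalent and slightly more explicit form of the same bookkeeping.
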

\begin{proof}
Following the notations and the proof of Theorem~\ref{First theorem PGST}, we obtain that
$$\cos\left(\frac{\Delta_j t}{2}\right) \approx 1~~\mathrm{and}~~\sin\left(\frac{\Delta_j t}{2}\right) \approx 0~~\mathrm{for}~0 \leq j \leq p-1~\mathrm{with}~\theta_j \in \Phi_{ab},~~\mathrm{where}~t=\left(4\ell+\frac{1}{2}\right)\pi.$$   

\noindent $(a)$ If $a, b \in X_1$ or $a, b \in X_2$, then Equation~(\ref{f_1}) implies that $2r$ does not belong to $\Phi_{ab}$. Since $G$ exhibits Pair-LPST between ${\textbf{e}^n_{a}} - {\textbf{e}^n_{b}}$ and ${\textbf{e}^n_{c}} - {\textbf{e}^n_{d}}$ at $\frac{\pi}{2}$, we have 
$$\left|\frac{1}{2}{({\textbf{e}^n_{a}} - {\textbf{e}^n_{b}})}^TU_G(\pi/2)({\textbf{e}^n_{c}} - {\textbf{e}^n_{d}})\right| = 1.$$
Now from part $(b)$ of Lemma~\ref{Total graph entry pair state} and proceeding as in the proof of Theorem~\ref{First theorem PGST}, we have 
$$\left|\frac{1}{2}{\left(\textbf{e}^{n+m}_a - \textbf{e}^{n+m}_b\right)}^TU_{\mathcal{T}(G)}(t)\left(\textbf{e}^{n+m}_c - \textbf{e}^{n+m}_d\right) \right| \approx \left| \frac{1}{2}{\left(\textbf{e}^n_{a} - \textbf{e}^n_{b}\right)}^T U_G(\pi/2)\left(\textbf{e}^n_{c} - \textbf{e}^n_{d}\right) \right|=1.$$
Hence, $\mathcal{T}(G)$ exhibits Pair-LPGST between the pair states $\textbf{e}^{n+m}_a - \textbf{e}^{n+m}_b$ and $\textbf{e}^{n+m}_c - \textbf{e}^{n+m}_d$.

\noindent $(b)$ If $a \in X_1,~b \in X_2$ or $b \in X_1,~a \in X_2$, then Equation~(\ref{f_1}) implies that $2r$ belongs to $\Phi_{ab}$. Now 
\begin{align*}
&\frac{1}{2}{\left(\textbf{e}^{n+m}_a - \textbf{e}^{n+m}_b\right)}^TU_{\mathcal{T}(G)}(t)\left(\textbf{e}^{n+m}_c - \textbf{e}^{n+m}_d\right)\\
=&\frac{1}{2} \sum_{\theta_j \in \Phi_{ab} \setminus \{2r\}}
\exp(-\textbf{i}t((r+2\theta_j+2)/2))
{\left(\textbf{e}^n_{a} - \textbf{e}^n_{b}\right)}^TE_{\theta_j}(G)\left(\textbf{e}^n_{c} - \textbf{e}^n_{d}\right)\\
&~\times\left(\cos\left(\frac{\Delta_j t}{2}\right) + \textbf{i}\frac{2-r}{\Delta_j}\sin\left(\frac{\Delta_j t}{2}\right)\right) + \frac{1}{2} \exp(-3\textbf{i}tr){\left(\textbf{e}^n_{a} - \textbf{e}^n_{b}\right)}^TE_{2r}(G)\left(\textbf{e}^n_{c} - \textbf{e}^n_{d}\right).
\end{align*}
Since $\frac{3r}{2}$ and $\frac{5r+2}{4}$ are integers of the same parity, $\theta_j \in  \Phi_{ab}$ are integers, $t=\left(4\ell+\frac{1}{2}\right)\pi$ and $\theta_p=2r$, we find that 
$$\exp(-3\textbf{i}tr)=\exp(-\textbf{i}t((r+2\theta_p+2)/2)).$$
Further, $\frac{r+2}{4}$ is an odd integer. Therefore 
\begin{align*}
&\frac{1}{2}{\left(\textbf{e}^{n+m}_a - \textbf{e}^{n+m}_b\right)}^TU_{\mathcal{T}(G)}(t)\left(\textbf{e}^{n+m}_c - \textbf{e}^{n+m}_d\right)\\
\approx & \frac{1}{2} \sum_{\theta_j \in \Phi_{ab}}
\exp(-\textbf{i}t((r+2\theta_j+2)/2))
{\left(\textbf{e}^n_{a} - \textbf{e}^n_{b}\right)}^TE_{\theta_j}(G)\left(\textbf{e}^n_{c} - \textbf{e}^n_{d}\right)\\
=&-\frac{1}{2}{\left(\textbf{e}^n_{a} - \textbf{e}^n_{b}\right)}^TU_G(\pi /2)     {\left(\textbf{e}^n_{c} - \textbf{e}^n_{d}\right)}.
\end{align*}
Since $G$ exhibits Pair-LPST between ${\textbf{e}^n_{a}} - {\textbf{e}^n_{b}}$ and ${\textbf{e}^n_{c}} - {\textbf{e}^n_{d}}$ at $\frac{\pi}{2}$, we have 
$$\left|\frac{1}{2}{({\textbf{e}^n_{a}} - {\textbf{e}^n_{b}})}^TU_G(\pi/2)({\textbf{e}^n_{c}} - {\textbf{e}^n_{d}})\right| = 1.$$
Thus 
$$\left|\frac{1}{2}{\left(\textbf{e}^{n+m}_a - \textbf{e}^{n+m}_b\right)}^TU_{\mathcal{T}(G)}(t)\left(\textbf{e}^{n+m}_c - \textbf{e}^{n+m}_d\right)\right| \approx 1.$$
Hence, $\mathcal{T}(G)$ exhibits Pair-LPGST between the pair states $\textbf{e}^{n+m}_a - \textbf{e}^{n+m}_b$ and $\textbf{e}^{n+m}_c - \textbf{e}^{n+m}_d$.
\end{proof}

\section{Examples}\label{Concrete examples}
In this section, we obtain several infinite families of total graphs exhibiting Pair-LPGST using Theorem~\ref{First theorem PGST} and Theorem~\ref{Second theorem PGST}. We also find that the total graph of the complete graph $K_n$ exhibits Pair-LPGST if and only if $n=3$.
\begin{ex}\label{Example 1}
Let $m$ be an even positive integer. Then the total graph of the cocktail party graph $\overline{\cup_m K_2}$ exhibits Pair-LPGST. 
\end{ex}
\begin{proof}
Observe that $\overline{\cup_m K_2}=\mathrm{Cay}(\mathbb{Z}_{2m}, \mathbb{Z}_{2m} \setminus \{0,m\})$. The graph $\overline{\cup_m K_2}$ is an $r$-regular non-bipartite graph, where $r=2m-2$, and the distinct Laplacian eigenvalues are $0,2m-2,2m$. From Theorem~\ref{Distance regular graphs}, it follows that $\overline{\cup_m K_2}$ exhibits LPST at $\frac{\pi}{2}$. Therefore by Theorem~\ref{PST implies Pair PST}, $\overline{\cup_m K_2}$ exhibits Pair-LPST at $\frac{\pi}{2}$. Also, $r+1$ is not a Laplacian eigenvalue of $\overline{\cup_m K_2}$ and $\frac{r+2}{4}$ is an integer. Now Theorem~\ref{First theorem PGST} implies that the total graph of $\overline{\cup_m K_2}$ exhibits Pair-LPGST. 
\end{proof}

\begin{ex}\label{Example 2}
Let $n=4k$, where $k$ is an odd integer. Then the total graph of the Cayley graph $\mathrm{Cay}(D_{2n}, S)$ exhibits Pair-LPGST, where $S = \{u^k,u^{3k}\} \cup v \langle u \rangle$.
\end{ex}
\begin{proof}
Clearly, $\mathrm{Cay}(D_{2n}, S)$ is an $r$-regular non-bipartite graph, where $r=4k+2$. From Cao and Feng~\cite{CaoDihedral2021}, we find that 
the distinct Laplacian eigenvalues of $\mathrm{Cay}(D_{2n}, S)$ are $0,4k,4k+2,4k+4,8k$, and that it exhibits LPST at $\frac{\pi}{2}$. Therefore by Theorem~\ref{PST implies Pair PST}, $\mathrm{Cay}(D_{2n}, S)$ exhibits Pair-LPST at $\frac{\pi}{2}$. Also, $r+1$ is not a Laplacian eigenvalue of $\mathrm{Cay}(D_{2n}, S)$ and $\frac{r+2}{4}$ is an integer. Now Theorem~\ref{First theorem PGST} implies that the total graph of $\mathrm{Cay}(D_{2n}, S)$ exhibits Pair-LPGST. 
\end{proof}

\begin{ex}\label{Example 3}
Let $n=2p$, where $p$ is an odd prime. Then the total graph of $\mathrm{Cay}(D_{2n}, S)$ exhibits Pair-LPGST, where $S =  \left(\langle u \rangle \setminus \{\textbf{1},u^p\}\right)\cup v \langle u \rangle$.
\end{ex}
\begin{proof}
Clearly, $\mathrm{Cay}(D_{2n}, S)$ is an $r$-regular non-bipartite graph, where $r=4p-2$. From Cao and Feng~\cite{CaoDihedral2021}, we find that the distinct Laplacian eigenvalues of $\mathrm{Cay}(D_{2n}, S)$ are $0,4p-2,4p$, and that it exhibits LPST at $\frac{\pi}{2}$. Therefore by Theorem~\ref{PST implies Pair PST}, $\mathrm{Cay}(D_{2n}, S)$ exhibits Pair-LPST at $\frac{\pi}{2}$. Also, $r+1$ is not a Laplacian eigenvalue of $\mathrm{Cay}(D_{2n}, S)$ and $\frac{r+2}{4}$ is an integer. Therefore, Theorem~\ref{First theorem PGST} implies that the total graph of $\mathrm{Cay}(D_{2n}, S)$ exhibits Pair-LPGST. 
\end{proof}

\begin{ex}\label{Example 4}
Let $n$ be an integer such that $n \geq 2$. Then the total graph of $\mathrm{Cay}(T_{4n}, S)$ exhibits Pair-LPGST, where $S = T_{4n} \setminus \{\textbf{1},u^n\}$. 
\end{ex}
\begin{proof}
Clearly, $\mathrm{Cay}(T_{4n}, S)$ is an $r$-regular non-bipartite graph, where $r=4n-2$. From Arezoomand et al.~\cite{ArezoomandDicyclic2022}, we find that the distinct Laplacian eigenvalues of $\mathrm{Cay}(T_{4n}, S)$ are $0,4n-2,4n$, and that it exhibits LPST at $\frac{\pi}{2}$.  Therefore by Theorem~\ref{PST implies Pair PST}, $\mathrm{Cay}(T_{4n}, S)$ exhibits Pair-LPST at $\frac{\pi}{2}$.
Also, $r+1$ is not a Laplacian eigenvalue of $\mathrm{Cay}(T_{4n}, S)$ and $\frac{r+2}{4}$ is an integer. Therefore, Theorem~\ref{First theorem PGST} implies that the total graph of $\mathrm{Cay}(T_{4n}, S)$ exhibits Pair-LPGST. 
\end{proof}

\begin{ex}\label{Example 5}
Let $n$ be an odd integer such that $n \geq 3$. Then the total graph of $\mathrm{Cay}(SD_{8n}, S)$ exhibits Pair-LPGST, where $S = \{u^n,u^{3n}\} \cup v \langle u \rangle$. 
\end{ex}
\begin{proof}
Clearly, $\mathrm{Cay}(SD_{8n}, S)$ is an $r$-regular non-bipartite graph, where $r=4n+2$. From Luo et al.~\cite{LuoSemidihedral2022}, we find that the distinct Laplacian eigenvalues of $\mathrm{Cay}(SD_{8n}, S)$ are $0,4n,4n+2,4n+4,8n$, and that it exhibits LPST at $\frac{\pi}{2}$.  Therefore by Theorem~\ref{PST implies Pair PST}, $\mathrm{Cay}(SD_{8n}, S)$ exhibits Pair-LPST at $\frac{\pi}{2}$. Also, $r+1$ is not a Laplacian eigenvalue of $\mathrm{Cay}(SD_{8n}, S)$ and $\frac{r+2}{4}$ is an integer. Therefore, Theorem~\ref{First theorem PGST} implies that the total graph of $\mathrm{Cay}(SD_{8n}, S)$ exhibits Pair-LPGST. 
\end{proof}

\begin{ex}\label{Example 6}
Let $r=8m_1+2$, where $m_1$ is a positive integer. Then the total graph of the $r$-cube $Q_r$ exhibits Pair-LPGST. 
\end{ex}
\begin{proof}
Clearly, $Q_r$ is an $r$-regular bipartite graph. Since $r$ is even, $r+1$ is not a Laplacian eigenvalue of $Q_r$. It follows from Christandl et al.~\cite{Christandl2004} and Theorem~\ref{PST implies Pair PST} that $Q_r$ exhibits Pair-LPST at $\frac{\pi}{2}$. Further, $\frac{r+2}{4}=2m_1+1, \frac{3r}{2}=12m_1+3$ and $\frac{5r+2}{4}=10m_1+3$. Therefore by Theorem~\ref{Second theorem PGST}, the total graph of $Q_r$ exhibits Pair-LPGST.
\end{proof}

\begin{ex}\label{Example 7}
Let $m_1$ and $m_2$ be two positive integers. Also, let $G_1=\overline{\cup_{4m_1+2} K_2}$ and $G_2=Q_{8m_2}$. Then the total graph of $G_1 \square G_2$ exhibits Pair-LPGST.
\end{ex}
\begin{proof}
Clearly, $G_1 \square G_2$ is an $r$-regular non-bipartite graph, where $r=8(m_1+m_2)+2$. Since $G_1$ and $G_2$ are Cayley graphs, Theorem~\ref{Distance regular graphs}, Christandl et al.~\cite{Christandl2004}, Theorem~\ref{Cayley Graph Cartesian Product} and Theorem~\ref{PST and Cartesian product} altogether imply that $G_1 \square G_2$ exhibits Pair-LPST at $\frac{\pi}{2}$. Also, $r+1$ is not a Laplacian eigenvalue of $G_1 \square G_2$ and $\frac{r+2}{4}$ is an integer. Therefore by Theorem~\ref{First theorem PGST}, the total graph of $G_1 \square G_2$ exhibits Pair-LPGST.
\end{proof}
In Table 1, we list a few more total graphs exhibiting Pair-LPGST using Cartesian product.

\begin{ex}\label{Example 8}
Let $m_1$ be a positive integer and $m_2$ be an even integer such that $m_2 \geq 4$. Also, let $G_1=\overline{\cup_{4m_1+2} K_2}$ and $G_2=K_{m_2}$. Then the total graph of $G_1 \times G_2$ exhibits Pair-LPGST.
\end{ex}
\begin{proof}
Clearly, $G_1 \times G_2$ is an $r$-regular non-bipartite graph, where $r= 8m_1(m_2-1)+2m_2-2$. Since $G_1$ exhibits PST at $\frac{\pi}{2}$, the eigenvalues of $G_1$ are even and $G_2$ is a circulant graph with odd eigenvalues, Theorem~\ref{Cayley Graph Tensor Product} and Theorem~\ref{PST and tensor product} together imply that $G_1 \times G_2$ exhibits Pair-LPST at $\frac{\pi}{2}$.  Also, $r+1$ is not a Laplacian eigenvalue of $G_1 \times G_2$ and $\frac{r+2}{4}$ is an integer. Therefore by Theorem~\ref{First theorem PGST}, the total graph of $G_1 \times G_2$ exhibits Pair-LPGST.
\end{proof}

\begin{ex}\label{Example 9}
Let $m_1$ be an odd positive integer and $m_2$ be a positive integer such that $m_2 \equiv 0~(\mathrm{mod}~4)$. Also, let $G_1=Q_{4m_1+2}$ and $G_2= K_{m_2}$. Then the total graph of $G_1 \times G_2$ exhibits Pair-LPGST. 
\end{ex}
\begin{proof}
Clearly, $G_1 \times G_2$ is an $r$-regular bipartite graph, where $r=4m_1(m_2-1)+2m_2-2$ and $r+1$ is not a Laplacian eigenvalue of $G_1 \times G_2$. Also, $G_1 \times G_2$ exhibits Pair-LPST at $\frac{\pi}{2}$. Since $\frac{r+2}{4},~\frac{3r}{2}$ and $\frac{5r+2}{4}$ are odd integers, Theorem~\ref{Second theorem PGST} yields that the total graph of $G_1 \times G_2$ exhibits Pair-LPGST.
\end{proof}
In Table 2, we list a few more total graphs exhibiting Pair-LPGST using tensor product. It is worth mentioning that the total graphs appeared in Example~\ref{Example 1} to Example~\ref{Example 9}, Table 1 and Table 2 provide several infinite families of total graphs of Laplacian integral graphs exhibiting Pair-LPGST that fail to exhibit Pair-LPST.  

\begin{table}\label{First Table}
\caption{Total graphs of Cartesian product of graphs exhibiting Pair-LPGST}
\centering
\begin{tabular}{|c| c| c|}
\hline
$G_1$ & $G_2$ & Total graph of $G_1 \square G_2$\\
\hline
\makecell{The Cayley graph in Example~\ref{Example 2}} & \makecell{$Q_{4m_1+4}$, where $m_1 \in \mathbb{N} \cup \{0\}$} & Exhibits Pair-LPGST\\
\hhline{|-~-|}
\makecell{The Cayley graph in Example~\ref{Example 3}} &  & Exhibits Pair-LPGST\\
\hhline{|-~-|}
\makecell{The Cayley graph in Example~\ref{Example 4}} &  & Exhibits Pair-LPGST\\
\hhline{|-~-|}
\makecell{The Cayley graph in Example~\ref{Example 5}} &  & Exhibits Pair-LPGST\\
\hline
\end{tabular}
\end{table}

\begin{table}\label{Second Table}
\caption{Total graphs of tensor product of graphs exhibiting Pair-LPGST}
\centering
\begin{tabular}{|c| c| c|}
\hline
$G_1$ & $G_2$ & Total graph of $G_1 \times G_2$\\
\hline
\makecell{The Cayley graph in Example~\ref{Example 2}} & \makecell{$K_{m_2}$,  where $m_2$ is even and $m_2 \geq 4$} & Exhibits Pair-LPGST\\
\hhline{|-~-|}
\makecell{The Cayley graph in Example~\ref{Example 3}} &  & Exhibits Pair-LPGST\\
\hhline{|-~-|}
\makecell{The Cayley graph in Example~\ref{Example 4}} &  & Exhibits Pair-LPGST\\
\hhline{|-~-|}
\makecell{The Cayley graph in Example~\ref{Example 5}} &  & Exhibits Pair-LPGST\\
\hline
\end{tabular}
\end{table}

Now we consider Pair-LPGST on total graph of the complete graph $K_n$. Since $\mathcal{T}(K_2)$ is the cycle on three vertices, Lemma 3 of Pal and Mohapatra~\cite{PalMohapatra2026} implies that the graph $\mathcal{T}(K_2)$ does not exhibit Pair-LPGST. Note that $\mathcal{T}(K_3)= \overline{\cup_3 K_2} = \mathrm{Cay}(\mathbb{Z}_{6}, \{1, 2, 4, 5\})$. It can be verified that the Laplacian eigenvalues of $\overline{\cup_3 K_2}$ are given by $\theta_0=0, \theta_1=4$ and $\theta_2=6$. Further, the Laplacian eigenprojectors of $\overline{\cup_3 K_2}$ are $\frac{1}{6} J_{6 \times 6}, E_{{\theta_1}}(\overline{\cup_3 K_2})$ and $E_{{\theta_2}}(\overline{\cup_3 K_2})$, where 
\begin{align*}
\resizebox{0.999\hsize}{!}{$
E_{{\theta_1}}(\overline{\cup_3 K_2}) = \frac{1}{2}\begin{pmatrix}
         1 & -1 & 0 & 0 & 0 & 0  \\
         -1 & 1 & 0 & 0 & 0 & 0  \\
         0 & 0 & 1 & -1 & 0 & 0  \\
         0 & 0 & -1 & 1 & 0 & 0  \\
         0 & 0 & 0 & 0 & 1 & -1 \\
         0 & 0 & 0 & 0 & -1 & 1  
         \end{pmatrix} ~\mathrm{and} ~E_{{\theta_2}}(\overline{\cup_3 K_2}) = \frac{1}{6}\begin{pmatrix}
         2 & 2 & -1 & -1 & -1 & -1 \\
         2 & 2 & -1 & -1 & -1 & -1 \\
         -1 & -1 & 2 & 2 & -1 & -1 \\
         -1 & -1 & 2 & 2 & -1 & -1 \\
         -1 & -1 & -1 & -1 & 2 & 2 \\
         -1 & -1 & -1 & -1 & 2 & 2 
         \end{pmatrix}.$}   
\end{align*}

Consider the vertices $a,b, c$ and $d$ of $\overline{\cup_3 K_2}$ such that $a=0, b=4, c=1$ and $d=5$. Then we obtain that the pair states $\textbf{e}^{6}_a - \textbf{e}^{6}_b$ and $\textbf{e}^{6}_c - \textbf{e}^{6}_d$ of the graph $\overline{\cup_3 K_2}$ are Laplacian strongly cospectral such that $\Phi^{+}_{ab,cd} = \{\theta_2\}$ and $\Phi^{-}_{ab,cd}=\{\theta_1\}$. Let $\ell, m \in \mathbb{Z}$ such that $\ell \theta_2 + m \theta_1 =0$ and  
 $\ell + m =0$. Then $m=0$. Therefore, Theorem~\ref{Paths} yields that $\overline{\cup_3 K_2}$ exhibits Pair-LPGST between $\textbf{e}^{6}_a - \textbf{e}^{6}_b$ and $\textbf{e}^{6}_c - \textbf{e}^{6}_d$.

Now we consider the case that $n \geq 4$. Note that $0$ and $n$ are the only Laplacian eigenvalues of $K_n$, that is, $\theta_0 = 0$ and $\theta_1 = n$. The Laplacian eigenprojectors of $K_n$ corresponding to $\theta_0$ and $\theta_1$ are given by 
$$E_{\theta_0}(K_n) = \frac{1}{n}J_{n \times n}~~\mathrm{and}~~E_{\theta_1}(K_n) = I_{n \times n} - \frac{1}{n}J_{n \times n},$$
respectively.
Let $\{z_1,\ldots,z_{m-n}\}$ be an orthonormal basis of the null-space of $R_{K_n}$, where $m=\frac{n^2-n}{2}$. Applying Theorem~\ref{Total graph spectra}, we obtain the Laplacian eigenvalues of $\mathcal{T}(K_n)$ as follows:
$$\theta_0^{+}=n+1,~\theta_0^{-}=0,~\theta_1^{+}=2n,~\theta_1^{-}=n+1 ~~\mathrm{and}~~2n.$$
Further, the Laplacian eigenprojectors of $\mathcal{T}(K_n)$ corresponding to $\theta_0^{+},\theta_0^{-},\theta_1^{+},\theta_1^{-}$ and $2n$ are 
\begin{align}
E_{\theta_0^{+}}(\mathcal{T}(K_n)) &= \frac{1}{n^2-1}\begin{pmatrix}
         \frac{n^2-2n+1}{n} J_{n\times n} &  \frac{-2n+2}{n} J_{n \times m}\\
          \frac{-2n+2}{n}J_{m \times n} &  \frac{4}{n} J_{m \times m}
       \end{pmatrix},\label{v1}\\
E_{\theta_0^{-}}{(\mathcal{T}(K_n))} &= \frac{2}{n^2+n}J_{(n+m) \times (n+m)},\label{v2}\\
E_{\theta_1^{+}}(\mathcal{T}(K_n)) &= \frac{1}{n^2 - 3n + 2}\begin{pmatrix}
         {(2-n)}^2{\left(I_{n \times n} - \frac{1}{n}J_{n \times n}\right)}  &  (2-n){\left(R_{K_n}-\frac{2}{n} J_{n \times m}\right)}\\
            (2-n){\left(R^T_{K_n}-\frac{2}{n} J_{m \times n}\right)} & R^T_{K_n} R_{K_n} - \frac{4}{n}J_{m \times m} 
       \end{pmatrix},\label{v3}\\
E_{\theta_1^{-}}(\mathcal{T}(K_n)) &= \frac{1}{n-1}\begin{pmatrix}
         I_{n \times n} - \frac{1}{n}J_{n \times n}  &  R_{K_n}-\frac{2}{n} J_{n \times m}\\
           R^T_{K_n}-\frac{2}{n} J_{m \times n} & R^T_{K_n} R_{K_n} - \frac{4}{n}J_{m \times m} 
       \end{pmatrix}~\mathrm{and}\label{v4}\\
E_{2n}{(\mathcal{T}(K_n))} &= \begin{pmatrix}
         \textbf{0}_{n \times n} & \textbf{0}_{n \times m}  \\
         \textbf{0}_{m \times n} & \sum_{i=1}^{m-n} z_iz^T_i  
       \end{pmatrix},\label{v5} 
\end{align}
respectively. In the following theorem, we prove that if a complete graph contains more than three vertices, then the corresponding total graph does not exhibit Pair-LPGST.
\begin{ex}\label{Total graphs of complete graphs}
Let $n$ be an integer with $n \geq 4$. Then the total graph of $K_n$ does not exhibit Pair-LPGST.
\end{ex}
\begin{proof}
Let $\mathcal{T}(K_n)$ exhibit Pair-LPGST between $\textbf{e}^{n+m}_a - \textbf{e}^{n+m}_b$ and $\textbf{e}^{n+m}_c - \textbf{e}^{n+m}_d$. Since $n \geq 4$, the graph $K_n$ is non-bipartite. Consider the following three cases.

\noindent \textbf{Case 1.} Let $a$ and $b$ be vertices of $K_n$. Then from equations (\ref{v1}), (\ref{v2}) and (\ref{v5}), we obtain 
$$E_{\theta_0^{+}}{(\mathcal{T}(K_n))}(\textbf{e}^{n+m}_a - \textbf{e}^{n+m}_b) = E_{\theta_0^{-}}{(\mathcal{T}(K_n))}(\textbf{e}^{n+m}_a - \textbf{e}^{n+m}_b) = E_{2n}{(\mathcal{T}(K_n))}(\textbf{e}^{n+m}_a - \textbf{e}^{n+m}_b)= \textbf{0}_{n+m}.$$
Since the pair state $\textbf{e}^{n+m}_a - \textbf{e}^{n+m}_b$ involves in Pair-LPGST, Theorem~\ref{fixed state} yields that $\Phi_{ab} = \{{\theta_1}^{+},{\theta_1}^{-}\}$. Then by Lemma~\ref{perfect state transfer lemma non-bipartite}, either ${\theta_1}^{+},{\theta_1}^{-} \in \Phi^{+}_{ab,cd}$ or ${\theta_1}^{+},{\theta_1}^{-} \in \Phi^{-}_{ab,cd}$, that is, either $\Phi^{-}_{ab,cd}$ is empty or $\Phi^{+}_{ab,cd}$ is empty, which contradicts Lemma~\ref{fixed state}.

\noindent \textbf{Case 2.} Let $a$ and $b$ be edges of $K_n$. Then from equations (\ref{v1}) and (\ref{v2}), we obtain 
$$E_{\theta_0^{+}}{(\mathcal{T}(K_n))}(\textbf{e}^{n+m}_a - \textbf{e}^{n+m}_b) = E_{\theta_0^{-}}{(\mathcal{T}(K_n))}(\textbf{e}^{n+m}_a - \textbf{e}^{n+m}_b) = \textbf{0}_{n+m}.$$
Therefore $\Phi_{ab} \subseteq \{{\theta_1}^{+},{\theta_1}^{-}, 2n\}$. By Lemma~\ref{perfect state transfer lemma non-bipartite}, we have either ${\theta_1}^{+},{\theta_1}^{-} \in \Phi_{ab}$ or ${\theta_1}^{+},{\theta_1}^{-} \notin \Phi_{ab}$. Also, by Lemma~\ref{fixed state}, $|\Phi_{ab}| \geq 2$. Therefore, $\Phi_{ab} = \{{\theta_1}^{+},{\theta_1}^{-}\}$ or $\Phi_{ab} = \{{\theta_1}^{+},{\theta_1}^{-},2n\}$. If $\Phi_{ab} = \{{\theta_1}^{+},{\theta_1}^{-}\}$, then we get a contradiction as in Case 1. If $\Phi_{ab} = \{{\theta_1}^{+},{\theta_1}^{-},2n\}$, then consider the following two sub-cases.

\noindent \textbf{Sub-case 1.} Let ${\theta_1}^{+},{\theta_1}^{-} \in \Phi^{+}_{ab,cd}$ and $2n \in \Phi^{-}_{ab,cd}$. Consider the integers $\ell_1,\ell_2$ and $m_1$ given by $\ell_1=1,\ell_2=0$ and $m_1=-1$. Then $\ell_1 {\theta_1}^{+} + \ell_2 {\theta_1}^{-} + m_1\cdot 2n = 0$ and $\ell_1 + \ell_2 + m_1=0$. But $m_1$ is odd, which contradicts Theorem~\ref{Paths}. 

\noindent \textbf{Sub-case 2.} Let ${\theta_1}^{+},{\theta_1}^{-} \in \Phi^{-}_{ab,cd}$ and $2n \in \Phi^{+}_{ab,cd}$. Consider the integers $\ell_1,m_1$ and $m_2$ given by $\ell_1=1,m_1=-1$ and $m_2=0$. Then $\ell_1 \cdot 2n + m_1 {\theta_1}^{+} + m_2 {\theta_1}^{-} = 0$ and $\ell_1 + m_1 + m_2=0$. But $m_1+m_2$ is odd, which contradicts Theorem~\ref{Paths}. 

\noindent \textbf{Case 3.} Let $a$ be a vertex and $b$ be an edge of $K_n$. Since $\mathcal{T}(K_n)$ exhibits Pair-LPGST between $\textbf{e}^{n+m}_a - \textbf{e}^{n+m}_b$ and $\textbf{e}^{n+m}_c - \textbf{e}^{n+m}_d$, Theorem~\ref{Conditions for the existence of perfect pair state transfer} implies that $\textbf{e}^{n+m}_a - \textbf{e}^{n+m}_b$ and $\textbf{e}^{n+m}_c - \textbf{e}^{n+m}_d$ are Laplacian strongly cospectral. Therefore
\begin{align}
&E_{{\theta_0}^{+}}(\mathcal{T}(K_n))(\textbf{e}^{n+m}_a - \textbf{e}^{n+m}_b) = \pm E_{{\theta_0}^{+}}(\mathcal{T}(K_n))(\textbf{e}^{n+m}_c - \textbf{e}^{n+m}_d)~\mathrm{and}~\label{t_1}\\
&E_{{\theta_1}^{+}}(\mathcal{T}(K_n))(\textbf{e}^{n+m}_a - \textbf{e}^{n+m}_b) = \pm E_{{\theta_1}^{+}}(\mathcal{T}(K_n))(\textbf{e}^{n+m}_c - \textbf{e}^{n+m}_d)\label{t_2}.
\end{align}
From equation (\ref{v1}) and (\ref{t_1}), it follows that $c$ is a vertex and $d$ is an edge of $K_n$; or $d$ is a vertex and $c$ is an edge of $K_n$. It is enough to consider that $c$ is a vertex and $d$ is an edge of $K_n$. Now from Equation~(\ref{v3}), we have
\begin{align*}
&E_{{\theta_1}^{+}}(\mathcal{T}(K_n))(\textbf{e}^{n+m}_a - \textbf{e}^{n+m}_b) = \frac{1}{n^2-3n+2}\begin{pmatrix}
         (2-n)^2\textbf{e}^n_a-(2-n)R_{K_n}\textbf{e}^m_b + (2-n)\textbf{1}_n \\
         (2-n)R^T_{K_n}\textbf{e}^n_a - R^T_{K_n} R_{K_n}\textbf{e}^m_b +\textbf{1}_m  
       \end{pmatrix}~\mathrm{and}~\\
&E_{{\theta_1}^{+}}(\mathcal{T}(K_n))(\textbf{e}^{n+m}_c - \textbf{e}^{n+m}_d) = \frac{1}{n^2-3n+2}\begin{pmatrix}
         (2-n)^2\textbf{e}^n_c-(2-n)R_{K_n}\textbf{e}^m_d + (2-n)\textbf{1}_n \\
         (2-n)R^T_{K_n}\textbf{e}^n_c - R^T_{K_n} R_{K_n}\textbf{e}^m_d + \textbf{1}_m  
       \end{pmatrix}.
\end{align*}
If $E_{{\theta_1}^{+}}(\mathcal{T}(K_n))(\textbf{e}^{n+m}_a - \textbf{e}^{n+m}_b) = E_{{\theta_1}^{+}}(\mathcal{T}(K_n))(\textbf{e}^{n+m}_c - \textbf{e}^{n+m}_d)$, then $\textbf{e}^n_a - \textbf{e}^n_c = \frac{1}{2-n}R_{K_n}(\textbf{e}^m_b - \textbf{e}^m_d)$, which not possible. If  
$E_{{\theta_1}^{+}}(\mathcal{T}(K_n))(\textbf{e}^{n+m}_a - \textbf{e}^{n+m}_b) = -E_{{\theta_1}^{+}}(\mathcal{T}(K_n))(\textbf{e}^{n+m}_c - \textbf{e}^{n+m}_d)$, then we obtain that 
$R_{K_n}(\textbf{e}^m_b + \textbf{e}^m_d) = (2-n)(\textbf{e}^n_a + \textbf{e}^n_c)+2\cdot\textbf{1}_n$, which is again not possible. Therefore, the graph $\mathcal{T}(K_n)$ does not exhibit Pair-LPGST. This completes the proof.
\end{proof} 

\subsection*{Acknowledgements} The first author expresses gratitude for the support received through the Prime Minister’s Research Fellowship (PMRF), PMRF-ID: 1903283, awarded by the Government of India.


\end{document}